\newaliascnt{thm}{equation}
\newtheorem{thm}[equation]{Theorem}
\newtheorem{thm-intro}[thm]{Theorem}
\newtheorem*{thm*}{Theorem}
\newtheorem{pro}[thm]{Proposition}
\newtheorem{lem}[thm]{Lemma}
\newtheorem{cor}[thm]{Corollary}
\newtheorem*{cor*}{Corollary}
\theoremstyle{definition}
\newtheorem{dfn}[thm]{Definition}
\newtheorem{rmk}[thm]{Remark}
\numberwithin{equation}{section}
\DeclareMathOperator{\coimg}{coim}
\DeclareMathOperator{\coker}{coker}
\DeclareMathOperator{\colim}{colim}
\DeclareMathOperator{\cone}{cone}
\DeclareMathOperator{\dL}{L\!}
\DeclareMathOperator{\dtm}{DTM}
\DeclareMathOperator{\fmd}{proj_{\scriptscriptstyle\mathrm{fil}}}
\DeclareMathOperator{\FMd}{Mod_{\scriptscriptstyle\mathrm{fil}}}
\DeclareMathOperator{\FP}{Proj_{\scriptscriptstyle\mathrm{fil}}}
\DeclareMathOperator{\fper}{{\mathcal D}^{\scriptscriptstyle\mathrm{perf}}_{\scriptscriptstyle\mathrm{fil}}}
\newcommand{\fperalt}{\fper(R)}
\DeclareMathOperator{\GMd}{Mod_{\scriptscriptstyle\mathrm{gr}}}
\DeclareMathOperator{\ihom}{\underline{hom}}
\DeclareMathOperator{\img}{im}
\DeclareMathOperator{\Md}{Mod}
\DeclareMathOperator{\md}{proj}
\DeclareMathOperator{\per}{{\mathcal D}^{\scriptscriptstyle\mathrm{perf}}}
\DeclareMathOperator{\seq}{\Z^{\op}\!}
\DeclareMathOperator{\spc}{Spc}
\DeclareMathOperator{\spec}{Spec}
\DeclareMathOperator{\spch}{Spc^{\mathrm{h}}}
\DeclareMathOperator{\spech}{Spec^{\mathrm{h}}}
\DeclareMathOperator{\supp}{supp}
\providecommand{\A}{\ensuremath{\mathcal A}}
\providecommand{\Cat}{\ensuremath{{\mathcal Cat}}}
\providecommand{\Catp}{\ensuremath{2\text{-}{\mathcal Cat}}}
\providecommand{\cf}{cf.\ }
\providecommand{\eg}{e.g.\ }
\providecommand{\id}{\ensuremath{\mathrm{id}}}
\providecommand{\ie}{i.e.\ }
\providecommand{\gr}{\ensuremath{\mathrm{gr}}}
\providecommand{\I}{\ensuremath{\mathcal I}}
\providecommand{\Kac}{\ensuremath{{\mathcal K}_{\mathrm{ac}}}}
\providecommand{\loccit}{loc.\,cit.\ }
\DeclareSymbolFont{bbold}{U}{bbold}{m}{n}
\DeclareSymbolFontAlphabet{\mathbbold}{bbold}
\providecommand{\one}{\ensuremath{\mathbbold{1}}}
\providecommand{\op}{\ensuremath{\mathrm{op}}}
\providecommand{\prmod}[1]{\ensuremath{\mathfrak{m}_{#1}}}
\providecommand{\prrat}{\ensuremath{\mathfrak{m}_{0}}}
\providecommand{\pret}[1]{\ensuremath{\mathfrak{e}_{#1}}}
\providecommand{\Q}{\ensuremath{\mathbb{Q}}}
\providecommand{\R}{\ensuremath{\mathcal R}}
\providecommand{\seqtimes}{\ensuremath{\otimes_{\Z^{\mathrm{op}}}}}
\providecommand{\T}{\ensuremath{\mathcal T}}
\providecommand{\ttCat}{\ensuremath{tt\Cat}}
\providecommand{\ttCatp}{\ensuremath{2\text{-}tt\Cat}}
\providecommand{\Z}{\ensuremath{\mathbb{Z}}}
\DeclarePairedDelimiter{\delimpar}{(}{)}
\newcommand{\prns}[2][0]{%
  \ifcase#1\relax
    \delimpar{#2}\or
    \delimpar[\big]{#2}\or
    \delimpar[\Big]{#2}\or
    \delimpar[\bigg]{#2}\or
    \delimpar[\Bigg]{#2}
  \else
    \delimpar*{#2}
  \fi}
\def\scripts#1#2#3{\def\scripts@{\prns[#1]{#3}}\def\scripts@@{#2}\def\scripts@@@{#2}\@ifnextchar^\@sup\@nsup}
\def\@sup^#1{\def\scripts@@@{\scripts@@^{#1}}\@ifnextchar_\@supsub{\scripts@@@\scripts@}}
\def\@supsub_#1{\scripts@@@_{#1}\scripts@}
\def\@nsup{\@ifnextchar_{\@sub}{\scripts@@@\scripts@}}
\def\@sub_#1{\def\scripts@@@{\scripts@@_{#1}}\@ifnextchar^\@subsup{\scripts@@@\scripts@}}
\def\@subsup^#1{\scripts@@@^{#1}\scripts@}
\newcommand{\C}[2][0]{\scripts{#1}{\operatorname{\mathcal{C}}}{#2}}
\newcommand{\D}[2][0]{\scripts{#1}{\operatorname{\mathcal{D}}}{#2}}
\newcommand{\K}[2][0]{\scripts{#1}{\operatorname{\mathcal{K}}}{#2}}
\begin{document}
\title{tt-geometry of filtered modules}
\author{Martin Gallauer}
\begin{abstract}
  We compute the tensor triangular spectrum of perfect complexes of filtered modules over a commutative ring, and deduce a classification of the thick tensor ideals. We give two proofs: one by reducing to perfect complexes of graded modules which have already been studied in the literature \cite{ambrogio-stevenson:graded-ring,ambrogio-stevenson:tt-comparison-2-rings}, and one more direct for which we develop some useful tools.
\end{abstract}
\keywords{Tensor triangulated category, tt-geometry, filtration, filtered module, classification.}
\subjclass[2010]{18E30, 
18D10, 
18G99. 
 }
\maketitle

\tableofcontents
\section{Introduction}
\label{sec:intro}

One of the age-old problems mathematicians engage in is to classify their objects of study, up to an appropriate equivalence relation. In contexts in which the domain is organized in a category with compatible tensor and triangulated structure (we call this a \emph{tt-category}) it is natural to view objects as equivalent when they can be constructed from each other using sums, extensions, translations, tensor product etc., in other words, using the tensor and triangulated structure alone. This can be made precise by saying that the objects generate the same thick tensor ideal (or, \emph{tt-ideal}) in the tt-category. This sort of classification is precisely what tt-geometry, as developed by Balmer, achieves. To a (small) tt-category $\T$ it associates a topological space $\spc(\T)$ called the \emph{tt-spectrum} of $\T$ which, via its Thomason subsets, classifies the (radical) tt-ideals of~$\T$. A number of classical mathematical domains have in the meantime been studied through the lens of tt-geometry; we refer to~\cite{balmer:icm} for an overview of the basic theory, its early successes and applications.

One type of context which does not seem to have received any attention so far arises from filtered objects. Examples pertinent to tt-geometry abound: filtrations by the weight in algebraic geometry induce filtrations on cohomology theories, giving rise to filtered vector spaces, representations or motives; (mixed) Hodge theory involves bifiltered vector spaces; filtrations by the order of a differential operator play an important role in the theory of ${\mathcal D}$-modules.

In this note, we take first steps in the study of filtered objects through the lens of tt-geometry by focusing on a particularly interesting case whose unfiltered analogue is well-understood. Namely, we give a complete account of the tt-geometry of filtered modules. This is already enough to say something interesting about certain motives, as we explain at the end of this introduction. To describe our results in more detail, let us recall the analogous situation for modules.

Let $R$ be a ring, assumed commutative and with unit. Its derived category $\D{R}$ is a tt-category which moreover is compactly generated, and the compact objects coincide with the rigid (or, strongly dualizable) objects, which are also called perfect complexes. These are (up to isomorphism in the derived category) the bounded complexes of finitely generated projective $R$-modules. The full subcategory $\per(R)$ of perfect complexes inherits the structure of a (small) tt-category, and the Hopkins-Neeman-Thomason classification of its thick subcategories can be interpreted as the statement that the tt-spectrum $\spc(\per(R))$ is precisely the Zariski spectrum~$\spec(R)$. In this particular case, thick subcategories are the same as tt-ideals so that this result indeed classifies perfect complexes up to the triangulated and tensor structure available.

In this note we will replicate these results for filtered $R$-modules. Its derived category $\D{\FMd(R)}$ is a tt-category which moreover is compactly generated, and the compact objects coincide with the rigid objects. We characterize these ``perfect complexes'' as bounded complexes of ``finitely generated projective'' objects in the category $\FMd(R)$ of filtered $R$-modules.\footnote{In the body of the text these are rather called \emph{split finite projective} for reasons which will become apparent when they are introduced.} The full subcategory $\fperalt$ of perfect complexes inherits the structure of a (small) tt-category. For a regular ring $R$ this is precisely the filtered derived category of $R$ in the sense first studied by Illusie in~\cite{illusie:cotangent-complex-I}, and for general rings it is a full subcategory. Our main theorem computes the tt-spectrum of this tt-category.
{
\renewcommand{\thethm}{\ref{main-thm}}
\begin{thm-intro}
  The tt-spectrum of $\fperalt$ is canonically isomorphic to the homogeneous Zariski spectrum $\spech(R[\beta])$ of the polynomial ring in one variable. In particular, the underlying topological space contains two copies of $\spec(R)$, connected by specialization. Schematically:
\begin{center}
\scalebox{0.8}{  \begin{tikzpicture}
    \node at (-4.5,0) {$\mathrm{Spec}(R)\approx U(\beta)$};
    \node at (-4.5,2.5) {$\mathrm{Spec}(R)\approx Z(\beta)$};
    \draw (0,0) ellipse (2cm and .7cm);
    \draw (0,2.5) ellipse (2cm and .7cm);
    \node[label=left:$\mathfrak{p}$] (p) at (-1,0) {$\bullet$};
    \node[label={[label distance=-.35cm]35:$\mathfrak{p}+\langle\beta\rangle$}] (pb) at (-1,2.5) {$\bullet$};
    \draw[-] (p.north) to (pb.south);
    \node[label=left:$\mathfrak{q}$] (q) at (1,-.3) {$\bullet$};
    \node[label={[label distance=-.15cm]90:$\mathfrak{q}+\langle\beta\rangle$}] (qb) at (1,2.2) {$\bullet$};
    \draw[-] (q.north) to (qb.south);
    \draw [decorate,decoration={brace,amplitude=10pt,mirror,raise=4pt},yshift=0pt] (2.5,-0.7) -- (2.5,3.2) node [black,midway,xshift=1.5cm] {$\spech(R[\beta])$};  \end{tikzpicture}
}\end{center}
\end{thm-intro}
\addtocounter{thm}{-1}
}
As a consequence we are able to classify the tt-ideals in~$\fperalt$. To state it precisely notice that we may associate to any filtered $R$-module $M$ its underlying $R$-module $\pi(M)$ as well as the $R$-module of its graded pieces~$\gr(M)$. These induce two tt-functors~$\fperalt\to\per(R)$. Also, recall that the \emph{support} of an object $M\in\per(R)$, denoted by $\supp(M)$, is the set of primes in $\spc(\per(R))=\spec(R)$ which do not contain $M$. This is extended to a set ${\mathcal E}$ of objects by taking the union of the supports of its elements: $\supp({\mathcal E}):=\cup_{M\in{\mathcal E}}\supp(M)$. Conversely, starting with a set of primes $Y\subset\spec(R)$, we define ${\mathcal K}_{Y}:=\{M\in\per(R)\mid \supp(M)\subset Y\}$.

{
\renewcommand{\thethm}{\ref{classification-tt-ideals}}
\begin{cor}
  There is an inclusion preserving bijection
  \begin{align*}
    \left\{\Pi\subset \Gamma\mid \Pi,\Gamma\subset\spec(R)\text{ Thomason subsets}
    \right\}&\longleftrightarrow
              \left\{
              \text{tt-ideals in }\fperalt
              \right\} \\
     (\Pi\subset \Gamma)&\longmapsto \pi^{-1}({\mathcal K}_{\Pi})\cap\gr^{-1}({\mathcal K}_{\Gamma})\\
    \left(
    \supp(\pi{\mathcal J})\subset\supp(\gr{\mathcal J})
    \right)&\longmapsfrom {\mathcal J}
  \end{align*}
\end{cor}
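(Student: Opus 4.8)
The plan is to deduce the corollary formally from \cref{main-thm} together with Balmer's classification theorem, the only substantial point being a description of the Thomason subsets of $\spech(R[\beta])$. Since $\fperalt$ is rigid (every object is strongly dualizable), every thick tensor ideal is radical, so Balmer's classification theorem \cite{balmer:icm} applies to all tt-ideals: the assignments $\mathcal J\mapsto\supp(\mathcal J)$ and $Y\mapsto\{M\in\fperalt\mid\supp(M)\subseteq Y\}$ are mutually inverse, inclusion-preserving bijections between the tt-ideals of $\fperalt$ and the Thomason subsets of $\spec(\fperalt)$. By \cref{main-thm} we may replace $\spec(\fperalt)$ by $S:=\spech(R[\beta])$. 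It therefore remains to: (i) identify the Thomason subsets of $S$ with the pairs $(\Pi\subseteq\Gamma)$ of Thomason subsets of $\spec(R)$; and (ii) match this identification with the two recipes in the statement.

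For (i) I would use the decomposition of $S$ into the quasi-compact open $U(\beta)$ and the closed subspace $Z(\beta)$, each canonically homeomorphic to $\spec(R)$, the only specialisations between the two pieces being those sending $\mathfrak p\in U(\beta)$ to $\mathfrak p+\langle\beta\rangle\in Z(\beta)$, as in the picture accompanying \cref{main-thm}. Writing $Y_U,Y_Z\subseteq\spec(R)$ for the traces on the two copies of $\spec(R)$ of a subset $Y\subseteq S$, one first checks the purely topological statement that $Y$ is specialisation-closed if and only if $Y_U$ and $Y_Z$ are specialisation-closed and $Y_U\subseteq Y_Z$. This is then upgraded to Thomason subsets. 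In one direction, if $Y$ is Thomason then $Y_U$ is Thomason because Thomason subsets restrict along quasi-compact open immersions, while $Y_Z$ is Thomason because intersecting a Thomason-closed subset $V^{\mathrm{h}}(\mathfrak a)$ (for $\mathfrak a\subseteq R[\beta]$ a finitely generated homogeneous ideal) with $Z(\beta)$ yields $V^{\mathrm{h}}(\mathfrak a+\langle\beta\rangle)$, again of this form. In the other direction, given Thomason subsets $\Pi\subseteq\Gamma$ of $\spec(R)$, write $\Pi=\bigcup_\gamma V(J_\gamma)$ and $\Gamma=\bigcup_\alpha V(I_\alpha)$ with $I_\alpha,J_\gamma\subseteq R$ finitely generated ideals; then the subset of $S$ consisting of the copy of $\Pi$ inside $U(\beta)$ together with the copy of $\Gamma$ inside $Z(\beta)$ equals the union of the sets $V^{\mathrm{h}}(J_\gamma R[\beta])$ and $V^{\mathrm{h}}(I_\alpha R[\beta]+\langle\beta\rangle)$, where $\Pi\subseteq\Gamma$ is used to absorb the $Z(\beta)$-part of each $V^{\mathrm{h}}(J_\gamma R[\beta])$ into some $V^{\mathrm{h}}(I_\alpha R[\beta]+\langle\beta\rangle)$; hence it is Thomason. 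These two constructions are visibly mutually inverse, giving the bijection of (i).

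For (ii) I would invoke the fact, which is part of the analysis underlying \cref{main-thm}, that under the isomorphism $\spec(\fperalt)\cong S$ the maps $\spec(\pi)$ and $\spec(\gr)$ are precisely the inclusions $U(\beta)\hookrightarrow S$ and $Z(\beta)\hookrightarrow S$ — reflecting that $\pi(\beta)$ is invertible while $\gr(\beta)=0$, and that $\pi$, resp.\ $\gr$, becomes an equivalence after inverting, resp.\ killing, $\beta$. Granting this, for a tt-ideal $\mathcal J$ with $Y=\supp(\mathcal J)$ the formula $\supp(FM)=\spec(F)^{-1}(\supp M)$ gives $\supp(\pi\mathcal J)=Y_U$ and $\supp(\gr\mathcal J)=Y_Z$, so the pair attached to $Y$ by (i) is exactly $\bigl(\supp(\pi\mathcal J)\subseteq\supp(\gr\mathcal J)\bigr)$. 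Conversely, for $M\in\fperalt$ one has $M\in\pi^{-1}(\mathcal K_\Pi)\cap\gr^{-1}(\mathcal K_\Gamma)$ iff $\supp(M)\cap U(\beta)\subseteq\Pi$ and $\supp(M)\cap Z(\beta)\subseteq\Gamma$, iff $\supp(M)$ is contained in the Thomason subset $Y$ glued from $\Pi$ and $\Gamma$ (using $S=U(\beta)\cup Z(\beta)$); that is, $\pi^{-1}(\mathcal K_\Pi)\cap\gr^{-1}(\mathcal K_\Gamma)=\{M\in\fperalt\mid\supp(M)\subseteq Y\}$. Composing the bijection of (i) with $Y\mapsto\{M\in\fperalt\mid\supp(M)\subseteq Y\}$ now produces the stated correspondence, and it is inclusion-preserving in both directions because $\supp(-)$, the traces, and the passage $Y\mapsto\{M\mid\supp(M)\subseteq Y\}$ are all monotone.

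The main obstacle, as opposed to the routine point-set topology of step (i), is the input needed in step (ii): pinning down $\spec(\pi)$ and $\spec(\gr)$ as the two subspace inclusions $U(\beta)\hookrightarrow S$ and $Z(\beta)\hookrightarrow S$, which is what ties the abstract classification back to the concrete functors $\pi$ and $\gr$. I would extract this from the proof of \cref{main-thm} — concretely from the equivalences obtained by inverting and by killing the element $\beta$.
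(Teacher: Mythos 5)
Your proposal is correct and follows essentially the same route as the paper: compose Balmer's classification of tt-ideals by Thomason subsets with a bijection between Thomason subsets of the spectrum and pairs $\Pi\subseteq\Gamma$, then translate via $\supp(\xi(M))=\spc(\xi)^{-1}(\supp(M))$ and the identification of $\spc(\pi),\spc(\gr)$ with the inclusions of $U(\beta),Z(\beta)$ (which the paper indeed establishes en route to \cref{main-thm}). The only cosmetic difference is that you verify the Thomason-subset bijection by manipulating finitely generated homogeneous ideals in $\spech(R[\beta])$, whereas the paper's \cref{fper-thomason-subsets} argues abstractly with the spectral maps $\spc(\pi)$, $\spc(\gr)$ and their common retraction; both arguments are sound.
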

\addtocounter{thm}{-1}
}
Clearly, an important role is played by the element $\beta$ appearing in the Theorem. It can be interpreted as the following morphism of filtered $R$-modules. Let $R(0)$ be the module $R$ placed in filtration degree 0, while $R(1)$ is $R$ placed in degree 1 (our filtrations are by convention decreasing), and $\beta:R(0)\to R(1)$ is the identity on the underlying modules:\footnote{We call this element $\beta$ in view of the intended application described at the end of this introduction. In the context of motives considered there, $\beta$ is the ``Bott element'' of~\cite{haesemeyer-hornbostel:bott}.}
\begin{equation*}
  \xymatrix{R(1):&\cdots\ar@{}[r]|-*[@]{=}&0\ar@{}[r]|-*[@]{\subset}&R\ar@{}[r]|-*[@]{=}&R\ar@{}[r]|-*[@]{=}&\cdots\\
   R(0):\ar[u]_{\beta}& \cdots\ar@{}[r]|-*[@]{=}&0\ar[u]\ar@{}[r]|-*[@]{=}&0\ar[u]\ar@{}[r]|-*[@]{\subset}&R\ar[u]_{\id}\ar@{}[r]|-*[@]{=}&\cdots}
\end{equation*}

Note that $\beta$ has trivial kernel and cokernel but is not an isomorphism, witnessing the fact that the category of filtered modules is \emph{not} abelian. We will give two proofs of \cref{main-thm}, the first of which relies on ``abelianizing'' the category. It is observed in \cite{Schneiders:quasi-abelian} that the derived category of filtered modules is canonically identified with the derived category of graded $R[\beta]$-modules. And the tt-geometry of graded modules has been studied in \cite{ambrogio-stevenson:graded-ring,ambrogio-stevenson:tt-comparison-2-rings}. Together these two results provide a short proof of \cref{main-thm}, but in view of future studies of filtered objects in more general abelian tensor categories we thought it might be worthwile to study filtered modules in more detail and in their own right. For the second proof we will use the abelianization only minimally to construct the category of perfect complexes of filtered modules (\cref{sec:fmod,sec:fmod-derived}). The computation of the tt-geometry stays within the realm of filtered modules, as we now proceed to explain.

As mentioned above, forgetting the filtration and taking the associated graded of a filtered $R$-module gives rise to two tt-functors. It is not difficult to show that $\spc(\pi)$ and $\spc(\gr)$ are injective with disjoint images~(\cref{sec:main-thm}). The challenge is in proving that they are jointly surjective - more precisely, proving that the images of $\spc(\pi)$ and $\spc(\gr)$ are exactly the two copies of $\spec(R)$ in the picture above. As suggested by this then, and as we will prove, inverting $\beta$ (in a categorical sense) amounts to passing from filtered to unfiltered $R$-modules, while killing $\beta$ amounts to passing to the associated graded.

We prove surjectivity first for $R$ a noetherian ring, by reducing to the local case, using some general results we establish on central localization (\cref{sec:localization}), extending the discussion in~\cite{balmer:sss}. In the local noetherian case, the maximal ideal is ``generated by non-zerodivisors and nilpotent elements'' (more precisely, it admits a system of parameters); we will study how killing such elements affects the tt-spectrum (\cref{sec:reduction}) which allows us to decrease the Krull dimension of $R$ one by one until we reach the case of $R$ a field.

Although the category of filtered modules is not abelian, it has the structure of a \emph{quasi}-abelian category, and we will use the results of Schneiders on the derived category of a quasi-abelian category, in particular the existence of two t-structures~\cite{Schneiders:quasi-abelian}, to deal with the case of a field~(\cref{sec:field}). In fact, the category of filtered vector spaces can reasonably be called a \emph{semisimple} quasi-abelian category, and we will prove in general that the t-structures in that case are hereditary. With this fact it is then possible to deduce the theorem in the case of a field.

Finally, we will reduce the case of arbitrary rings to noetherian rings (\cref{sec:continuity}) by proving in general that tt-spectra are \emph{continuous}, that is for filtered colimits of tt-categories one has a canonical homeomorphism
\begin{equation*}
  \spc(\varinjlim_{i}\T_{i})\xrightarrow{\sim}\varprojlim_{i}\spc(\T_{i}).
\end{equation*}
In fact, we will prove a more general statement which we believe will be useful in other studies of tt-geometry as well, because it often allows to reduce the tt-geometry of ``infinite objects'' to the tt-geometry of ``finite objects''. For example, it shows immediately that the noetherianity assumption in the results of~\cite{ambrogio-stevenson:graded-ring} is superfluous, arguably simplifying the proof given for this observation in~\cite{ambrogio-stevenson:tt-comparison-2-rings}.

We mentioned above that one of our motivations for studying the questions discussed in this note lies in the theory of motives. Let us therefore give the following application. We are able to describe completely the spectrum of the triangulated category of Tate motives over the algebraic numbers with integer coefficients,~$\dtm(\overline{\Q},\Z)$. (Previously, only the rational part $\dtm(\overline{\Q},\Q)$ was known.)
\begin{thm*}
The tt-spectrum of $\dtm(\overline{\Q},\Z)$ consists of the following primes, with specialization relations as indicated by the lines going upward.
\begin{center}
  \vspace{0.2cm} \scalebox{0.75}{    \begin{tikzpicture}[shorten >=3pt,shorten <=3pt, level
      1/.style={level distance=1.8cm}, level 2/.style={level
        distance=1cm}, every node/.style={inner sep=2pt},
      mod/.style={circle, fill=black,}, et/.style={circle, fill=black}, root/.style={circle, fill=black}, no/.style={edge from
        parent/.style={}} ]
      \node (root) [root,label={[label distance=8mm]right: $\prrat$}] {} [grow'=up] child {node [et] {} child
        {node [mod] {}} } child {node [et] {} child {node [mod] {}} }
      child[no] {node {$\cdots$} child {node {$\cdots$}} } child {node
        [et,label=right: {$\pret{\ell}$}] {} child {node [mod,label=right: {$\prmod{\ell}$}] {}} }child[no] {node {$\cdots$} child {node {$\cdots$}} } ;
      \begin{scope}[every node/.style={right}]
        \xdef\level{root} 
        \def\rightmostnode{root-3-1} 
        \foreach \text in {rational motivic cohomology, mod-{$\ell$} \'{e}tale
          cohomology, mod-{$\ell$} motivic cohomology} { \path (\level
           -|\rightmostnode) ++(35mm,0) node{$\}$} ++(5mm,0) node
           {\text}; \xdef\level{\level-1} }
       \end{scope}
       \coordinate (br) at (0,-4mm);
     \end{tikzpicture}} \vspace{0.1cm}
\end{center}

Here, $\ell$ runs through all prime numbers, and the primes are defined by the vanishing of the cohomology theories as indicated on the right. Moreover, the proper closed subsets are precisely the finite subsets stable under specialization.
\end{thm*}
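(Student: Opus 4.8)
The plan is to reduce, \emph{via} \cref{main-thm}, to the case of perfect filtered modules over a field, and to glue the resulting pieces following the same pattern as the proof of \cref{main-thm} itself: localize over $\spec\Z=\spec(\mathrm{End}\,\one)$, identify the mod-$\ell$ fibres with $\fper(\F_\ell)$, and treat the rational point separately. (Throughout, $\one/m:=\cone(m\colon\one\to\one)$.) For the rational point: for every field $k$ the category $\dtm(k,\Q)$ carries a bounded weight structure whose pure objects are finite sums of shifts of the $\otimes$-invertible Tate twists $\Q(n)$ --- this uses the vanishing $H^{p}_{\mathcal M}(k,\Q(q))=0$ for $p>q$ and Bondarko's machinery --- so every object lies in a finite weight tower with pure graded pieces. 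Hence for $0\ne X$ some graded piece is nonzero, it has an invertible summand $\Q(n)[m]$, this summand lies in the thick $\otimes$-ideal $\langle X\rangle$, and therefore so does $\one=\Q(n)[m]\otimes\Q(-n)[-m]$; thus every nonzero $\otimes$-ideal of $\dtm(k,\Q)$ is everything, i.e.\ $\spec(\dtm(k,\Q))$ is a single point. Applied to $k=\overline{\Q}$: inside $\dtm(\overline{\Q},\Z)$ the prime $P_{\Q}$ of torsion objects equals $\langle\one/\ell\,:\,\ell\text{ prime}\rangle$, its Verdier quotient is (up to idempotent completion) $\dtm(\overline{\Q},\Q)$, and so by Balmer's localization theorem $P_{\Q}$ is the unique prime of $\dtm(\overline{\Q},\Z)$ containing it.

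For the mod-$\ell$ fibres: since $\overline{\Q}$ is separably closed its absolute Galois group is trivial, so $\ell$-adic étale realization identifies étale mod-$\ell$ Tate motives over $\overline{\Q}$ with $\per(\F_\ell)$, the Tate twist becoming trivial; and by the norm residue theorem together with rigidity, the bigraded ring $\bigoplus_{n}H^{*}_{\mathcal M}(\overline{\Q},\F_\ell(n))$ is the polynomial ring $\F_\ell[\beta]$ on the Bott element $\beta$, in cohomological degree $0$ and weight $1$. A formality argument --- $\dtm(\overline{\Q},\F_\ell)$ is generated by the Tate twists, whose graded endomorphism algebra $\F_\ell[\beta]$ is formal, being concentrated in cohomological degree $0$ --- then identifies $\dtm(\overline{\Q},\F_\ell)$ with $\fper(\F_\ell)$ as tt-categories, $\beta$ corresponding to the $\beta\colon R(0)\to R(1)$ of \cref{main-thm}. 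By \cref{main-thm}, $\spec(\dtm(\overline{\Q},\F_\ell))\cong\spech(\F_\ell[\beta])$ has exactly two points: the generic one, got by inverting $\beta$ --- the kernel of mod-$\ell$ étale realization, detected by the vanishing of mod-$\ell$ étale cohomology --- and its unique specialization, got by killing $\beta$, i.e.\ by passing to the associated graded, detected by the vanishing of mod-$\ell$ motivic cohomology.

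It then remains to assemble these and read off the topology. Central localization at a prime $\ell$ of $\Z=\mathrm{End}(\one)$ (using \cref{sec:localization}, with continuity, \cref{sec:continuity}, for the passage to the colimit) reduces to $\dtm(\overline{\Q},\Z_{(\ell)})$; killing the non-zerodivisor $\ell$ (using \cref{sec:reduction}) produces $\dtm(\overline{\Q},\F_\ell)$ together with a homeomorphism of its spectrum onto the closed subset $\supp(\one/\ell)\subseteq\spec(\dtm(\overline{\Q},\Z))$. These closed subsets are pairwise disjoint, because $\one/\ell\otimes^{\mathbb L}\one/\ell'=0$ for $\ell\ne\ell'$; their complement consists of the primes containing every $\one/\ell$, i.e.\ containing $P_{\Q}$, hence by the previous paragraph is $\{P_{\Q}\}$. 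This yields the asserted list of primes $P_{\Q}$ and $P_{\ell,\mathrm{et}}\supseteq P_{\ell,\mathcal M}$. For the remaining specialization relations, the rational $\ell$-adic realization $\dtm(\overline{\Q},\Q)\to\per(\Q_\ell)$ is conservative --- being a nonzero tt-functor out of a tt-category with one-point spectrum --- so anything killed by mod-$\ell$ étale cohomology is rationally trivial, giving $P_{\ell,\mathrm{et}}\subseteq P_{\Q}$; thus the space is irreducible with generic point $P_{\Q}$ and exactly the displayed poset. Finally, $\supp(a)$ contains $P_{\Q}$ iff $a$ is not torsion, and then conservativity of the realizations forces $\supp(a)=\spec(\dtm(\overline{\Q},\Z))$; hence every proper closed subset lies inside some $\supp(\one/n)$ and is finite (and stable under specialization), while conversely every finite subset stable under specialization is a finite union of the closed sets $\{P_{\ell,\mathcal M}\}$ and $\supp(\one/\ell)=\{P_{\ell,\mathrm{et}},P_{\ell,\mathcal M}\}$, and hence closed. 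This is exactly the claimed description.

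I expect the main obstacle to be the identification $\dtm(\overline{\Q},\F_\ell)\simeq\fper(\F_\ell)$: pinning down the mod-$\ell$ motivic cohomology ring as $\F_\ell[\beta]$ uses the full strength of the norm residue theorem, and promoting that ring-level computation to an equivalence of tt-categories requires a formality (or tilting) statement for mod-$\ell$ Tate motives over a separably closed field. Granting this, together with the weight structure on $\dtm(k,\Q)$ used for the rational point, the rest is the localization-and-gluing bookkeeping already developed for \cref{main-thm}.
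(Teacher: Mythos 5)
A preliminary but important point: this theorem is only \emph{announced} in the present paper, which states explicitly that it ``and related results are proved in a separate paper''~\cite{gallauer:tt-dtm-algclosed}; there is no in-paper proof to compare against. What the paper does signal --- the footnote identifying $\beta$ with the Bott element, and the role of \cref{main-thm} for $R=\F_{\ell}$ --- matches your architecture: a one-point rational part, mod-$\ell$ fibres modelled on $\fper(\F_{\ell})$ via the Bott element, and gluing by central localization, killing $\ell$, and disjointness of the $\supp(\one/\ell)$. You have also correctly located the crux: the tt-equivalence $\dtm(\overline{\Q},\F_{\ell})\simeq\fper(\F_{\ell})$ is the main theorem of the companion paper, and it is where essentially all the work lives (norm residue theorem, rigidity, and a genuine dg-level tilting argument). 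As written, your ``formality argument'' is a placeholder for that theorem, not a proof of it.

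Beyond that, two concrete gaps. (a) The rational point: from a bounded weight structure you get, for $0\neq X$, a finite weight tower with pure graded pieces, but you have not shown that any such graded piece lies in $\langle X\rangle$ --- weight truncations are not constructed from $X$ by the triangulated and tensor operations, so the step ``this summand lies in the thick $\otimes$-ideal $\langle X\rangle$'' is unjustified. (Also, negativity fails for the plain Tate twists, since $\hom(\Q(0),\Q(1)[1])=\overline{\Q}^{\times}\otimes\Q\neq 0$; the negative family is $\{\Q(n)[2n]\}$, so the vanishing you need is the one you quote but the statement of the heart should be adjusted accordingly.) The conclusion $\spc(\dtm(\overline{\Q},\Q))=\ast$ is true and citable --- the paper itself notes the rational part was previously known --- but your derivation of it has a hole. (b) The gluing: the analogue of \cref{reduction-nzd} for $\otimes_{\Z}\F_{\ell}$ gives only that the \emph{image} of $\spc(\dtm(\overline{\Q},\F_{\ell}))\to\spc(\dtm(\overline{\Q},\Z))$ is $\supp(\one/\ell)$; it does not give a homeomorphism onto that set. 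Injectivity has to be recovered separately, e.g.\ from the fact that your two candidate primes at $\ell$ are distinguished by the two realizations ($\cone(\beta)$ lies in one and not the other); as stated, ``a homeomorphism of its spectrum onto the closed subset'' is an overclaim. With these repairs, the final bookkeeping (pairwise disjointness of the $\supp(\one/\ell)$, irreducibility with dense point $P_{\Q}$, and finiteness of proper closed subsets via containment in some $\supp(\one/n)$) is sound.
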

As a consequence, we are able to classify the thick tensor ideals of~$\dtm(\overline{\Q},\Z)$. This Theorem and related results are proved in a separate paper~\cite{gallauer:tt-dtm-algclosed}.

\section*{Acknowledgment}

I would like to thank Paul Balmer for his interest in this note, and his critical input on an earlier version. I am also grateful to the anonymous referee whose suggestions led to improved exposition.

\section*{Conventions}
\label{sec:conventions}

A symmetric, unital monoidal structure on a category is called \emph{tensor structure} if the category is additive, and if the monoidal product is additive in each variable separately. We also call these data simply a \emph{tensor category}. A \emph{tensor functor} between tensor categories is a strong, unital, symmetric monoidal additive functor.

Our conventions regarding tensor triangular geometry mostly follow those of~\cite{balmer:sss}. A \emph{tensor triangulated category} (or \emph{tt-category} for short) is a triangulated category with a compatible (symmetric, unital) tensor structure. Typically, one assumes that the category is (essentially) small. If not specified otherwise, the tensor product is denoted by $\otimes$ and the unit by~$\one$. A \emph{tt-functor} is an exact tensor functor between tt-categories.

A \emph{tt-ideal} in a tt-category $\T$ is a thick subcategory $\I\subset \T$ such that~$\T\otimes \I\subset\I$. If $S$ is a set of objects in $\T$ we denote by $\langle S\rangle$ the tt-ideal generated by~$S$. To a small rigid tt-category $\T$ one associates a locally ringed space $\spec(\T)$, called the \emph{tt-spectrum of $\T$}, whose underlying topological space is denoted by $\spc(\T)$. It consists of \emph{prime ideals} in $\T$, \ie proper tt-ideals $\I$ such that $a\otimes b\in\I$ implies $a\in\I$ or~$b\in\I$. (The underlying topological space $\spc(\T)$ is defined even if $\T$ is not rigid.)

All rings are commutative with unit, and morphisms of rings are unital. For $R$ a ring, we denote by $\spec(R)$ the Zariski spectrum of $R$ (considered as a locally ringed space) whereas $\spc(R)$ denotes its underlying topological space (as for the tt-spectrum). We adopt similar conventions regarding graded rings $R$: they are commutative in a general graded sense~\cite[3.4]{balmer:sss}, and possess a unit. $\spech(R)$ denotes the homogeneous Zariski spectrum with underlying topological space~$\spch(R)$.

As a general rule, canonical isomorphisms in categories are typically written as equalities.

\section{Category of filtered modules}
\label{sec:fmod}
In this section we describe filtered modules from a slightly nonstandard perspective which will be useful in the sequel. Hereby we follow the treatment in~\cite{schapira-schneiders:filtered}. The idea is to embed the (non-abelian) category of filtered modules into its \emph{abelianization}, the category of presheaves of modules on the poset $\Z$. From this embedding we deduce a number of properties of the category of filtered modules. Much of the discussion in this section applies more generally to filtered objects in suitable abelian tensor categories.

Fix a commutative ring with unit~$R$. Denote by $\Md(R)$ the abelian category of $R$-modules, with its canonical tensor structure. We view $\Z$ as a monoidal category where
\begin{equation*}
\hom(m,n)=
\begin{cases}
  \{\ast\}&:m\leq n\\
  \emptyset&:m>n
\end{cases}
\end{equation*}
and~$m\otimes n=m+n$. The Day convolution product then induces a tensor structure on the category of presheaves on $\Z$ with values in $\Md(R)$ which we denote by~$\seq R$. Explicitly, an object $a$ of $\seq{R}$ is an infinite sequence of morphisms in $\Md(R)$
\begin{equation}\label{sequence}
  \cdots\to a_{n+1}\xrightarrow{a_{n,n+1}} a_{n}\xrightarrow{a_{n-1,n}} a_{n-1}\to\cdots,
\end{equation}
and the tensor product of two such objects $a$ and $b$ is described by
\begin{equation*}
  (a\seqtimes b)_{n}=\colim_{p+q\geq n}a_{p}\otimes_{R} b_{q}.
\end{equation*}
Let $M$ be an $R$-module and~$n\in\Z$. The associated presheaf $\oplus_{\hom_{\Z}(-,n)}M$ is denoted by~$M(n)$. It is the object
\begin{equation*}
  \cdots\to 0\to 0\to M\xrightarrow{\id}M\xrightarrow{\id} M\to\cdots
\end{equation*}
with the first $M$ in degree~$n$. Via the association $\sigma_{0}:M\mapsto M(0)$ we view $\Md(R)$ as a full subcategory of~$\seq{R}$. For any object $a\in\seq{R}$ and $n\in\Z$ we denote by $a(n)$ the tensor product $a\otimes R(n)$, and we call it the $n$th twist of~$a$. Explicitly, this is the sequence of \cref{sequence} shifted to the left by $n$ places, \ie $a(n)_{m}=a_{m-n}$.

The category $\seq{R}$ is $R$-linear Grothendieck abelian, and the monoidal structure is closed. Explicitly, the internal hom of $a,b\in\seq{R}$ is given by
\begin{equation*}
  \ihom(a,b)_{n}=\hom_{\seq{R}}(a(n),b).
\end{equation*}

Here is another way of thinking about $\seq{R}$. Let $a\in\seq{R}$ be a presheaf of $R$-modules. Associate to it the graded $R[\beta]$-module $\oplus_{n\in\Z}a_{n}$ with $\beta$ acting by $\beta:a\to a(1)$, \ie in degree $n$ by $a_{n-1,n}:a_{n}\to a_{n-1}$. In particular, $\beta$ is assumed to have degree -1. Conversely, given a graded $R[\beta]$-module $\oplus_{n\in\Z}M_{n}$, define a presheaf by $n\mapsto M_{n}$ and transition maps $\cdot\beta:M_{n}\to M_{n-1}$. This clearly establishes an isomorphism of categories $\seq{R}=\GMd(R[\beta])$, and it is not difficult to see that the isomorphism is compatible with the tensor structures on both sides.

\begin{dfn}
  \begin{enumerate}
  \item A \emph{filtered $R$-module} is an object $a\in \seq{R}$ such that $a_{n,n+1}$ is a monomorphism for all~$n\in\Z$. The full subcategory of filtered $R$-modules in $\seq{R}$ is denoted by~$\FMd(R)$.
  \item A \emph{finitely filtered $R$-module} is a filtered $R$-module $a$ such that $a_{n,n+1}$ is an isomorphism for almost all~$n$.
  \item A filtered $R$-module $a$ is \emph{separated} if~$\cap_{n\in\Z}a_{n}=0$.
  \end{enumerate}
\end{dfn}
For a filtered $R$-module $a$ we denote the ``underlying'' $R$-module~$\varinjlim_{n\to -\infty}a_{n}$ by $\pi(a)$. This clearly defines a functor $\pi:\FMd(R)\to\Md(R)$ which ``forgets the filtration''. In this way we recover the more classical perspective on filtrations: an $R$-module $\pi(a)$ together with a (decreasing, exhaustive) filtration $(a_{n})_{n\in\Z}$; a morphism $f:a\to b$ of filtered $R$-modules $a,b$ is an $R$-linear morphism $\pi(a)\to \pi(b)$ compatible with the filtration.

To a filtered $R$-module $a$ one can associate its ($\Z$-)graded $R$-module whose $n$th graded piece is~$\coker(a_{n,n+1})=a_{n}/a_{n+1}$. This clearly defines a functor~$\gr_{\bullet}:\FMd(R)\to\GMd(R)=\prod_{n\in\Z}\Md(R)$.

The following observation is simple but very useful.

\begin{lem}[{\cite[3.5]{schapira-schneiders:filtered}}]\label{fmod-reflective}
  The inclusion $\iota:\FMd(R)\to\seq{R}$ admits a left adjoint $\kappa:\seq{R}\to\FMd(R)$ given by
  \begin{equation*}
    \kappa(a)_{n}=\img(a_{n}\to\varinjlim_{m\to -\infty}a_{m})
  \end{equation*}
  and the canonical transition maps.
\end{lem}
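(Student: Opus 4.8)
The statement asserts that $\FMd(R)$ is a reflective subcategory of $\seq{R}$ with reflector given by the displayed formula. The plan is to verify directly the two things this entails: first, that $\kappa(a)$ really is a filtered module for every $a\in\seq{R}$; and second, that the evident map $\eta_a\colon a\to\iota\kappa(a)$ is initial among morphisms from $a$ into objects of $\FMd(R)$ (which, since $\iota$ is fully faithful, is exactly the universal property of a reflection).

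For the first point, write $c(a)=\varinjlim_{m\to-\infty}a_m$ for the colimit of the sequence underlying $a$, so that by definition $\kappa(a)_n$ is the image subobject of $c(a)$ through which the canonical cocone map $a_n\to c(a)$ factors. Since $a_{n+1}\to c(a)$ factors through $a_n\to c(a)$, the subobject $\kappa(a)_{n+1}\subseteq c(a)$ is contained in $\kappa(a)_n\subseteq c(a)$; that inclusion is the transition map asserted in the statement, and in particular it is a monomorphism, so $\kappa(a)\in\FMd(R)$. The canonical epimorphisms $a_n\twoheadrightarrow\kappa(a)_n$ onto the image are visibly compatible with the transition maps and assemble into a morphism $\eta_a\colon a\to\iota\kappa(a)$ in $\seq{R}$, natural in~$a$. (When $a$ is already filtered one has $c(a)=\pi(a)$ and $\kappa(a)=a$, consistent with $\iota$ being full.)

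For the universal property I use one elementary input: if $b\in\FMd(R)$, then each structure map $b_n\to c(b)=\varinjlim_{m\to-\infty}b_m$ is a monomorphism — an element of $b_n$ dying in the colimit already dies in some $b_{n-k}$, and $b_n\to b_{n-k}$ is a finite composite of monomorphisms, hence a monomorphism. Now let $f\colon a\to\iota b$ be any morphism in $\seq{R}$. I claim $\ker(\eta_{a,n})\subseteq\ker(f_n)$ for every $n$: if $x\in a_n$ maps to $0$ in $c(a)$ then it maps to $0$ in some $a_{n-k}$, hence by compatibility of $f$ with transition maps $f_n(x)$ maps to $0$ in $b_{n-k}$; since $b_n\to b_{n-k}$ is a monomorphism, $f_n(x)=0$. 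Consequently $f_n$ factors uniquely through the epimorphism $\eta_{a,n}$ as $f_n=\bar f_n\circ\eta_{a,n}$ with $\bar f_n\colon\kappa(a)_n\to b_n$. Precomposing the relevant naturality squares with the epimorphisms $\eta_{a,n+1}$ shows the $\bar f_n$ commute with the transition maps, so they define a morphism $\bar f\colon\kappa(a)\to b$ in $\seq R$, hence in the full subcategory $\FMd(R)$, satisfying $\iota(\bar f)\circ\eta_a=f$; and $\bar f$ is the unique such morphism, again because $\eta_a$ is a degreewise epimorphism. Thus $\eta$ is the unit of an adjunction $\kappa\dashv\iota$, and naturality of the bijection $\hom_{\FMd(R)}(\kappa a,b)\cong\hom_{\seq R}(a,\iota b)$ is routine.

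The argument is purely formal apart from one exactness fact: in $\Md(R)$ the transition maps from a chain of monomorphisms into its colimit are again monomorphisms (equivalently, filtered colimits are exact). This is what powers both the claim that $\kappa(a)$ is filtered and the key inclusion $\ker(\eta_{a,n})\subseteq\ker(f_n)$, and is the only non-categorical ingredient; beyond this I expect no genuine obstacle, only the bookkeeping of keeping images and colimits straight. One could alternatively deduce the \emph{bare} existence of a left adjoint from the fact that $\FMd(R)$ is closed under subobjects and products in the locally presentable abelian category $\seq{R}$, but the content of the lemma is the explicit formula, which must be checked by hand in any case.
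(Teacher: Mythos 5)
Your proof is correct. The paper gives no argument of its own here --- it simply cites \cite[3.5]{schapira-schneiders:filtered} --- and your direct verification (that $\kappa(a)$ is filtered because the images of the $a_n$ in $\varinjlim_{m\to-\infty}a_m$ form a nested chain of subobjects, plus the key observation that for $b\in\FMd(R)$ each $b_n\to\varinjlim_m b_m$ is a monomorphism by exactness of filtered colimits in $\Md(R)$, which forces any $f\colon a\to\iota b$ to kill $\ker(\eta_{a,n})$) is exactly the standard argument underlying the cited result.
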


It follows from \cref{fmod-reflective} that $\FMd(R)$ is complete and cocomplete. Limits, filtered colimits and direct sums are computed in $\seq{R}$ while pushouts are computed by applying the reflector $\kappa$ to the pushout in~$\seq{R}$. (The statement about limits and pushouts is formal, while the rest stems from the fact that filtered colimits and direct sums are exact in $\Md(R)$.) In particular, $\FMd(R)$ is additive and has kernels and cokernels. However, it is not an abelian category as witnessed by the morphism
\begin{equation}
  \label{beta}
  \beta:R(0)\to R(1)
\end{equation}
induced by the map $0\to 1$ in $\Z$ through the Yoneda embedding: both $\ker(\beta)$ and $\coker(\beta)$ are 0 but $\beta$ is not an isomorphism. It is an example of a \emph{non-strict} morphism. (A morphism $f:a\to b$ is called \emph{strict} if the canonical morphism $\coimg(f)\to\img(f)$ is an isomorphism, or equivalently if $\img(\pi(f))\cap b_{n}=\img(f_{n})$ for all~$n\in\Z$.) However, one can easily check that strict monomorphisms and strict epimorphisms in $\FMd(R)$ are preserved by pushouts and pullbacks, respectively~\cite[3.9]{schapira-schneiders:filtered}. In other words, $\FMd(R)$ is a \emph{quasi-}abelian category (we will use~\cite{Schneiders:quasi-abelian} as a reference for the basic theory of quasi-abelian categories).

An object $a$ in a quasi-abelian category is called \emph{projective} if $\hom(a,-)$ takes strict epimorphisms to surjections. (Note that this convention differs from the categorical notion of a projective object!) For example, for a projective $R$-module $M$ and $n\in\Z$ the object $M(n)$ is projective since~$\hom_{\FMd(R)}(M(n),-)= \hom_{R}(M,(-)_{n})$.
\begin{lem}[{\cite[3.1.8]{Schneiders:quasi-abelian}}]\label{fmod-enough-projectives}
  For any $a\in\FMd(R)$, the canonical morphism
  \begin{equation}
    \oplus_{n\in\Z}\oplus_{x\in a_{n}}R(n)\to a
  \end{equation}
  is a strict epimorphism with projective domain. In particular, the quasi-abelian category $\FMd(R)$ has enough projectives.
\end{lem}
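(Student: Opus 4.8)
The plan is to treat the two assertions about the canonical morphism
$p\colon\bigoplus_{n\in\Z}\bigoplus_{x\in a_{n}}R(n)\to a$ separately, where $p$ is assembled from the maps $R(n)\to a$ corresponding to the elements $x\in a_{n}$ under the identification $\hom_{\FMd(R)}(R(n),a)=\hom_{R}(R,a_{n})=a_{n}$; the ``enough projectives'' clause is then immediate from the definition.

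First I would check that the domain is a projective object of $\FMd(R)$. Since direct sums in $\FMd(R)$ are computed in $\seq{R}$ and a direct sum of monomorphisms of $R$-modules is again a monomorphism, the coproduct $\bigoplus_{n,x}R(n)$ genuinely lies in $\FMd(R)$. Each $R(n)$ is projective in the quasi-abelian sense — this is recorded just before the statement, because $\hom_{\FMd(R)}(R(n),-)=\hom_{R}(R,(-)_{n})=(-)_{n}$ sends strict epimorphisms to surjections. It then suffices to observe that a coproduct of projectives is projective in any quasi-abelian category: $\hom\prns[1]{\bigoplus_{i}b_{i},-}=\prod_{i}\hom(b_{i},-)$, and a product of surjections of abelian groups is a surjection, so the left-hand functor carries strict epimorphisms to surjections.

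Next I would show $p$ is a strict epimorphism by unwinding it degreewise. In degree $m$ the source is $\bigoplus_{n\le m}\bigoplus_{x\in a_{n}}R$ and $p_{m}$ sends the basis vector indexed by $x\in a_{n}$ to the image of $x$ under the transition map $a_{n}\to a_{m}$; already the summands with $n=m$ surject onto $a_{m}$, so $p_{m}$ is surjective for every $m$. Hence $\pi(p)$ is surjective, $\img(\pi(p))=\pi(a)$, and for each $m$ one has $\img(p_{m})=a_{m}=\pi(a)\cap a_{m}=\img(\pi(p))\cap a_{m}$; by the characterization of strictness recalled in the text (equivalently: the comparison $\coimg(p)\to\img(p)$ is an isomorphism) this exactly says $p$ is a strict epimorphism. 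Combining the two points, every $a\in\FMd(R)$ receives a strict epimorphism from a projective, which is the claimed statement.

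I do not expect a serious obstacle here; the excerpt essentially hands us all the needed ingredients. The one step meriting care is the passage from ``$p_{m}$ surjective for all $m$'' to ``$p$ strict epimorphism'': one must use the quasi-abelian notion of image and strictness correctly rather than reasoning as if $\FMd(R)$ were abelian. The ancillary facts — that the coproduct really lands in $\FMd(R)$, and that hom out of it splits as a product — are equally elementary but worth stating explicitly so that the projectivity argument is airtight.
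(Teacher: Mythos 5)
Your proof is correct and is precisely the argument the paper delegates to \cite[3.1.8]{Schneiders:quasi-abelian}: degreewise surjectivity plus the strictness criterion $\img(p_m)=a_m=\img(\pi(p))\cap a_m$ gives the strict epimorphism, and projectivity of the domain follows from $\hom_{\FMd(R)}(R(n),-)=(-)_n$ together with the fact that a product of surjections is a surjection. The only slip is cosmetic: with the paper's conventions $R(n)_m=R$ exactly when $m\le n$, so the degree-$m$ component of the source is indexed by $n\ge m$ rather than $n\le m$ (and the transition maps $a_n\to a_m$ you invoke indeed go from higher to lower index); this does not affect the argument, since surjectivity already comes from the $n=m$ summands.
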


Let us denote by $\sigma:\GMd(R)\to\FMd(R)$ the canonical functor which takes $(M_{n})_{n}$ to $\oplus_{n}M_{n}(n)$. A filtered $R$-module is called \emph{split} if it lies in the essential image of $\sigma$. Correspondingly we call a filtered $R$-module \emph{split free} (respectively, \emph{split projective}, \emph{split finite projective}) if it is (isomorphic to) the image of a free (respectively, projective, finite projective) graded $R$-module under $\sigma$. In other words, an object of the form $\oplus_{n}M_{n}(n)$ with $\oplus_{n}M_{n}$ free (respectively,  projective, finite projective). \cref{fmod-enough-projectives} shows that every object in $\FMd(R)$ admits a canonical split free resolution. 

It is clear that split projective objects are projective, and the converse is also true as we now prove (see \cref{fmod-projectives} below).

\begin{lem}\label{split-projective-idempotent}
  The full additive subcategory $\FP(R)$ of split projectives is idempotent complete. The same is true for the full additive subcategory $\fmd(R)$ of split finite projectives.
\end{lem}
\begin{proof}
  Let $f:a\xrightarrow{\sim} b\oplus c$ be an isomorphism, with $a$ split projective. Since $a$ is split, there is a canonical isomorphism $g:a\xrightarrow{\sim}\oplus_{n}\gr_{n}(a)(n)$, and we can define the following composition of isomorphisms:
  \begin{equation*}
    b\oplus c\xrightarrow[\sim]{f^{-1}}a\xrightarrow[\sim]{g}\oplus_{n}\gr_{n}(a)(n)
    \xrightarrow[\sim]{f}\oplus_{n}\gr_{n}(b\oplus c)(n)=
    \left(
      \oplus_{n}\gr_{n}(b)(n)
\right)\oplus 
\left(
  \oplus_{n}\gr_{n}(c)(n)
\right).
  \end{equation*}
It is easy to see that this induces an isomorphism $b\cong\oplus_{n}\gr_{n}(b)(n)$, and we also see that $\gr_{n}(b)$ is a direct summand of $\gr_{n}(a)$. In other words, $b$ is split projective as required. The same proof applies in the finite case.
\end{proof}

\begin{lem}\label{fmod-projectives}
  For a filtered $R$-module $a\in \FMd(R)$ the following are equivalent:
  \begin{enumerate}
  \item $a$ is projective.
  \item $a$ is split projective.
  \end{enumerate}
\end{lem}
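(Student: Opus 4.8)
\emph{Proof plan.}
The implication (2)$\Rightarrow$(1) is the one already asserted to be clear: if $a=\sigma(M_\bullet)=\bigoplus_n M_n(n)$ with each $M_n$ projective, then $\hom_{\FMd(R)}(a,-)=\prod_n\hom_R(M_n,(-)_n)$; a strict epimorphism in $\FMd(R)$ is exactly a levelwise surjection, each $\hom_R(M_n,-)$ preserves surjections, and a product of surjections is a surjection, so $a$ is projective. A general split projective object is a retract of a split free one, and retracts of projectives are projective. Hence the content is (1)$\Rightarrow$(2).

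For (1)$\Rightarrow$(2), suppose $a$ is projective. By \cref{fmod-enough-projectives} the canonical strict epimorphism $p\colon P:=\bigoplus_{n}\bigoplus_{x\in a_n}R(n)\to a$ has split free source, so projectivity of $a$ lets us lift $\id_a$ along $p$ and exhibit $a$ as a retract of $P$; moreover $P\cong a\oplus a'$, since $\FMd(R)$, being closed under retracts inside the abelian category $\seq R$ (a retract of an object with monic transitions again has monic transitions), is idempotent complete. Applying the additive functor $\gr_\bullet$ and using $\gr_\bullet(R(m))=R$ concentrated in degree $m$, we get that $\gr_\bullet(a)$ is a retract of the free graded module $\gr_\bullet(P)=\bigoplus_n R^{(a_n)}$, so each $\gr_n(a)=a_n/a_{n+1}$ is a projective $R$-module. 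It then remains to prove $a\cong\sigma(\gr_\bullet a)=\bigoplus_n (a_n/a_{n+1})(n)$, which is split projective. Choosing $R$-linear sections $j_n$ of the surjections $a_n\twoheadrightarrow a_n/a_{n+1}$ (possible by projectivity of the latter) and assembling them via $\hom_{\FMd(R)}\!\big(\bigoplus_n(a_n/a_{n+1})(n),a\big)=\prod_n\hom_R(a_n/a_{n+1},a_n)$ into a morphism $\phi\colon\sigma(\gr_\bullet a)\to a$, one then checks $\phi$ is an isomorphism.

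The routine half of that last step is that, for \emph{any} choice of the $j_n$, the morphism $\phi$ is a levelwise monomorphism and is strict: this is the usual ``triangularity'' computation, using $j_m(a_m/a_{m+1})\cap a_{m+1}=0$ and $j_m(a_m/a_{m+1})+a_{m+1}=a_m$. The real obstacle — and the only place where projectivity of $a$ is used beyond the first paragraph — is the levelwise surjectivity of $\phi$, i.e.\ that $a_m=\sum_{n\ge m}j_n(a_n/a_{n+1})$ with \emph{finite} sums; a careless choice of the $j_n$ genuinely fails here. I would handle it by exploiting that the strict embedding $a\hookrightarrow P$ realises $\pi(a)$ inside $\pi(P)=\bigoplus_n R^{(a_n)}$, whose elements have finite support in the grading; together with the block‑triangularity (with respect to that grading) of the idempotent $e=sp$ defining the retract, this forces the section‑expansion of any element of $a_m$ to terminate, provided the $j_n$ are chosen compatibly — e.g.\ the ones induced by $e$ itself. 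Equivalently, under $\seq R\cong\GMd(R[\beta])$ the claim becomes that a projective graded $R[\beta]$-module $Q$ is induced, $Q\cong(Q/\beta Q)\otimes_R R[\beta]$; one lifts a graded $R$-linear section of $Q\twoheadrightarrow Q/\beta Q$ to a graded $R[\beta]$-linear map $(Q/\beta Q)\otimes_R R[\beta]\to Q$ and verifies it is an isomorphism using that $Q$, as a direct summand of a free graded $R[\beta]$-module, is $\beta$-adically separated. I expect this separatedness/finite‑support bookkeeping — making sure no ``infinite tail'' survives — to be the main technical point of the proof.
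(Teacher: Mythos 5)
Your first half coincides with the paper's proof: projectivity plus the canonical strict epimorphism of \cref{fmod-enough-projectives} exhibits $a$ as a direct summand of a split free $P$, so everything reduces to showing that direct summands of split frees are split projective. At that point the routes diverge. The paper invokes \cref{split-projective-idempotent}, which transports the canonical splitting $P\xrightarrow{\sim}\oplus_n\gr_n(P)(n)$ along the isomorphism $P\cong a\oplus a'$ and reads off the block corresponding to $a$; you instead build the comparison map $\phi\colon\oplus_n\gr_n(a)(n)\to a$ by hand from sections $j_n$ of $a_n\twoheadrightarrow\gr_n(a)$. Your verification of (2)$\Rightarrow$(1), of the projectivity of each $\gr_n(a)$, and of the injectivity and strictness of $\phi$ are all fine.

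The gap is in the surjectivity of $\phi$, i.e.\ exactly at the point you flag as ``the main technical point''. You correctly observe that a careless choice of $j_n$ fails, but the tools you propose do not close the argument. Separatedness is not enough: it gives injectivity of $\tilde s$ and the density statement $Q=\img(\tilde s)+\beta^kQ$ for all $k$, and $\bigcap_k\beta^kQ=0$ does \emph{not} imply $\bigcap_k(\img(\tilde s)+\beta^kQ)=\img(\tilde s)$. Concretely, on the split free module $F=\oplus_{n\geq 0}R(n)$ the endomorphism $u=\id-\nu$, where $\nu$ is the sum of the canonical maps $\beta\colon R(n)\to R(n+1)$, induces the identity on associated gradeds but is not surjective (hitting the generator of $R(0)$ would require the infinite sum $\sum_{n\geq 0}\beta^ne_n$); equivalently, $\bar e_n\mapsto e_n-\beta e_{n+1}$ is a graded section of $F\to F/\beta F$ whose $R[\beta]$-linear extension is not onto. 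So ``unitriangular with respect to the grading'' does not imply invertible once the degrees of the generators are unbounded above, and finite support of elements alone does not rescue the successive-approximation argument: if you peel off $z\mapsto z-j_m(\bar z)$ degree by degree, the remainder lands in higher and higher filtration steps but its support can simultaneously grow, so the expansion need not terminate. Your suggestion to use the sections induced by the idempotent $e$ is the right instinct, but as written you give no argument that this particular choice makes the expansion finite, and the obvious recursion runs into the same non-termination. To complete the proof along your lines you would need a genuine additional input — e.g.\ reducing to generators in a bounded range of degrees (where graded Nakayama applies), or a Kaplansky-style decomposition into countably generated pieces followed by an induction on degree — rather than separatedness bookkeeping. (For the bounded-degree objects of $\fmd(R)$ that the rest of the paper actually uses, your argument does go through verbatim, since there $\beta^kQ$ vanishes in each degree for $k\gg 0$.)
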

\begin{proof}
  Let $a$ be projective. As remarked in \cref{fmod-enough-projectives}, there is a canonical strict epimorphism $b\to a$ with $b$ split free. By definition of projectivity, there is a section $a\to b$, and since $\FMd(R)$ has kernels and images, we deduce that $a$ is a direct summand of $b$. It therefore suffices to prove that every direct summand of a split free is split projective. This follows from \cref{split-projective-idempotent}.
\end{proof}

In general, due to the possibility of the tensor product in $\Md(R)$ not being exact, the tensor structure on $\seq{R}$ does not restrict to the subcategory~$\FMd(R)$. We can use the reflector $\kappa$ to rectify this: for $a,b\in\FMd(R)$, let
\begin{equation*}
  a\otimes b=\kappa(\iota(a)\seqtimes\iota(b)).
\end{equation*}
This defines a tensor structure on $\FMd(R)$.\footnote{This can be seen as a particular instance of \cite{day:reflection} due to the canonical isomorphisms
  \begin{equation*}
    \kappa(a\seqtimes \kappa(b))\xleftarrow{\sim}\kappa(a\seqtimes b)\xrightarrow{\sim}\kappa(\kappa(a)\seqtimes b)
  \end{equation*}
  for any $a,b\in\seq$.} It is clear that the internal hom on $\seq{R}$ restricts to a bifunctor on $\FMd(R)$, and it follows formally from \cref{fmod-reflective} that this bifunctor is the internal hom on~$\FMd(R)$.

Although we will in the sequel only use the implication (1)$\Rightarrow$(2) of the following result, it is satisfying to see these notions match up as they do in $\Md(R)$. Recall that an object $a$ in a category with filtered colimits is called \emph{finitely presented} if $\hom(a,-)$ commutes with these filtered colimits.
\begin{lem}\label{fmod-finite-projectives}
  For a filtered $R$-module $a\in\FMd(R)$ the following are equivalent:
  \begin{enumerate}
  \item $a$ is split finite projective.
  \item $a$ is rigid (or strongly dualizable).
  \item $a$ is finitely presented and projective.
  \end{enumerate}
\end{lem}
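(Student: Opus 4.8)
The plan is to prove the cycle of implications (1)$\Rightarrow$(2)$\Rightarrow$(3)$\Rightarrow$(1). Throughout I would exploit the fact that $R(n)\otimes R(m)=R(n+m)$ (the tensor structure on $\FMd(R)$ restricted to the twists of $R$ behaves like the grading shifts) together with the adjunction $\hom_{\FMd(R)}(M(n),-)=\hom_R(M,(-)_n)$ already recorded in the excerpt, and the basic observation that $\sigma$ is a tensor functor up to the usual reindexing.

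First, for (1)$\Rightarrow$(2): a split finite projective is a finite direct sum $\oplus_i R(n_i)^{\oplus r_i}$ up to a retract, and more generally $\oplus_n M_n(n)$ with $\oplus_n M_n$ finite projective over $R$. Since rigid objects are closed under finite direct sums and retracts, it suffices to show each $M(n)$ with $M$ finite projective over $R$ is rigid; and since $M$ is a retract of some $R^k$, it suffices to treat $R(n)$. But $R(n)$ is $\otimes$-invertible with inverse $R(-n)$ (again because $R(n)\otimes R(-n)=R(0)=\one$), hence strongly dualizable. This step is essentially formal.

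For (2)$\Rightarrow$(3): rigid objects in any closed tensor category with a compact unit are compact, provided $\one$ is compact and internal hom commutes with the relevant colimits; here $\one=R(0)$ is compact (filtered colimits in $\FMd(R)$ are computed in $\seq R$, degreewise, and $R$ is compact in $\Md(R)$), and for rigid $a$ one has $\hom(a,-)=\hom(\one, a^\vee\otimes -)$ which commutes with filtered colimits. So $a$ is compact. That $a$ is projective requires showing $\hom(a,-)$ sends strict epimorphisms to surjections: writing $\hom(a,-)=\hom(\one,a^\vee\otimes-)$ and using that $\one$ is projective (immediate from the adjunction, as noted for $R(0)$ in the excerpt) together with the fact that $-\otimes a^\vee$ preserves strict epimorphisms — this last point I would check using the description $a\otimes b=\kappa(\iota a\otimes_{\Z^{\op}}\iota b)$, the right-exactness of $\kappa$, and right-exactness of $\otimes_{\Z^{\op}}$ against a rigid (hence degreewise flat, being a retract of a split free) object.

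The main obstacle is (3)$\Rightarrow$(1): from $a$ compact and projective we must deduce $a$ is \emph{split} finite projective, not merely (split) projective. By \cref{fmod-projectives}, $a$ is split projective, so $a\cong\oplus_n M_n(n)$ with $P:=\oplus_n M_n$ a projective $R$-module; the content is that $P$ is finitely generated and only finitely many $M_n$ are nonzero. Compactness should force finiteness in the following way: the identity of $a$ factors through the canonical split free resolution $\oplus_{n}\oplus_{x\in a_n}R(n)\to a$ of \cref{fmod-enough-projectives}, exhibiting $a$ as a retract of a split free module $F=\oplus_{n} R(n)^{\oplus J_n}$; writing $F$ as the filtered colimit of its split free finite-rank sub-summands and using that $\id_a$ factors through a finite stage (by compactness applied to the coproduct decomposition, after checking such coproducts compute as filtered colimits of finite subsums), we get that $a$ is a retract of a split free module of finite type, i.e.\ only finitely many $n$ contribute and each $J_n$ may be taken finite. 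Then $a$ is a retract of a split finite free, and since $\fmd(R)$ is idempotent complete (\cref{split-projective-idempotent}), $a$ is split finite projective. The delicate point to get right is the interaction of compactness with the infinite coproduct over $n\in\Z$ and over $x\in a_n$ — one must argue that these coproducts are filtered colimits of their finite sub-coproducts in $\FMd(R)$, which follows from the description of filtered colimits and direct sums as computed in $\seq R$.
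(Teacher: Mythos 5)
Your proposal is correct, and for the two easy implications it essentially coincides with the paper's argument: (1)$\Rightarrow$(2) by reducing to the invertible objects $R(n)$ (the paper phrases this as ``$\sigma$ is a tensor functor, hence preserves rigids''), and (2)$\Rightarrow$(3) via $\hom(a,-)=\hom(\one,a^{\vee}\otimes-)$, compactness and projectivity of the unit, and the fact that $-\otimes a^{\vee}$ preserves filtered colimits and strict epimorphisms. One caveat on the latter step: your justification that $-\otimes a^{\vee}$ preserves strict epimorphisms appeals to $a^{\vee}$ being ``a retract of a split free'', which is not yet available at that stage (it is what the lemma is ultimately proving); the correct and simpler justification is that the monoidal structure on $\FMd(R)$ is closed, so $-\otimes a^{\vee}$ is a left adjoint and preserves all colimits, in particular cokernels, and strict epimorphisms are exactly cokernels.

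Where you genuinely diverge is (3)$\Rightarrow$(1). The paper also starts from \cref{fmod-projectives} to write $a=\oplus_{n}\gr_{n}(a)(n)$, but then introduces the right adjoint $\Delta:\Md(R)\to\FMd(R)$ (constant filtration) to $\pi$, deduces that $\pi(a)$ is a compact, i.e.\ finitely presented, $R$-module, and concludes finiteness of the decomposition from that. You instead apply compactness directly to the canonical split free cover of \cref{fmod-enough-projectives}: projectivity splits it, the split free target is the filtered colimit (computed in $\seq{R}$) of its finite subsums, so the section $a\to F$ factors through a finite stage, making $a$ a retract of a split finite free; the proof of \cref{split-projective-idempotent} then shows such a retract is split finite projective. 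Both arguments are sound. The paper's route is slightly slicker in that it transfers the finiteness question to $\Md(R)$, where ``compact projective $=$ finitely generated projective'' is standard; yours stays inside $\FMd(R)$ and is more self-contained, at the cost of having to verify (as you correctly flag) that the infinite coproducts of \cref{fmod-enough-projectives} are filtered colimits of their finite subsums in $\FMd(R)$, which indeed follows from coproducts and filtered colimits being computed in $\seq{R}$.
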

\begin{proof}
  Since $\sigma:\GMd(R)\to\FMd(R)$ is a tensor functor it preserves rigid objects. This shows the implication (1)$\Rightarrow$(2).

  For (2)$\Rightarrow$(3) notice that the unit $R(0)$ is both finitely presented and projective. The latter is clear, and the former is true as filtered colimits are computed in $\seq{R}$. The implication is now obtained from the identification
  \begin{equation*}
    \hom(a,-)=\hom(R(0),\ihom(a,R(0))\otimes -)
  \end{equation*}
  together with the fact that the tensor product preserves filtered colimits and strict epimorphisms.

  Finally for (3)$\Rightarrow$(1), we start with the identification $a=\oplus_{n}\gr_{n}(a)(n)$ with $\gr_{n}(a)$ projective $R$-modules, which exists by \cref{fmod-projectives}. Notice that the forgetful functor $\pi:\FMd(R)\to\Md(R)$ has a right adjoint $\Delta:\Md(R)\to\FMd(R)$ which takes an $R$-module to the same $R$-module with the constant filtration. It is clear that $\Delta$ commutes with filtered colimits so that
  \begin{equation*}
    \hom(\pi(a),\varinjlim -)=\hom(a,\varinjlim\Delta-)=\varinjlim\hom(a,\Delta -)=\varinjlim\hom(\pi(a),-)
  \end{equation*}
and hence $\pi(a)$ is a finitely presented $R$-module. We conclude that $a=\oplus\gr_{n}(a)(n)$ is split finite projective.
\end{proof}

\begin{cor}\label{fmod-projectives-tensor}
  \begin{enumerate}
  \item If $a\in\FMd(R)$ is projective then $a\otimes-$ preserves kernels of arbitrary morphisms.
  \item If $a,b\in\FMd(R)$ are projective then so is~$a\otimes b$.
  \end{enumerate}
\end{cor}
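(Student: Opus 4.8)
The plan is to reduce both statements to corresponding facts about split projectives and the associated graded functor. The key observation is that by \cref{fmod-projectives} a projective filtered module $a$ is split projective, hence of the form $\oplus_n P_n(n)$ with each $P_n$ a projective $R$-module, and that the tensor product $a\otimes-$ on $\FMd(R)$ is computed by applying $\kappa$ to the Day convolution $\seqtimes$ on $\seq{R}$. Since $\seqtimes$ is built out of colimits of the ordinary tensor products $\otimes_R$, and $\kappa$ is computed by a (filtered, hence exact) colimit followed by taking images, the main point will be to show that $a\seqtimes(-)$ is already ``nice enough'' on $\seq{R}$ when $a$ is split projective, and that $\kappa$ does not spoil this.

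For part (1), first I would reduce to the case $a=P(n)$ for a single projective $R$-module $P$, since $a\otimes-$ is additive and kernels commute with the (set-indexed) products/sums involved — more precisely, a kernel in $\FMd(R)$ is computed degreewise in $\Md(R)$, where kernels commute with arbitrary direct sums. Twisting by $R(n)$ is an autoequivalence of $\FMd(R)$, so it suffices to treat $a=P(0)=\sigma_0(P)$. Now for a projective $R$-module $P$, the functor $P\otimes_R-$ is exact on $\Md(R)$, and one checks that $P(0)\seqtimes b$ is simply the sequence $(P\otimes_R b_n)_n$ with the induced transition maps; in particular if $b\in\FMd(R)$ then $P(0)\seqtimes b$ already lies in $\FMd(R)$ (the transition maps stay monomorphisms because $P\otimes_R-$ preserves monos), so $\kappa$ acts as the identity and $P(0)\otimes b = (P\otimes_R b_n)_n$. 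Since kernels in $\FMd(R)$ are degreewise and $P\otimes_R-$ preserves kernels in $\Md(R)$, the functor $P(0)\otimes-$ preserves kernels of arbitrary morphisms. Assembling over $n$ via the direct-sum decomposition of $a$ gives (1).

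For part (2), if $a=\oplus_n P_n(n)$ and $b=\oplus_m Q_m(m)$ with all $P_n,Q_m$ projective $R$-modules, then using that $\otimes$ is additive in each variable and that $R(n)\otimes R(m)=R(n+m)$ together with $P(0)\otimes R(n)=P(n)$ and the computation in part (1), one gets $a\otimes b=\oplus_{n,m}(P_n\otimes_R Q_m)(n+m)$. Each $P_n\otimes_R Q_m$ is a projective $R$-module, and a direct sum of split projectives is split projective (indexing over $\Z$ by total degree), so $a\otimes b$ is split projective, hence projective by \cref{fmod-projectives}. The only mild subtlety — and the step I'd expect to need the most care — is justifying the explicit formula $P(0)\seqtimes b=(P\otimes_R b_n)_n$ from the colimit definition $(\,\seqtimes\,)_n=\colim_{p+q\geq n}(-)_p\otimes_R(-)_q$ when the first factor is represented; here one uses that $R(0)$ is the monoidal unit of $\seq{R}$, so $R(0)\seqtimes b=b$, and then that $\sigma_0$ is strong monoidal up to the flatness of $P$, i.e. $P(0)\seqtimes b\cong P\otimes_R(R(0)\seqtimes b)$ with the outer $P\otimes_R-$ applied degreewise. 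Everything else is routine bookkeeping with twists and direct sums.
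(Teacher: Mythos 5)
Your proof is correct and follows essentially the same route as the paper: the paper's own argument is the one-line observation that the tensor product commutes with direct sums, so both claims reduce via \cref{fmod-projectives} to the case of twists $P(n)$ of projective (hence flat) $R$-modules, which is exactly the reduction you carry out. Your extra care in verifying that $P(0)\seqtimes b$ is computed degreewise and already lies in $\FMd(R)$ (so that $\kappa$ does nothing) is precisely the detail the paper suppresses.
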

\begin{proof}
 Since the tensor product commutes with direct sums both statements follow from \cref{fmod-projectives}.
\end{proof}
\section{Derived category of filtered modules}
\label{sec:fmod-derived}
Quasi-abelian categories are examples of exact categories and can therefore be derived in the same way. However, the theory for quasi-abelian categories is more precise and we will exploit this fact starting in the current section. In the case of (separated, finitely) filtered $R$-modules we obtain what is classically known as the filtered derived category of~$R$. Some of its basic properties are established, a number of which are deduced from the relation with the derived category of $\seq{R}$.

For $\ast\in\{b,-,+,\emptyset\}$ we denote by $\C{\FMd(R)}^{\ast}$ the category of bounded (respectively bounded above, bounded below, unbounded) cochain complexes in $\FMd(R)$, and by $\K{\FMd(R)}^{\ast}$ the associated homotopy category. A complex
\begin{equation*}
  A:\quad \cdots\to A^{l-1}\xrightarrow{d^{l-1}}A^{l}\xrightarrow{d^{l}}A^{l+1}\to\cdots
\end{equation*}
is called \emph{strictly exact} if all differentials $d^{l}$ are strict, and the canonical morphism $\img(d^{l-1})\to\ker(d^{l})$ is an isomorphism for all~$l$. We note the following simple but useful fact.
\begin{lem}[{\cite[1]{sjodin:filtered-graded-modules}}]\label{strictly-exact}
  Let $A$ be a complex in $\FMd(R)$ and consider the following conditions:
  \begin{enumerate}
  \item $A$ is strictly exact;
  \item all its differentials $d^{l}$ are strict and the underlying complex $\pi(A)$ is exact;
  \item the associated graded complex $\gr_{\bullet}(A)$ is exact, \ie $\gr_{n}(A)$ is an exact complex for all~$n\in\Z$.
  \end{enumerate}
  We have (1)$\Leftrightarrow$(2)$\Rightarrow$(3), and if $A^{l}$ is finitely filtered and separated for all $l\in\Z$ then all conditions are equivalent.
\end{lem}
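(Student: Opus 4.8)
The plan is to transport all three conditions into the ambient presheaf category $\seq{R}$, where kernels, cokernels and images are computed degreewise, and then — for the one implication that uses the finiteness hypotheses — to run a descending induction over the filtration index. So the first thing I would record are the relevant degreewise formulas. Since kernels and filtered colimits in $\FMd(R)$ are computed in $\seq{R}$, we have $(\ker d^{l})_{n}=\ker(d^{l}_{n})$ and $\pi(A)=\varinjlim_{n\to-\infty}A^{\bullet}_{n}$, where $A^{\bullet}_{n}$ denotes the complex $\cdots\to A^{l-1}_{n}\to A^{l}_{n}\to\cdots$ of $R$-modules (a complex, because the $d^{l}$, being morphisms of filtered modules, respect the filtration). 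For images one uses that $\coker$ in $\FMd(R)$ is $\kappa$ applied to the degreewise cokernel in $\seq{R}$, together with the description of $\kappa$ in \cref{fmod-reflective} and the exactness of filtered colimits in $\Md(R)$; this gives, for $f\colon a\to b$ in $\FMd(R)$, the formula $\img_{\FMd(R)}(f)_{n}=b_{n}\cap\img(\pi f)$ with the intersection taken inside $\pi(b)$ (recall $b_{n}\hookrightarrow\pi(b)$ since $b$ is filtered). Comparing with the degreewise image $\img(f_{n})$, we see that $f$ is strict precisely when $\img(f_{n})=\img_{\FMd(R)}(f)_{n}$ for all $n$ — which is the criterion recorded in the text. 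Finally, as $\FMd(R)$ is a full subcategory of $\seq{R}$, a morphism there is an isomorphism iff it is one degreewise.

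Granting these formulas, I would prove (1)$\Leftrightarrow$(2) as follows. Assume all $d^{l}$ strict. Then in degree $n$ the canonical monomorphism $\img(d^{l-1})\to\ker(d^{l})$ of $\FMd(R)$ becomes the submodule inclusion $\img(d^{l-1}_{n})=A^{l}_{n}\cap\img(\pi d^{l-1})\subseteq\ker(d^{l}_{n})$ of $R$-modules, the first equality by strictness of $d^{l-1}$. Hence $A$ is strictly exact iff $\img(d^{l-1}_{n})=\ker(d^{l}_{n})$ for all $l,n$, i.e.\ iff every $A^{\bullet}_{n}$ is exact. Applying the exact functor $\varinjlim_{n\to-\infty}$ shows this forces $\pi(A)$ exact; conversely, if $\pi(A)$ is exact then for every $n$ one has $\img(d^{l-1}_{n})\subseteq\ker(d^{l}_{n})=A^{l}_{n}\cap\ker(\pi d^{l})=A^{l}_{n}\cap\img(\pi d^{l-1})=\img(d^{l-1}_{n})$, the last equality again by strictness of $d^{l-1}$, so all $A^{\bullet}_{n}$ are exact. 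Thus (1), (2), and the statement ``all $d^{l}$ strict and every $A^{\bullet}_{n}$ exact'' coincide. Then (2)$\Rightarrow$(3) is immediate: by definition of $\gr_{n}$ there is a short exact sequence of complexes of $R$-modules $0\to A^{\bullet}_{n+1}\to A^{\bullet}_{n}\to\gr_{n}(A)\to 0$, and since $A^{\bullet}_{n}$ and $A^{\bullet}_{n+1}$ are exact, the long exact cohomology sequence yields $\gr_{n}(A)$ exact.

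For the converse under the hypotheses, note first that a finitely filtered separated module $A^{l}$ has bounded filtration: $A^{l}_{n}=\pi(A^{l})$ for $n\ll 0$ and $A^{l}_{n}=0$ for $n\gg 0$. Assuming (3), I would first establish strictness of each $d^{l}$ by descending induction on $n$, proving $\img(\pi d^{l})\cap A^{l+1}_{n}=\img(d^{l}_{n})$. For $n\gg 0$ both sides vanish. For the step, take $x\in\img(\pi d^{l})\cap A^{l+1}_{n}$; its class $\bar x$ in $\gr_{n}(A^{l+1})$ satisfies $\gr_{n}(d^{l+1})(\bar x)=\overline{d^{l+1}(x)}=0$, since $d^{l+1}(x)=0$, hence $\bar x\in\img\gr_{n}(d^{l})$ by exactness of $\gr_{n}(A)$; lifting a preimage to $u\in A^{l}_{n}$, the difference $x-d^{l}(u)$ lies in $A^{l+1}_{n+1}\cap\img(\pi d^{l})=\img(d^{l}_{n+1})$ by the inductive hypothesis, so $x-d^{l}(u)=d^{l}(v)$ with $v\in A^{l}_{n+1}\subseteq A^{l}_{n}$, giving $x=d^{l}(u+v)\in\img(d^{l}_{n})$. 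A second descending induction on $n$, now using the long exact sequence of $0\to A^{\bullet}_{n+1}\to A^{\bullet}_{n}\to\gr_{n}(A)\to 0$ (base case $n\gg 0$ trivial, step using exactness of $\gr_{n}(A)$), shows every $A^{\bullet}_{n}$ is exact, whence $\pi(A)=\varinjlim_{n\to-\infty}A^{\bullet}_{n}$ is exact. So (2) holds, and therefore (1); all three conditions are then equivalent.

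The only step demanding real work is this last implication (3)$\Rightarrow$(1): one has to promote degreewise information about the associated graded to genuine strictness of the differentials, and the descending induction over filtration degrees is the only available mechanism — which is exactly why the boundedness furnished by ``finitely filtered and separated'' is indispensable (without it the induction has no base case). Everything else is bookkeeping with the degreewise formulas and a single long exact sequence.
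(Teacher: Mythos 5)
The paper does not prove this lemma at all --- it is quoted from Sjödin's paper --- so there is no internal proof to compare against; I can only assess your argument on its own terms, and it is correct and complete. The degreewise bookkeeping is right: kernels and filtered colimits in $\FMd(R)$ are computed in $\seq{R}$, the image formula $\img(f)_{n}=b_{n}\cap\img(\pi f)$ follows from the description of $\kappa$, and this correctly identifies strict exactness of $A$ with ``all $d^{l}$ strict and every level complex $A^{\bullet}_{n}$ exact''; the two easy implications then follow, as you say, from exactness of filtered colimits and the long exact sequence of $0\to A^{\bullet}_{n+1}\to A^{\bullet}_{n}\to\gr_{n}(A)\to 0$. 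The descending inductions for (3)$\Rightarrow$(1) are sound, with the base cases correctly supplied by the observation that ``finitely filtered and separated'' forces $A^{l}_{n}=0$ for $n\gg 0$ (and note that exactness at a fixed cohomological degree $l$ involves only finitely many $A^{l}$, so no uniform bound in $l$ is needed). One economy you missed: your second induction already yields, from (3) and the finiteness hypotheses alone, that every $A^{\bullet}_{n}$ is exact, and strictness of the differentials then falls out for free --- once $\pi(A)$ is exact one has $\img(d^{l}_{n})=\ker(d^{l+1}_{n})=A^{l+1}_{n}\cap\ker(\pi d^{l+1})=A^{l+1}_{n}\cap\img(\pi d^{l})$. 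So the first induction, while correct, is redundant.
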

The class of strictly exact complexes forms a saturated null system $\Kac^{\ast}$~\cite[1.2.15]{Schneiders:quasi-abelian} and we set~$\D{\FMd(R)}^{\ast}=\K{\FMd(R)}^{\ast}/\Kac^{\ast}$. The canonical triangulated structure on $\K{\FMd(R)}^{\ast}$ induces a triangulated structure on~$\D{\FMd(R)}^{\ast}$. As follows from \cref{strictly-exact}, this definition is an extension of the classical ``filtered derived category'' considered in~\cite{illusie:cotangent-complex-I}. There, complexes are assumed to be (uniformly) finitely filtered separated, and the localization is with respect to filtered quasi-isomorphisms, \ie morphisms $f:A\to B$ of complexes such that $\gr_{n}(f)$ is a quasi-isomorphism of complexes of $R$-modules, for all~$n\in \Z$.

The functor $\iota:\FMd(R)\to\seq{R}$ clearly preserves strictly exact complexes (we say that $\iota$ is \emph{strictly exact}), hence it derives trivially to an exact functor of triangulated categories~$\iota:\D{\FMd(R)}^{\ast}\to\D{\seq{R}}^{\ast}$.
\begin{pro}[{\cite[3.16]{schapira-schneiders:filtered}}]\label{fmod-seq-equivalence}
  The functor $\iota:\D{\FMd(R)}^{\ast}\to\D{\seq{R}}^{\ast}$ is an equivalence of categories. Its quasi-inverse is given by the left derived functor of~$\kappa$.
\end{pro}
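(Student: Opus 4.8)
The plan is to produce an explicit quasi-inverse --- the left derived functor $\dL\kappa$ of the reflector $\kappa$ of \cref{fmod-reflective} --- and to verify directly that $\iota$ is an equivalence. Two facts underlie everything. First, a \emph{comparison of acyclicities}: for a complex $A$ of filtered modules, $A$ is strictly exact if and only if $\iota A$ is exact in $\seq R$. Second, $\seq R$ has \emph{enough split free objects}: each representable $R(n)$ is projective (since $\hom_{\seq R}(R(n),-)=(-)_{n}$ is exact) and compact (filtered colimits in $\seq R$ are computed degreewise), and by the formula of \cref{fmod-enough-projectives} every object of $\seq R$ receives an epimorphism from a split free module; hence $\seq R$, being a Grothendieck category with a set of compact projective generators, admits homotopy-projective resolutions of arbitrary complexes, which we may take termwise split free.

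For the comparison, the ``only if'' direction uses \cref{strictly-exact}: a strictly exact complex has strict differentials and exact underlying complex $\pi(A)$, and for a strict differential $d$ the image $\img_{\seq R}(d)$ agrees, in each filtration degree $n$, with $\img(\pi d)\cap(-)$; a diagram chase in each degree then yields exactness in $\seq R$. For ``if'', applying the exact functor $\pi=\varinjlim_{n\to-\infty}$ to an exact complex $\iota A$ shows $\pi(A)$ is exact, while exactness in each degree forces every differential strict --- if $d^{l}(y)\in A^{l+1}_{n}$ with $y\in\pi(A^{l})$, it is annihilated by $d^{l+1}_{n}$ and so lies in $\img(d^{l}_{n})$ --- whence $A$ is strictly exact by \cref{strictly-exact}. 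I expect this comparison to be the main obstacle: ``strictly exact'' is phrased through the quasi-abelian images and kernels of $\FMd(R)$, which a priori differ from the abelian ones in $\seq R$, and it is strictness of the differentials that reconciles the two.

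The rest is then formal. Since each term of a termwise split free complex lies in the full subcategory $\FMd(R)$, chain homotopies between complexes of filtered modules are the same in $\K{\FMd(R)}$ and in $\K{\seq R}$; together with the comparison, this shows that a homotopy-projective split free resolution $P\to X$ in $\seq R$ of a complex $X$ of filtered modules is \emph{also} a strict quasi-isomorphism $P\to X$ in $\FMd(R)$, with $P$ homotopy-projective in $\K{\FMd(R)}$ as well. Essential surjectivity of $\iota$ follows at once: for $X\in\K{\seq R}^{\ast}$, a homotopy-projective split free resolution $P\to X$ displays $X$ as $\iota$ applied to the complex $P$ of filtered modules. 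For full faithfulness, given complexes $A,B$ of filtered modules and a homotopy-projective split free resolution $P\to\iota A$ in $\seq R$, one computes
\begin{equation*}
  \hom_{\D{\FMd(R)}^{\ast}}(A,B)=\hom_{\K{\FMd(R)}}(P,B)=\hom_{\K{\seq R}}(\iota P,\iota B)=\hom_{\D{\seq R}^{\ast}}(\iota A,\iota B),
\end{equation*}
using $P\cong A$ in $\D{\FMd(R)}^{\ast}$ and $\iota P\cong\iota A$ in $\D{\seq R}^{\ast}$, the homotopy-projectivity of $P$ on both sides, and the agreement of homotopies under $\iota$; this chain of identifications is precisely the map induced by $\iota$. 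Finally, the quasi-inverse so constructed sends $X$ to a split free resolution $P\to X$ regarded in $\D{\FMd(R)}^{\ast}$, and since $\kappa$ fixes $P$ this functor is exactly $\dL\kappa$, which gives the last assertion. For $\ast\in\{b,+\}$ one restricts throughout to complexes with bounded, respectively bounded-below, cohomology, invoking the standard comparisons among the boundedness variants of the derived categories (on the quasi-abelian side, \cite{Schneiders:quasi-abelian}).
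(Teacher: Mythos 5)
Your argument is essentially correct, but it is worth noting that the paper does not prove this statement at all: it is quoted from Schapira--Schneiders [3.16], and the only proof-adjacent content in the text is the remark following the Proposition that $\dL\kappa$ can be computed via the Rees functor $\lambda$, which furnishes every object of $\seq{R}$ with a functorial \emph{two-term} resolution $\ker(\varepsilon_{a})\to\iota\lambda(a)\to a$ by filtered modules. That is the route of the cited proof: one never needs full projective resolutions, only the length-one $\lambda$-resolution together with the fact that $\FMd(R)$ is closed under subobjects in $\seq{R}$. Your route instead builds K-projective, termwise split free resolutions in $\seq{R}$ and transports them across $\iota$; the engine in both cases is the same key lemma, which you correctly isolate and prove, namely that a complex of filtered modules is strictly exact in $\FMd(R)$ if and only if it is exact in $\seq{R}$ (your degreewise identification $\img_{\FMd(R)}(f)_{n}=b_{n}\cap\img(\pi f)$ versus $\img(f_{n})$, reconciled by strictness, is exactly right and is the point where \cref{strictly-exact} enters). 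Your approach buys a completely explicit quasi-inverse and works uniformly for the unbounded category; its cost is the reliance on existence of K-projective resolutions in $\seq{R}=\GMd(R[\beta])$ (unproblematic, as this is a module category over a ringoid) and, for $\ast\in\{b,+\}$, on the comparison of the boundedness variants $\D{\FMd(R)}^{b}\to\D{\FMd(R)}$ etc.\ on the quasi-abelian side, which you correctly flag and which is supplied by Schneiders' t-structure formalism; the two-term $\lambda$-resolution avoids this last issue entirely since it preserves all boundedness conditions on the nose.
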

Explicitly, $\dL\kappa$ may be computed using the ``Rees functor'' $\lambda:\seq{R}\to\FMd(R)$ which takes $a\in\seq{R}$ to the filtered $R$-module $\lambda(a)$ with
\begin{equation*}
  \lambda(a)_{n}=\oplus_{m\geq n}a_{m}
\end{equation*}
and the obvious inclusions as transition maps~\cite[3.12]{schapira-schneiders:filtered}. It comes with a canonical epimorphism $\varepsilon_{a}:\iota\lambda(a)\to a$ and since $\FMd(R)$ is closed under subobjects in $\seq{R}$, objects in $\seq{R}$ admit an additively functorial two-term resolution by objects in~$\FMd(R)$. Thus a complex $A$ in $\seq{R}$ is replaced by the cone of $\ker(\varepsilon_{A})\to \iota\lambda(A)$ which is a complex in $\FMd(R)$ and computes~$\dL\kappa(A)$.

The tensor product $\seqtimes$ on $\seq{R}$ can be left derived and yields
  \begin{equation*}
    \seqtimes^{\dL}:\D{\seq{R}}^{\ast}\times\D{\seq{R}}^{\ast}\to\D{\seq{R}}^{\ast}
  \end{equation*}
  for~$\ast\in\{-,\emptyset\}$. This follows for example from~\cite[2.3]{cisinski-deglise:homalg-grothendieck-cats} (where the descent structure is given by~$({\mathcal G}=\{R(n)\mid n\in\Z\}, {\mathcal H}=\{0\})$).
\begin{lem}\label{fmod-seq-equivalence-tensor}
  The tensor product on $\FMd(R)$ induces a left-derived tensor product
  \begin{equation*}
    \otimes^{\dL}:\D{\FMd(R)}^{\ast}\times\D{\FMd(R)}^{\ast}\to\D{\FMd(R)}^{\ast}
  \end{equation*}
  where~$\ast\in\{-,\emptyset\}$. Moreover, the equivalence of \cref{fmod-seq-equivalence} is compatible with the derived tensor products.
\end{lem}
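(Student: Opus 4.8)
The plan is to transport the derived tensor product along the equivalence of \cref{fmod-seq-equivalence} and then recognise the transported functor as the genuine left derived functor of $\otimes$, computed by split projective resolutions; the only real work lies in this last identification, and the one genuinely delicate point is the unbounded case.

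First I would exploit that $\seqtimes^{\dL}$ is already available on $\D{\seq R}^{\ast}$ and that $\iota$ is an equivalence with quasi-inverse $\dL\kappa$, and simply set
\[
A\otimes^{\dL}B:=\dL\kappa\bigl(\iota A\seqtimes^{\dL}\iota B\bigr).
\]
Since $\iota\circ\dL\kappa\simeq\id$ one gets $\iota(A\otimes^{\dL}B)\simeq\iota A\seqtimes^{\dL}\iota B$ naturally, and as $\iota R(0)=R(0)$ is the $\seqtimes^{\dL}$-unit (and $\dL\kappa\iota\simeq\id$), this makes $\D{\FMd R}^{\ast}$ into a tensor triangulated category for which $\iota$ is a tensor equivalence. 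This already gives the second assertion of the lemma, compatibility of the equivalence with the derived tensor products.

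The substance is to check that $\otimes^{\dL}$ really is the left derived functor of the tensor product on $\FMd(R)$. Here I would use split projective resolutions, which exist by \cref{fmod-enough-projectives}. The key observation is that a split projective $p=\oplus_{n}M_{n}(n)$, with each $M_{n}$ a projective (hence flat) $R$-module, is ``$\seqtimes$-flat'': on $\seq R$ the functor $\iota p\seqtimes(-)$ is computed termwise, up to twist, by the exact functors $M_{n}\otimes_{R}(-)$, so it is exact, and moreover it carries $\FMd R$ into $\FMd R$, because tensoring a monomorphism of $R$-modules with the flat module $M_{n}$ is again a monomorphism. Consequently, for a bounded-above complex $P$ of split projectives and any complex $B$ of filtered modules, every term $\iota P^{i}\seqtimes\iota B^{j}$ already lies in $\FMd R$, so the reflector $\kappa$ acts trivially and $\iota(P\otimes B)=\iota P\seqtimes\iota B$ as complexes in $\seq R$; and this complex, a bounded-above complex of $\seqtimes$-flat objects tensored with $\iota B$, already computes $\iota P\seqtimes^{\dL}\iota B$. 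Hence, for a split projective resolution $P\xrightarrow{\sim}A$,
\[
A\otimes^{\dL}B=\dL\kappa\bigl(\iota P\seqtimes^{\dL}\iota B\bigr)=\dL\kappa\,\iota(P\otimes B)=P\otimes B,
\]
exhibiting $\otimes^{\dL}$ as the left derived functor of $\otimes$ (and, incidentally, as independent of the chosen resolution).

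The main obstacle is the unbounded case $\ast=\emptyset$, where a bounded-above complex of split projectives no longer resolves an arbitrary complex and must be replaced by a suitable K-flat (or K-projective) resolution. Here I would observe that the descent structure $(\{R(n)\mid n\in\Z\},\{0\})$ used in \cite[2.3]{cisinski-deglise:homalg-grothendieck-cats} to construct $\seqtimes^{\dL}$ on the unbounded $\D{\seq R}$ is equally available on $\C{\FMd R}$—the objects $R(n)$ are projective in $\FMd R$ and, by \cref{fmod-projectives-tensor}, tensoring with them is exact—so the same cofibrant-replacement machinery produces the resolutions needed, with $\iota$ sending them to the corresponding resolutions in $\C{\seq R}$; the argument of the previous paragraph then goes through verbatim. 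Everything else reduces formally to \cref{fmod-enough-projectives}, \cref{fmod-projectives-tensor} and \cref{fmod-seq-equivalence}.
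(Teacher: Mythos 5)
Your proposal is correct and its core is exactly the paper's argument: since $\otimes$ on $\FMd(R)$ was defined as $\kappa\circ\seqtimes\circ(\iota\times\iota)$, the derived tensor product is $\dL\kappa\circ\seqtimes^{\dL}\circ(\iota\times\iota)$, and compatibility with the equivalence of \cref{fmod-seq-equivalence} is immediate from this formula. The additional verification via split projective resolutions (and the K-flat discussion in the unbounded case) goes beyond what the paper writes, which simply asserts the formula, but it is consistent with it.
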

\begin{proof}
  Recall that the tensor product was defined as~$\kappa\circ\seqtimes\circ(\iota\times\iota)$. Therefore the left-derived tensor product is given by
  \begin{equation*}
    \otimes^{\dL}=\dL\kappa\circ\seqtimes^{\dL}\circ(\iota\times\iota).
  \end{equation*}
  The second statement is then clear.
\end{proof}

\begin{cor}\label{fmod-der-compact}
  The triangulated category $\D{\FMd(R)}$ is compactly generated. For an object $A\in\D{\FMd(R)}$ the following are equivalent:
  \begin{enumerate}
  \item $A$ is compact.
  \item $A$ is rigid.
  \item $A$ is isomorphic to a bounded complex of split finite projectives.
  \end{enumerate}
\end{cor}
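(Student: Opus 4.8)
The plan is to transport compact generation along the equivalence $\iota\colon\D{\FMd(R)}\xrightarrow{\sim}\D{\seq{R}}$ of \cref{fmod-seq-equivalence} and then to identify the compact objects via the usual description of compacts in a compactly generated category, using \cref{split-projective-idempotent} for idempotent completeness. It is convenient to record at the outset a fact used repeatedly: if $P$ is a bounded complex of projectives in $\FMd(R)$ and $A$ is strictly exact, then $\hom_{\K{\FMd(R)}}(P,A)=0$ (a descending induction on the degree, using projectivity in the quasi-abelian sense, \cite{Schneiders:quasi-abelian}); consequently, for such $P$, the localization map $\hom_{\K{\FMd(R)}}(P,-)\to\hom_{\D{\FMd(R)}}(P,-)$ is an isomorphism. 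In particular each twist $R(n)$, $n\in\Z$, is compact in $\D{\FMd(R)}$, since $\hom_{\D{\FMd(R)}}(R(n),-)=\hom_{\K{\FMd(R)}}(R(n),-)=H^{0}\bigl((-)_{n}\bigr)$ and both $(-)_{n}$ and the homology of a complex of $R$-modules commute with coproducts — coproducts in $\FMd(R)$ being computed pointwise in $\seq{R}$ (see the remarks after \cref{fmod-reflective}). The $R(n)$ generate by \cref{fmod-enough-projectives} (or one transports the compact generators $\{R(n)\}$ of $\D{\seq{R}}$, which is compactly generated by the descent structure recalled before \cref{fmod-seq-equivalence-tensor}, along \cref{fmod-seq-equivalence}). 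Hence $\D{\FMd(R)}$ is compactly generated.

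For the three equivalent conditions, $(3)\Rightarrow(2)$ holds because each split finite projective is rigid by \cref{fmod-finite-projectives} and the rigid objects of a tt-category form a thick subcategory, while a bounded complex of split finite projectives is an iterated extension of shifts of its terms. For $(2)\Rightarrow(1)$ I would repeat, one level up, the argument proving $(2)\Rightarrow(3)$ in \cref{fmod-finite-projectives}: if $A$ is rigid with dual $A^{\vee}$ then $\hom_{\D{\FMd(R)}}(A,-)=\hom_{\D{\FMd(R)}}(\one,A^{\vee}\otimes^{\dL}-)$, the unit $\one=R(0)$ is compact by the previous paragraph, and $A^{\vee}\otimes^{\dL}-$ preserves coproducts, so $A$ is compact.

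The substantive implication is $(1)\Rightarrow(3)$. Let $\mathcal{P}\subset\D{\FMd(R)}$ be the full subcategory of objects isomorphic to a bounded complex of split finite projectives; the claim is that it is exactly the subcategory of compact objects, which — $\D{\FMd(R)}$ being compactly generated by $\{R(n)\}$ — is the thick subcategory $\langle R(n)\mid n\in\Z\rangle$. On the one hand every split finite projective $\oplus_{i}M_{i}(i)$ (with $M_{i}$ finitely generated projective) is a direct summand of a finite sum of twists $R(n)$, so $\mathcal{P}\subset\langle R(n)\mid n\in\Z\rangle$. On the other hand $\mathcal{P}$ is the essential image of $\K{\fmd(R)}^{b}$, and the functor $\K{\fmd(R)}^{b}\to\D{\FMd(R)}$ is fully faithful by the opening remark; its image is therefore a triangulated subcategory (the mapping cone of a chain map of bounded complexes in $\fmd(R)$ again lies in $\fmd(R)$, as $\fmd(R)$ is closed under finite sums), and it is idempotent complete because $\fmd(R)$ is (\cref{split-projective-idempotent}) and the bounded homotopy category of an idempotent complete additive category is idempotent complete. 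A triangulated subcategory closed under direct summands is thick, and $\mathcal{P}$ contains every $R(n)$, so $\langle R(n)\mid n\in\Z\rangle\subset\mathcal{P}$; hence $\mathcal{P}=\langle R(n)\mid n\in\Z\rangle$ is precisely the subcategory of compact objects.

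The only real difficulty lies in $(1)\Rightarrow(3)$, and specifically in the two structural facts about $\mathcal{P}$: the full faithfulness of $\K{\fmd(R)}^{b}\to\D{\FMd(R)}$, where the classical ``bounded complexes of projectives are homotopically projective'' argument must be carried out in the quasi-abelian setting (with strict epimorphisms in place of epimorphisms), and the passage from idempotent completeness of $\fmd(R)$ to that of $\K{\fmd(R)}^{b}$ — which is precisely the use \cref{split-projective-idempotent} was set up for.
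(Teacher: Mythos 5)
Your proof is correct and follows essentially the same route as the paper: compact generation by the twists $R(n)$, identification of the compacts with the thick subcategory they generate (Neeman), and the comparison of that thick subcategory with the bounded complexes of split finite projectives via full faithfulness of $\K{\fmd(R)}^{b}\to\D{\FMd(R)}$ together with idempotent completeness from \cref{split-projective-idempotent}. The only difference is that the paper defers this last step to \cref{fper-homotopy-derived} (proved immediately afterwards by citing \cite[1.3.22]{Schneiders:quasi-abelian}), whereas you inline it with a direct orthogonality argument for bounded complexes of projectives against strictly exact complexes; both are fine.
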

\begin{proof}
  It is easy to see \cite[3.20]{choudhury-gallauer:dg-hty} that the set $\{R(n)\mid n\in\Z\}$ compactly generates $\D{\seq{R}}$. The first statement therefore follows from \cref{fmod-seq-equivalence}. As is true in general \cite[2.2]{neeman:localizing-smashing}, the compact objects span precisely the thick subcategory generated by these generators $R(n)$. From this we see immediately that (3) implies (1). The converse implication follows from \cref{fper-homotopy-derived} below.

That (3) implies (2) is easy to see, using \cref{fmod-finite-projectives}. Finally that (2) implies (1) follows formally from the tensor unit being compact (\cf the proof of \cref{fmod-finite-projectives}).
\end{proof}

We denote by $\fper(R)$ the full subcategory of compact objects in~$\D{\FMd(R)}$. Its objects are also called \emph{perfect filtered complexes}. Note that this is an idempotent complete, rigid tt-category. We denote the tensor product on $\fper(R)$ simply by~$\otimes$. Recall that $\fmd(R)$ denotes the additive category of split finite projective filtered $R$-modules.

\begin{cor}\label{fper-homotopy-derived}
  The canonical functor $\K{\fmd(R)}^{b}\to \D{\FMd(R)}$ induces an equivalence of tt-categories
  \begin{equation*}
    \K{\fmd(R)}^{b}\xrightarrow{\sim}\fper(R).
  \end{equation*}
\end{cor}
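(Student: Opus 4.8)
The plan is to realize this as a standard "homotopy category vs. derived category" comparison, adapted to the quasi-abelian setting. First I would check that the canonical functor $\K{\fmd(R)}^{b}\to\D{\FMd(R)}$ is well-defined and lands in $\fper(R)$: a bounded complex of split finite projectives is in particular a bounded complex of projective objects of the quasi-abelian category $\FMd(R)$, hence compact and rigid by \cref{fmod-der-compact}(3)$\Rightarrow$(1),(2). That it is a tt-functor is clear since $\fmd(R)$ is closed under $\otimes$ by \cref{fmod-projectives-tensor}(2) (and under the finite-rank constraint by \cref{split-projective-idempotent}), the tensor product of complexes is computed termwise, and no derived correction is needed on projective resolutions. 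The essential image is $\fper(R)$ by \cref{fmod-der-compact}(1)$\Leftrightarrow$(3). So the only real content is full faithfulness.

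For full faithfulness the key step is the usual projectivity argument, which works in any exact category with enough projectives but which I would phrase using the quasi-abelian structure. Because $\FMd(R)$ has enough projectives (\cref{fmod-enough-projectives}, \cref{fmod-projectives}) and projective objects satisfy $\hom(P,-)$ sends strict epimorphisms to surjections, every bounded-above complex of projectives that is strictly exact is in fact null-homotopic; equivalently, bounded-above complexes of projectives see strict quasi-isomorphisms as homotopy equivalences. Concretely: for $P\in\K{\fmd(R)}^{b}$ (hence bounded, with projective terms) and any $A\in\K{\FMd(R)}^{b}$ together with a strict quasi-isomorphism $s\colon A'\to A$, lifting along $s$ is possible up to homotopy by the standard obstruction computation using projectivity degree by degree, and two lifts are homotopic by the same argument applied to the mapping cone. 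This gives
\begin{equation*}
  \hom_{\K{\FMd(R)}^{b}}(P,A)\xrightarrow{\ \sim\ }\hom_{\D{\FMd(R)}^{b}}(P,A)
\end{equation*}
for all such $A$, and in particular when $A$ itself is in $\K{\fmd(R)}^{b}$. One then has to pass from the calculus of fractions in $\D{\FMd(R)}^{b}$ to the one in $\D{\FMd(R)}$: since $\fmd(R)$-complexes are compact, $\hom_{\D{\FMd(R)}}(P,-)$ commutes with the relevant (filtered) colimits, and the bounded derived category embeds fully faithfully into the unbounded one; alternatively one invokes \cref{fmod-der-compact} directly, noting that $\K{\fmd(R)}^b$ maps to the thick subcategory generated by the $R(n)$, which is exactly $\fper(R)$, and a fully faithful functor onto a set of compact generators (up to thick closure) which is essentially surjective onto the compacts is an equivalence.

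The main obstacle I anticipate is bookkeeping around \emph{strict} exactness rather than honest exactness: in a quasi-abelian category the cone of a strict quasi-isomorphism is strictly exact, but one must be careful that the differentials involved in the lifting/homotopy arguments are strict (so that "$\hom(P,-)$ is exact on this complex" is available). This is handled by \cref{strictly-exact} and by the fact that cones and shifts of strict quasi-isomorphisms behave well; since the terms $R(n)$ (and their finite sums, and summands) are projective, the relevant $\hom(P,-)$-complexes are exact in the ordinary sense and the classical argument goes through verbatim. The only other point requiring a line of care is that $\K{\fmd(R)}^{b}$ is already idempotent complete — but this is exactly \cref{split-projective-idempotent} together with the fact that $\fper(R)$ is idempotent complete, so no Karoubi completion is needed and the equivalence is on the nose.
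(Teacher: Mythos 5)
Your proposal follows essentially the same route as the paper's proof: full faithfulness via projectivity of the objects of $\fmd(R)$ (the paper outsources the degree-by-degree lifting argument to Schneiders' equivalence $\K{\FP(R)}^{-}\xrightarrow{\sim}\D{\FMd(R)}^{-}$, which is precisely the statement you reprove by hand, combined with the full faithfulness of the vertical inclusions in a comparison square), essential surjectivity via idempotent completeness of $\K{\fmd(R)}^{b}$ together with the fact that the compact objects form the thick subcategory generated by the $R(n)$, and tensor compatibility from \cref{fmod-projectives-tensor}. Your handling of the strictness bookkeeping is correct: since the terms are projective, $\hom(P,-)$ turns strictly exact complexes into exact complexes of abelian groups, so the classical argument goes through.

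One caution: your first justification of essential surjectivity, citing \cref{fmod-der-compact}(1)$\Leftrightarrow$(3), is circular as stated. The paper proves (3)$\Rightarrow$(1) of that corollary directly, but explicitly derives the converse implication (1)$\Rightarrow$(3) \emph{from} the present statement, so you cannot invoke that equivalence here. Your parenthetical alternative --- the image is a thick subcategory (using that $\K{\fmd(R)}^{b}$ is idempotent complete by \cref{split-projective-idempotent} and \cite[2.8]{Balmer-Schlichting}, and that the functor is fully faithful) containing all $R(n)$, hence contains all compacts --- is the correct, non-circular argument and is exactly the one the paper uses; that should be the primary argument rather than a fallback.
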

\begin{proof}
  The fact that the image of the functor is contained in $\fper(R)$ was proved in \cref{fmod-der-compact}. It therefore makes sense to consider the following square of canonical exact functors
  \begin{equation*}
    \xymatrix{\K{\fmd(R)}^{b}\ar[r]\ar[d]&\fper(R)\ar[d]\\
      \K{\FP(R)}^{-}\ar[r]&\D{\FMd(R)}^{-}}
  \end{equation*}
  The vertical arrows are the inclusions of full subcategories. (For the right vertical arrow this follows from \cite[11.7]{keller:derived-categories}.) Moreover, the bottom horizontal arrow is an equivalence, by \cite[1.3.22]{Schneiders:quasi-abelian} together with \cref{fmod-enough-projectives}. We conclude that the top horizontal arrow is fully faithful as well.

  Next, we notice that since $\fmd(R)$ is idempotent complete by \cref{split-projective-idempotent}, the same is true of its bounded homotopy category \cite[2.8]{Balmer-Schlichting}. It follows that the image of the top horizontal arrow is a thick subcategory containing $R(n)$, $n\in\Z$. As remarked in the proof of \cref{fmod-der-compact}, this implies essential surjectivity.

  As tensoring with a split finite projective is strictly exact, by \cref{fmod-projectives-tensor}, the same is true for objects in $\K{\fmd(R)}^{b}$. It is then clear that the equivalence just established preserves the tensor product.
\end{proof}

For future reference we record the following simple fact.
\begin{lem}
  Let $\mathcal{J}\subset\fper(R)$ be a thick subcategory. Then the following are equivalent.
  \begin{enumerate}
  \item ${\mathcal J}$ is a tt-ideal.
  \item ${\mathcal J}$ is closed under $R(n)\otimes -$,~$n\in\Z$.
  \end{enumerate}
\end{lem}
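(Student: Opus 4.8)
The plan is to prove the two implications separately, the forward direction being immediate and the reverse being the one that requires a small argument. For $(1)\Rightarrow(2)$, a tt-ideal is by definition closed under tensoring with any object of $\fper(R)$, and in particular with $R(n)=R(n)\otimes-$ applied to the unit, so there is nothing to do.

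For $(2)\Rightarrow(1)$, suppose $\mathcal{J}$ is thick and closed under $R(n)\otimes-$ for all $n\in\Z$; I want to show $A\otimes B\in\mathcal{J}$ whenever $A\in\fper(R)$ and $B\in\mathcal{J}$. First I would reduce to the case where $A$ is a split finite projective filtered module placed in a single cochain degree: by \cref{fper-homotopy-derived}, $A$ is represented by a bounded complex of objects in $\fmd(R)$, and since $\mathcal{J}$ is thick it is closed under cones and shifts, so by induction on the length of this complex (using the stupid filtration / brutal truncation, whose successive cones are the individual terms shifted) it suffices to treat $A\in\fmd(R)$. Next, every object of $\fmd(R)$ is by definition of the form $\oplus_{n}M_{n}(n)$ with each $M_{n}$ a finitely generated projective $R$-module and only finitely many nonzero; since the tensor product commutes with finite direct sums and $\mathcal{J}$ is closed under finite direct sums (being thick, hence additive and closed under summands), it suffices to treat $A=M(n)$ for a single finitely generated projective $R$-module $M$ and a single $n$.

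Finally, for $A=M(n)$: since $M$ is finitely generated projective over $R$, it is a direct summand of some $R^{k}$, so $M(n)$ is a direct summand of $R^{k}(n)=R(n)^{\oplus k}$. Tensoring $B$ with this, $M(n)\otimes B$ is a direct summand of $R(n)^{\oplus k}\otimes B\cong (R(n)\otimes B)^{\oplus k}$, which lies in $\mathcal{J}$ by hypothesis and closure under finite direct sums; as $\mathcal{J}$ is thick it is closed under direct summands, so $M(n)\otimes B=A\otimes B\in\mathcal{J}$, completing the argument. I do not anticipate a genuine obstacle here; the only point deserving care is the bookkeeping in the reduction from a bounded complex to its terms, i.e. making sure the thickness of $\mathcal{J}$ really does let one peel off the cochain degrees one at a time via truncation triangles — but this is the standard d\'evissage and poses no difficulty.
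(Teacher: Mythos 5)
Your proof is correct and is essentially the paper's argument in unwound form: the paper simply invokes the fact that $\fper(R)$ is generated as a thick subcategory by the twists $R(n)$ (recorded in the proof of \cref{fmod-der-compact}) and concludes ${\mathcal J}\otimes\fper(R)={\mathcal J}\otimes\langle R(n)\mid n\in\Z\rangle^{\mathrm{thick}}\subset{\mathcal J}$, while your d\'evissage from a bounded complex of split finite projectives down to direct summands of $R(n)^{\oplus k}$ is precisely a hands-on proof of that generation statement. There is no gap.
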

\begin{proof}
  As remarked in the proof of \cref{fmod-der-compact}, the category of filtered complexes $\fper(R)$ is generated as a thick subcategory by $R(n)$,~$n\in\Z$. Thus (2) implies~(1):
  \begin{equation*}
    {\mathcal J}\otimes\fper(R)={\mathcal J}\otimes\langle R(n)\mid n\in\Z\rangle^{\mathrm{thick}}\subset{\mathcal J}.
  \end{equation*}
  The converse is trivial.
\end{proof}

Let us discuss the derived analogues of the functors $\pi$ and $\gr_{\bullet}$ introduced earlier.
\begin{lem}\label{pi-derived}
  The functor $\pi:\FMd(R)\to \Md(R)$ is strictly exact and derives trivially to a tt-functor $\pi:\D{\FMd(R)}\to\D{R}$. The latter preserves compact objects and restricts to a tt-functor
  \begin{equation*}
    \pi:\fper(R)\to\per(R),
  \end{equation*}
  where $\per(R)$ denotes the category of perfect complexes over $R$, \ie the compact objects in~$\D{R}$.
\end{lem}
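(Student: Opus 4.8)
The plan is to verify the four assertions in turn. Strict exactness is immediate: since $\Md(R)$ is abelian it suffices that $\pi$ carry strictly exact complexes to exact ones, and this is precisely the implication (1)$\Rightarrow$(2) of \cref{strictly-exact} (alternatively, $\pi=\varinjlim_{n\to-\infty}$ is exact because filtered colimits are exact in $\Md(R)$). As with $\iota$ above, strict exactness then makes $\pi$ derive trivially: the induced functor $\K{\FMd(R)}\to\K{\Md(R)}$ sends the null system $\Kac$ into the acyclic complexes and hence descends along the Verdier quotients to an exact functor $\pi\colon\D{\FMd(R)}\to\D{R}$ (and likewise on the bounded variants).

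For the monoidal structure I would first produce the underived natural isomorphism $\pi(a\otimes b)\cong\pi(a)\otimes_R\pi(b)$, together with $\pi(R(0))=R$. The point is that for every $c\in\seq R$ one has $\varinjlim_{n\to-\infty}\kappa(c)_n=\varinjlim_{m\to-\infty}c_m$, because $\kappa(c)_n$ is the image of $c_n$ in this colimit and these images exhaust it. Applying this to $c=\iota a\seqtimes\iota b$ and using that $-\otimes_R-$ commutes with colimits yields
\[
  \pi(a\otimes b)=\varinjlim_n\ \colim_{p+q\geq n}a_p\otimes_R b_q=\colim_{p,q}a_p\otimes_R b_q=\pi(a)\otimes_R\pi(b),
\]
and the unit, associativity and symmetry constraints are inherited from those on $\seq R$, so $\pi$ is symmetric monoidal.

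To upgrade this to the derived level, note that $\pi$ sends a split projective $\oplus_n M_n(n)$ to the flat (indeed projective) $R$-module $\oplus_n M_n$; since $\otimes^{\dL}$ on $\D{\FMd(R)}$ is computed from split projective resolutions (\cref{fmod-seq-equivalence-tensor}, \cref{fmod-enough-projectives}, \cref{fmod-projectives-tensor}) and $\pi$ is strictly exact, $\pi$ turns such a resolution into a flat resolution over $R$; hence $\pi(A\otimes^{\dL}B)\cong\pi(A)\otimes_R^{\dL}\pi(B)$ compatibly with all coherence data, i.e.\ $\pi$ is a tt-functor. Preservation of compactness is then clear from \cref{fmod-der-compact}/\cref{fper-homotopy-derived}: a bounded complex of split finite projectives is sent by $\pi$ to a bounded complex of finite projective $R$-modules, i.e.\ to a perfect complex, so $\pi$ restricts to the claimed tt-functor $\fper(R)\to\per(R)$. (That $\pi$ preserves compact objects can also be seen abstractly, its right adjoint $\Delta$ being itself strictly exact, deriving trivially, and commuting with coproducts.) The only step that is not purely formal is the compatibility with the \emph{derived} tensor products — the tensor product on $\FMd(R)$ being built from the reflector $\kappa$ — but this is accounted for by $\pi$ being a left adjoint, hence commuting with the colimits defining $\seqtimes$ and $\kappa$, together with the exactness of filtered colimits and of $-\otimes_R-$ against colimits in $\Md(R)$.
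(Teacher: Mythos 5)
Your proof is correct and follows essentially the same route as the paper's own (two-line) argument: strict exactness via \cref{strictly-exact}, trivial derivation, and preservation of compacts via \cref{fmod-der-compact}. The only cosmetic difference is that the paper gets compactness from $\pi$ being a tensor functor and hence preserving rigid objects (using compact $=$ rigid on both sides), whereas you use the equivalent characterization of compacts as bounded complexes of split finite projectives; your additional verification that $\pi$ is monoidal (underived and derived) is correct but is taken for granted in the paper.
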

\begin{proof}
  The first statement follows from \cref{strictly-exact}. The functor $\pi$ being tensor, it preserves rigid objects and the second statement follows from \cref{fmod-der-compact}.
\end{proof}

\begin{lem}\label{gr-derived}
  The functor $\gr_{\bullet}:\FMd(R)\to\GMd(R)$ is strictly exact and derives trivially to an exact functor $\gr_{\bullet}:\D{\FMd(R)}\to\D{\GMd(R)}$. The latter preserves compact objects and induces a conservative tt-functor
  \begin{equation*}
    \gr:=\oplus\gr_{\bullet}:\fper(R)\to\per(R).
  \end{equation*}
\end{lem}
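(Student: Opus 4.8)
The plan is to dispatch the several claims in the order they are stated, each time reducing to something already available. First, \emph{strict exactness} of $\gr_{\bullet}\colon\FMd(R)\to\GMd(R)$ is nothing but the implication (1)$\Rightarrow$(3) of \cref{strictly-exact}: a strictly exact complex is sent to a complex whose every graded piece is exact, and since $\GMd(R)$ is abelian its strictly exact complexes are just the exact ones. It therefore derives trivially: $\gr_{\bullet}$ carries $\Kac^{\ast}$ into the acyclic complexes of $\GMd(R)$, hence descends to a functor $\gr_{\bullet}\colon\D{\FMd(R)}^{\ast}\to\D{\GMd(R)}^{\ast}$, which is exact because it is induced by a functor on homotopy categories compatible with translation and mapping cones.

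For \emph{compactness} I would evaluate $\gr_{\bullet}$ on the compact generators $R(n)$ of $\D{\FMd(R)}$ (see the proof of \cref{fmod-der-compact}): the value is $R$ placed in degree $n$, which is compact in $\D{\GMd(R)}$. Since compact objects form a thick subcategory and $\fper(R)=\langle R(n)\mid n\in\Z\rangle^{\mathrm{thick}}$, it follows that $\gr_{\bullet}$ preserves compactness. Next I would note that $\oplus\colon\GMd(R)\to\Md(R)$, $(M_{n})_{n}\mapsto\bigoplus_{n}M_{n}$, is exact and so derives trivially to an exact functor $\oplus\colon\D{\GMd(R)}\to\D{R}$, which sends a graded module supported in finitely many degrees with finitely generated projective components to a finitely generated projective $R$-module. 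Representing objects of $\fper(R)$ by bounded complexes of split finite projectives via \cref{fper-homotopy-derived}, this shows that $\gr=\oplus\gr_{\bullet}$ maps $\fper(R)$ into $\per(R)$.

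For the \emph{tensor structure} it suffices, again by \cref{fper-homotopy-derived} and \cref{fmod-projectives}, to check monoidality of $\gr_{\bullet}$ on split finite projectives: there $(\bigoplus_{p}M_{p}(p))\otimes(\bigoplus_{q}N_{q}(q))\cong\bigoplus_{p,q}(M_{p}\otimes_{R}N_{q})(p+q)$, whose $n$th graded piece is $\bigoplus_{p+q=n}M_{p}\otimes_{R}N_{q}$, matching the graded tensor product of the associated gradeds; and $\oplus$ is strong monoidal because $\bigoplus_{n}\bigoplus_{p+q=n}M_{p}\otimes_{R}N_{q}\cong(\bigoplus_{p}M_{p})\otimes_{R}(\bigoplus_{q}N_{q})$. (I would point out that one should not expect $\gr_{\bullet}$ to be monoidal on all of $\FMd(R)$ — the same failure of flatness that prevents $\seqtimes$ from restricting to $\FMd(R)$ intervenes — which is precisely why one passes to projectives, as in \cref{fmod-projectives-tensor}.) Being also exact, $\gr$ is then a tt-functor.

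The one point that needs genuine care, and which I expect to be the crux, is \emph{conservativity}. Since $\gr$ is exact it is enough to show that $\gr(A)\cong 0$ forces $A\cong 0$. Here I would choose, by \cref{fper-homotopy-derived}, a representative of $A$ that is a bounded complex of split finite projectives; crucially each of its terms is finitely filtered and separated, so that the full force of \cref{strictly-exact} (the equivalence of all three conditions) is available. Because cohomology commutes with $\bigoplus$, the vanishing $\gr(A)=\bigoplus_{n}\gr_{n}(A)\cong 0$ in $\D{R}$ forces $H^{i}(\gr_{n}(A))=0$ for all $i$ and all $n$, i.e.\ the complex $\gr_{\bullet}(A)$ is exact in $\GMd(R)$; hence $A$ is strictly exact, and therefore — the null system $\Kac$ being saturated — isomorphic to $0$ in $\D{\FMd(R)}$. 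Everything other than this reduction to a finitely filtered separated representative is routine bookkeeping with trivially-derived functors.
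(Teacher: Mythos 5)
Your proof is correct and follows essentially the same route as the paper: strict exactness via \cref{strictly-exact}, preservation of compacts checked on (generators equivalent to) bounded complexes of split finite projectives, monoidality of $\gr$ verified on split finite projectives (the paper cites Sj\"odin for the computation you carry out explicitly), and conservativity deduced from the full equivalence in \cref{strictly-exact} applied to a finitely filtered separated representative. You correctly identify and make explicit the one point the paper leaves implicit, namely that conservativity requires choosing such a representative so that condition (3) of \cref{strictly-exact} implies condition (1).
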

\begin{proof}
  That $\gr_{\bullet}$ is strictly exact is \cref{strictly-exact}. It follows that $\gr_{\bullet}$ derives trivially to give an exact functor $\gr_{\bullet}:\D{\FMd(R)}\to\D{\GMd(R)}$. For each $n$, $\gr_{n}$ clearly sends perfect filtered complexes to perfect complexes, \ie $\gr_{\bullet}$ preserves compact objects (by \cref{fmod-der-compact}).

The functor $\oplus:\GMd(R)\to\Md(R)$ is strictly exact (in fact, it preserves arbitrary kernels and cokernels) and hence derives trivially as well to give a tt-functor which preserves compact objects.

There is a canonical natural transformation (on the underived level)
\begin{equation*}
  \gr_{\bullet}\otimes\gr_{\bullet}\to\gr_{\bullet}\circ\otimes
\end{equation*}
endowing $\gr_{\bullet}$ with the structure of a unital lax monoidal functor~\cite[3]{sjodin:filtered-graded-modules}. This natural transformation is easily seen to be an isomorphism for split finite projective filtered $R$-modules~\cite[12]{sjodin:filtered-graded-modules}. It follows that $\gr:\K{\fmd(R)}^{b}\to\K{\md(R)}^{b}$ is a tt-functor ($\md(R)$ is the category of finitely generated projective $R$-modules). Conservativity of this functor follows from \cref{strictly-exact}.
\end{proof}

Finally, notice that viewing $\Md(R)$ as a tensor subcategory of $\FMd(R)$ induces a section
\begin{equation*}
  \sigma_{0}:\per(R)\to\fper(R)
\end{equation*}
to both $\gr$ and~$\pi$.

\section{Main result}
\label{sec:main-thm}

The set of endomorphisms of the unit in a tt-category $\T$ is a (unital commutative) ring $\R_{\T}$, called the central ring of~$\T$. There is a canonical morphism of locally ringed spaces
\begin{equation*}
  \rho_{\T}:\spec(\T)\to\spec(\R_{\T})
\end{equation*}
comparing the tt-spectrum of $\T$ with the Zariski spectrum of its central ring, as explained in~\cite{balmer:sss}.

There is also a graded version of this construction. Given an invertible object $u\in\T$, it makes sense to consider the graded central ring with respect to $u$~(\cite[3.2]{balmer:sss}, see also \cref{sec:localization} for further discussion):
\begin{equation*}
  {\mathcal R}^{\bullet}_{\T,u}:=\hom_{\T}(\one,u^{\otimes\bullet}),\qquad \bullet\in\Z.
\end{equation*}
This is a unital $\epsilon$-commutative graded ring~\cite[3.3]{balmer:sss} and we can therefore consider its homogeneous spectrum. There is again a canonical morphism of locally ringed spaces
\begin{equation*}
  \rho^{\bullet}_{\T,u}:\spec(\T)\to\spech({\mathcal R}^{\bullet}_{\T,u}).
\end{equation*}
The inclusion $\R_{\T}\to \R_{\T,u}^{\bullet}$ as the degree 0 part provides a factorization~$\rho_{\T}=( R_{\T}\cap -)\circ\rho_{\T,u}^{\bullet}$.

Let us specialize to~$\T=\fper(R)$. The object $R(1)\in\fper(R)$ is clearly invertible and we define $\R_{R}^{\bullet}:=\R_{\fper(R),R(1)}^{\bullet}$, so that
\begin{equation*}
  {\mathcal R}^{\bullet}_{R}=\hom_{\fper(R)}(R(0),R(\bullet)),\qquad \bullet\in\Z.
\end{equation*}
Also,~$\rho^{\bullet}_{R}:=\rho_{\fper(R),R(1)}^{\bullet}$.

We are now in a position to state our main result.

\begin{thm}\label{main-thm}
  \mbox{}
  \begin{enumerate}
  \item The graded central ring ${\mathcal R}^{\bullet}_{R}$ is canonically isomorphic to the polynomial ring $R[\beta]$ where $\beta:R(0)\to R(1)$ as in \cref{beta} has degree 1.
  \item The morphism
    \begin{equation*}
      \rho^{\bullet}_{R}:\spec(\fper(R))\to\spech(R[\beta])
\end{equation*}
is an isomorphism of locally ringed spaces.
  \end{enumerate}
\end{thm}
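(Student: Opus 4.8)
My plan is to compute the graded central ring directly. Since $R(0)$ and $R(n)$ are split finite projective, \cref{fper-homotopy-derived} reduces $\hom_{\fper(R)}(R(0),R(n))$ to $\hom_{\FMd(R)}(R(0),R(n))$ (there is no room for homotopies). Unwinding the two filtrations, an $R$-linear endomorphism of the common underlying module $R$ is filtration-compatible exactly when $n\ge 0$, so this group is $R$ for $n\ge 0$ and $0$ otherwise, with $\beta^{n}$ (the identity on underlying modules) an $R$-basis in degree $n$. Hence $\R^{\bullet}_{R}=R[\beta]$, $\deg\beta=1$, and $\epsilon$-commutativity is automatic because $R(1)$ is ``even''. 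The comparison map $\rho^{\bullet}_{R}$ exists by \cite{balmer:sss}; the substance of the theorem is that it is an isomorphism.

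\textbf{A quick route to (2).} I would first note that by \cite{Schneiders:quasi-abelian} the Rees construction identifies $\D{\FMd(R)}$ monoidally with the derived category of $\Z$-graded $R[\beta]$-modules, carrying $R(1)$ to the degree-twist of the unit; hence $\fper(R)$ is tt-equivalent to the perfect complexes of graded $R[\beta]$-modules. Their spectrum is $\spech(R[\beta])$ by \cite{ambrogio-stevenson:graded-ring,ambrogio-stevenson:tt-comparison-2-rings} (the noetherian hypothesis there being removed by continuity, below), via precisely the graded comparison map for the degree-twist, which transports to $\rho^{\bullet}_{R}$. This already proves the theorem, but I would also give the direct argument, which stays within filtered modules.

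\textbf{The direct route to (2).} Because $\pi$ and $\gr$ both admit $\sigma_{0}$ as a section, $\spec(\pi)$ and $\spec(\gr)$ are injective; and since $\pi$ sends $\beta$ to an isomorphism while $\gr$ sends it to $0$, the map $\rho^{\bullet}_{R}$ carries the image of $\spec(\pi)$ into the open locus $U(\beta)\cong\spech(R[\beta,\beta^{-1}])\cong\spec(R)$ and the image of $\spec(\gr)$ into the closed locus $Z(\beta)\cong\spech(R)\cong\spec(R)$; in particular the two images are disjoint and $\rho^{\bullet}_{R}$ is injective on each. The crux is surjectivity. I would reduce to $R$ noetherian using continuity of tt-spectra (\cref{sec:continuity}); then, via central-localization tools extending \cite{balmer:sss}, reduce to $R$ local; then, choosing a system of parameters for the maximal ideal and analyzing how killing a non-zerodivisor, respectively a nilpotent, acts on $\spec(\fper(-))$, descend on Krull dimension to $R=k$ a field (\cref{sec:field}). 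For a field, filtered vector spaces form a semisimple quasi-abelian category whose derived category carries two hereditary t-structures, and I would compute directly that $\spec(\fper(k))$ consists of exactly the two points (one specializing to the other) of $\spech(k[\beta])$. Assembling the reductions gives surjectivity; with the injectivity above, $\rho^{\bullet}_{R}$ becomes a continuous bijection of spectral spaces, which the machinery of \cite{balmer:sss} together with Part (1) upgrades to an isomorphism of locally ringed spaces.

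\textbf{Main obstacle.} The hard part is surjectivity of $\rho^{\bullet}_{R}$ in the direct proof: the field base case, the dimension-reduction step (understanding the effect of killing parameters), and developing robust continuity and central-localization tools to run the reductions. Part (1), the injectivity and disjointness of the two copies of $\spec(R)$, and the final upgrade from a homeomorphism to an isomorphism of locally ringed spaces should all be comparatively formal.
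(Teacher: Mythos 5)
Your proposal reproduces both of the paper's arguments: the ``quick route'' is exactly the paper's first proof (Rees equivalence $\D{\FMd(R)}\simeq\D{\GMd(R[\beta])}$ plus the Dell'Ambrogio--Stevenson computation, upgraded to locally ringed spaces by \cite[6.11]{balmer:sss}), and the ``direct route'' follows the paper's second proof step for step (continuity to reduce to noetherian, central localization to reduce to local, killing parameters to descend in dimension, semisimple quasi-abelian analysis over a field). Part (1) is also computed as in the paper. Since the quick route is complete, the theorem is proved.

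One caveat on your direct route: after establishing joint surjectivity you conclude that $\rho^{\bullet}_{R}$ is ``a continuous bijection of spectral spaces,'' which you then upgrade to a homeomorphism. That inference is not automatic — a spectral bijection between spectral spaces need not be a homeomorphism. The paper invokes Hochster's criterion: a spectral bijection is a homeomorphism iff specializations lift along it, and verifying this requires an actual argument in the one nontrivial case $\beta\in\rho^{\bullet}_{R}(\mathfrak{Q})\setminus\rho^{\bullet}_{R}(\mathfrak{P})$, where one interpolates via the prime $\mathfrak{R}=\ker(\fper(R)\xrightarrow{\pi}\per(R)\to\per(R/\mathfrak{r}))$ with $\mathfrak{r}=\rho^{\bullet}_{R}(\mathfrak{Q})\cap R$, giving $\mathfrak{P}\leadsto\mathfrak{R}\leadsto\mathfrak{Q}$. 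You should add this step (or equivalently check directly that $\rho^{\bullet}_{R}$ is closed) to make the direct route airtight.
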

The first part is immediate: by \cref{fper-homotopy-derived}, morphisms $R(0)\to R(n)$ in $\fper(R)$ may be computed in the homotopy category into which $\fmd(R)$ embeds fully faithfully. Using the Yoneda embedding we therefore find
\begin{align*}
  \hom_{\fper(R)}(R(0),R(n))&=\hom_{\K{\fmd(R)}^{\mathrm{b}}}(R(0),R(n))\\
  &=\hom_{\fmd(R)}(R(0),R(n))\\
  &=\oplus_{\hom_{\Z}(0,n)}R\\
  &=
    \begin{cases}
      R\cdot \{0\to n\}&:n\geq 0\\
      0&:n<0
    \end{cases}
\end{align*}
and under this identification, $\{0\to n\}$ corresponds to~$\beta^{n}$.

In the remainder of this section we outline two proofs of the second part of \cref{main-thm}, and deduce the classification of tt-ideals in $\fper(R)$ in \cref{classification-tt-ideals}. The subsequent sections will provide the missing details.

\begin{proof}[First proof of \cref{main-thm}.(2).]  
  It is proven in [\citealp[5.1]{ambrogio-stevenson:graded-ring} ($R$ noetherian); \cite[4.7]{ambrogio-stevenson:tt-comparison-2-rings} ($R$ general)] that the comparison map
  \begin{equation*}
    \rho^{\bullet}:\spc(\D{R[\beta]}^{\mathrm{perf}}_{\scriptscriptstyle\mathrm{gr}})\to\spch(R[\beta])
  \end{equation*}
  is a homeomorphism, where $\D{R[\beta]}_{\scriptscriptstyle\mathrm{gr}}^{\mathrm{perf}}$ denotes the thick subcategory of compact objects in $\D{\GMd(R[\beta])}$. It then follows from \cref{fmod-seq-equivalence} and \cref{fmod-seq-equivalence-tensor} (as well as the identification $\seq{R}\cong\GMd(R[\beta])$ discussed in \cref{sec:fmod}) that the same is true for $\rho^{\bullet}_{R}:\spc(\fper(R))\to\spch(R[\beta])$. By~\cite[6.11]{balmer:sss}, the morphism of locally ringed spaces $\rho^{\bullet}_{R}$ is then automatically an isomorphism.
\end{proof}

For the second proof of \cref{main-thm}.(2) we proceed as follows. By~\cite[6.11]{balmer:sss}, it suffices to show that
\begin{equation*}
  \rho^{\bullet}_{R}:\spc(\fper(R))\to\spch({\mathcal R}^{\bullet}_{R})
\end{equation*}
is a homeomorphism on the underlying topological spaces.

Consider the invertible object $R\in\per(R)$ and the associated graded central ring $R[t,t^{-1}]$ where $t=\id:R\to R$ has degree~1.  The morphisms of graded $R$-algebras induced by $\gr$ and $\pi$ respectively are given by
\begin{align}\label{gr-pi-ring-morphisms}
  \xymatrix@R=0em{R[\beta]\ar[r]^-{\gr}& R[t,t^{-1}]&&R[\beta]\ar[r]^-{\pi}&R[t,t^{-1}]\\
  \beta\ar@{|->}[r]& 0&&\beta\ar@{|->}[r]& t}
\end{align}

Recall (\cref{sec:fmod-derived}) the existence of a section $\sigma_{0}$ to $\gr$ and~$\pi$. We therefore obtain for $\xi\in\{\gr,\pi\}$ commutative diagrams of topological spaces and continuous maps
\begin{equation*}
  \xymatrix{\spc(\per(R))\ar[r]^{\spc(\xi)}\ar[d]_{\rho^{\bullet}}\ar@/_4pc/[dd]_{\rho}&\spc(\fper(R))\ar[r]^{\spc(\sigma_{0})}\ar[d]_{\rho^{\bullet}_{R}}&\spc(\per(R))\ar[d]^{\rho^{\bullet}}\ar@/^4pc/[dd]^{\rho}\\
    \spch(R[t,t^{-1}])\ar[r]^{\spch(\xi)}\ar[d]_{\sim}&\spch(R[\beta])\ar[r]^{\spch(\sigma_{0})}\ar[d]&\spch(R[t,t^{-1}])\ar[d]^{\sim}\\
    \spc(R)\ar[r]_{=}^{\spc(\xi)}&\spc(R)\ar[r]_{=}^{\spc(\sigma_{0})}&\spc(R)}
\end{equation*}
where the outer vertical maps are all homeomorphisms~\cite[8.1]{balmer:sss}, and the composition of the horizontal morphisms in each row is the identity. It follows immediately that both $\spc(\gr)$ and $\spc(\pi)$ are homeomorphisms onto their respective images which are disjoint by \cref{gr-pi-ring-morphisms}. More precisely, we have
\begin{align}\label{image-gr-pi}
 \img(\spc(\gr))&\subseteq (\rho_{R}^{\bullet})^{-1}(Z(\beta))=\supp(\cone(\beta)),\\\img(\spc(\pi))&\subseteq (\rho_{R}^{\bullet})^{-1}(U(\beta))=U(\cone(\beta)).\notag
\end{align}
It now remains to prove two things:
\begin{itemize}
\item $\spc(\gr)$ and $\spc(\pi)$ are jointly surjective.
\item Specializations lift along $\rho^{\bullet}_{R}$.
\end{itemize}
Indeed, since $\rho^{\bullet}_{R}$ is a spectral map between spectral spaces~\cite[5.7]{balmer:sss}, it being a homeomorphism is equivalent to it being bijective and lifting specializations~\cite[8.16]{hochster:spectral}.

The first bullet point is the subject of the subsequent sections. Let us assume it for now and establish the second bullet point. In particular, we now assume that the inclusions in \cref{image-gr-pi} are equalities. Let $\rho_{R}^{\bullet}(\mathfrak{P})\leadsto\rho_{R}^{\bullet}(\mathfrak{Q})$ be a specialization in $\spch(R[\beta])$, \ie~$\rho_{R}^{\bullet}(\mathfrak{P})\subset\rho_{R}^{\bullet}(\mathfrak{Q})$. If $\beta\notin\rho_{R}^{\bullet}(\mathfrak{Q})$ then both primes lie in the image of $\spc(\pi)$ and we already know that~$\mathfrak{P}\leadsto\mathfrak{Q}$. Similarly, if $\beta\in\rho_{R}^{\bullet}(\mathfrak{P})$ then both primes lie in the image of $\spc(\gr)$ and we deduce again that $\mathfrak{P}\leadsto\mathfrak{Q}$. So we may assume~$\beta\in\rho_{R}^{\bullet}(\mathfrak{Q})\backslash\rho_{R}^{\bullet}(\mathfrak{P})$. Define $\mathfrak{r}=\rho_{R}^{\bullet}(\mathfrak{Q})\cap R\in\spc(R)$ and notice that
\begin{equation*}
  \rho_{R}^{\bullet}(\mathfrak{P})\subset\mathfrak{r}[\beta]\subset\mathfrak{r}+\langle\beta\rangle=\rho_{R}^{\bullet}(\mathfrak{Q}).
\end{equation*}
Consequently, the preimage of $\mathfrak{r}[\beta]$ under $\rho^{\bullet}_{R}$ is the prime
\begin{equation*}
  \mathfrak{R}=\ker\left(\fper(R)\xrightarrow{\pi}\per(R)\to \per(R/\mathfrak{r})\right)
\end{equation*}
which contains the prime
\begin{equation*}
  \mathfrak{Q}=\ker\left(\fper(R)\xrightarrow{\gr}\per(R)\to \per(R/\mathfrak{r})\right).
\end{equation*}
We now obtain specialization relations
\begin{equation*}
  \mathfrak{P}\leadsto\mathfrak{R}\leadsto\mathfrak{Q}
\end{equation*}
and the proof is complete.

As a consequence of \cref{main-thm} we will classify the tt-ideals in~$\fper(R)$.
\begin{lem}\label{fper-thomason-subsets}
  The following two maps set up an order preserving bijection
  \begin{align*}
    \left\{\Pi\subset\Gamma \mid \Pi, \Gamma\subset\spc(R)\text{ Thomason subsets}
    \right\}&\longleftrightarrow
              \left\{
              \text{Thomason subsets of }\spc(\fper(R))
              \right\} \\
     (\Pi\subset \Gamma)&\longmapsto\spc(\pi)(\Pi) \sqcup\spc(\gr)(\Gamma)\\
    (\spc(\pi)^{-1}(Y)\subset\spc(\gr)^{-1}(Y))&\longmapsfrom Y
  \end{align*}
\end{lem}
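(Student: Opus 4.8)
The plan is to use the main theorem to identify $\spc(\fper(R))$ with $\spech(R[\beta])$, and then to describe Thomason subsets of the latter explicitly in terms of pairs of Thomason subsets of $\spc(R)$. Recall from \cref{main-thm} that $\rho^\bullet_R$ identifies $\spc(\fper(R))$ with $\spech(R[\beta])$, under which $\spc(\gr)$ corresponds to the closed subset $Z(\beta)$ and $\spc(\pi)$ to the open subset $U(\beta)$, each homeomorphic to $\spc(R)$ via \eqref{image-gr-pi}. So as a set $\spc(\fper(R)) = \spc(\pi)(\spc(R)) \sqcup \spc(\gr)(\spc(R))$, a disjoint union of two copies of $\spc(R)$ with the specialization relations recorded in \cref{main-thm} (every point of the $U(\beta)$-copy specializes to the ``same'' point of the $Z(\beta)$-copy, and there are no other new specializations). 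I would first fix this picture and the notation $Y_\pi := \spc(\pi)^{-1}(Y)$, $Y_\gr := \spc(\gr)^{-1}(Y)$ for a subset $Y \subseteq \spc(\fper(R))$.

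First I would check the two maps are well-defined and mutually inverse on the level of \emph{sets}, which is essentially bookkeeping: given Thomason subsets $\Pi \subseteq \Gamma$ of $\spc(R)$, the set $\spc(\pi)(\Pi) \sqcup \spc(\gr)(\Gamma)$ is specialization-closed precisely because $\Pi \subseteq \Gamma$ — a point $\mathfrak p$ in the $U(\beta)$-copy lying in $\spc(\pi)(\Pi)$ has its unique ``new'' specialization $\mathfrak p + \langle\beta\rangle$ in the $Z(\beta)$-copy, and that point lies in $\spc(\gr)(\Gamma)$ iff $\mathfrak p \in \Gamma$, which holds since $\Pi \subseteq \Gamma$; all other specializations stay within one copy and are handled by $\Pi, \Gamma$ being specialization-closed in $\spc(R)$. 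Conversely, a specialization-closed $Y$ gives $Y_\pi \subseteq Y_\gr$ for the same reason (if $\mathfrak p \in Y_\pi$ then $\mathfrak p + \langle\beta\rangle \in Y$ by specialization-closedness, so $\mathfrak p \in Y_\gr$), and $Y = \spc(\pi)(Y_\pi) \sqcup \spc(\gr)(Y_\gr)$ by construction. The two assignments are visibly inclusion-preserving and inverse to one another.

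The substantive point is that the correspondence respects the \emph{Thomason} condition, i.e.\ that $\spc(\pi)(\Pi) \sqcup \spc(\gr)(\Gamma)$ is Thomason (a union of closed subsets each with quasi-compact open complement) if and only if $\Pi$ and $\Gamma$ are. Here I would use that $\spc(\fper(R))$ is a spectral space (\cite[5.7]{balmer:sss}) in which $U(\beta)$ is a quasi-compact open with complement $Z(\beta)$, and that $\spc(\pi)\colon \spc(R) \xrightarrow{\sim} U(\beta)$ and $\spc(\gr)\colon \spc(R) \xrightarrow{\sim} Z(\beta)$ are homeomorphisms of spectral spaces (restrictions of the spectral map $\rho^\bullet_R$ to a quasi-compact open, resp.\ closed, subspace). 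In a spectral space a specialization-closed subset is Thomason iff it is a union of its closed-with-quasi-compact-complement members; I would argue that $Z := \spc(\pi)(\Pi) \sqcup \spc(\gr)(\Gamma)$ is Thomason in $\spc(\fper(R))$ iff $Z \cap Z(\beta) = \spc(\gr)(\Gamma)$ is Thomason in $Z(\beta)$ and $Z \cap U(\beta) = \spc(\pi)(\Pi)$ is Thomason in $U(\beta)$. The ``only if'' direction is clear since intersecting a Thomason subset with a quasi-compact open (resp.\ with a closed subset whose open complement is quasi-compact) yields a Thomason subset of that subspace. For ``if'': writing $\spc(\gr)(\Gamma) = \bigcup_i C_i$ with each $C_i$ closed in $Z(\beta)$ — hence closed in $\spc(\fper(R))$, since $Z(\beta)$ is closed — with quasi-compact open complement in $Z(\beta)$, and similarly $\spc(\pi)(\Pi) = \bigcup_j D_j$ with $D_j$ closed in $U(\beta)$ with quasi-compact complement there, one takes closures $\overline{D_j}$ in $\spc(\fper(R))$; the key computation is that $\spch(R[\beta]) \setminus \overline{D_j}$ is quasi-compact, which follows because its intersection with the quasi-compact open $U(\beta)$ is the quasi-compact complement of $D_j$, while $\overline{D_j} \supseteq$ the image of $D_j$ under the specialization map into $Z(\beta)$ in a way that makes $\overline{D_j} \cap Z(\beta)$ closed with the expected quasi-compact complement — concretely, $\overline{\spc(\pi)(\{\mathfrak p\})} = \{\mathfrak p\}_\pi \cup \{\mathfrak p + \langle\beta\rangle\}$, so closures are controlled by the specialization relation described in \cref{main-thm}. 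Then $Z = \bigcup_i C_i \cup \bigcup_j \overline{D_j}$ exhibits $Z$ as Thomason. I expect this bookkeeping with closures and quasi-compactness — making precise that ``adding the $Z(\beta)$-shadow'' of a Thomason subset of $U(\beta)$ keeps things Thomason, using $\Pi \subseteq \Gamma$ to absorb those shadows into $\spc(\gr)(\Gamma)$ — to be the main obstacle, though it is entirely formal once the spectral-space picture from \cref{main-thm} is in hand. Alternatively, and perhaps more cleanly, one can transport everything through the homeomorphism $\spc(\fper(R)) \cong \spch(R[\beta])$ and invoke the known description of Thomason subsets of the homogeneous spectrum of $R[\beta]$ via homogeneous radical ideals, matching pairs $(\Pi \subseteq \Gamma)$ with homogeneous Thomason subsets by intersecting with $U(\beta)$ and $Z(\beta)$; I would present whichever of the two is shorter in the final write-up.
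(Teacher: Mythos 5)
Your proposal is correct and follows essentially the same route as the paper: both rest on the two-copies picture from \cref{main-thm} (with $\spc(\pi)$ a homeomorphism onto the open part, $\spc(\gr)$ onto the closed part, and the common retraction governing the extra specializations), and both verify the Thomason condition by decomposing into closed pieces with quasi-compact open complement. The only organizational difference is that the paper first reduces to the case where $\Pi^{c}$ and $\Gamma^{c}$ are themselves quasi-compact open and then computes the complement as $r^{-1}(\Gamma^{c})\cup\spc(\pi)(\Pi^{c})$, whereas you keep the pieces separate and take closures $\overline{D_{j}}=r^{-1}(\Pi_{j})$ — the same computation in disguise.
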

Here, the order relation on the left is given by $(\Pi\subset \Gamma)\leq (\Pi'\subset \Gamma')$ if $\Pi\subset \Pi'$ and~$\Gamma\subset \Gamma'$.
\begin{proof}
  To ease the notation, let us denote in this proof by $p:S\to T$ (respectively, $g:S\to T$) the map $\spc(\pi):\spc(R)\to\spc(\fper(R))$ (respectively~$\spc(\gr)$). It might be helpful to keep the following picture in mind.
\begin{center}
  \scalebox{0.8}{
    \begin{tikzpicture}
      \node [draw, ellipse, minimum width=4cm, minimum height=1.4cm]
      (domain) at (-5.5,1.25) {$S=\mathrm{Spc}(R)$}; \node [draw,
      ellipse, minimum width=4cm, minimum height=1.4cm] (target-p) at
      (0,0) {}; \node[right] at (target-p.0)
      {$U(\beta)\approx\mathrm{Spc}(R)$}; \node [draw, ellipse,
      minimum width=4cm, minimum height=1.4cm] (target-g) at (0,2.5)
      {}; \node[right] at (target-g.0)
      {$Z(\beta)\approx\mathrm{Spc}(R)$};

      \draw[->,shorten >=10pt,shorten <=10pt] (domain) to
      [out=70,in=160] node[midway,above]{$g$} (target-g);
      \draw[->,shorten >=10pt,shorten <=10pt] (domain) to
      [out=-70,in=-160] node[midway,below]{$p$} (target-p);

      \node[label=left:$\mathfrak{p}$] (p) at (-1,0) {$\bullet$};
      \node[label={[label
        distance=-.35cm]35:$\mathfrak{p}+\langle\beta\rangle$}] (pb)
      at (-1,2.5) {$\bullet$}; \draw[-] (p.north) to (pb.south);
      \node[label=left:$\mathfrak{q}$] (q) at (1,-.3) {$\bullet$};
      \node[label={[label
        distance=-.15cm]90:$\mathfrak{q}+\langle\beta\rangle$}] (qb)
      at (1,2.2) {$\bullet$}; \draw[-] (q.north) to (qb.south); \draw
      [decorate,decoration={brace,amplitude=10pt,mirror,raise=4pt},yshift=0pt]
      (4.5,-0.7) -- (4.5,3.2) node [black,midway,xshift=0.8cm] {$T$};
    \end{tikzpicture}
  }
\end{center}
Thus $g$ and $p$ are spectral maps between spectral spaces, homeomorphisms onto disjoint images which jointly make up all of~$T$. Moreover, the image of $p$ is open, and there is a common retraction $r:T\to S$ to both $g$ and~$p$.

First, the preimages of a Thomason subset $Y\subset T$ under the spectral maps $g$ and $p$ are Thomason. Moreover, every Thomason subset is closed under specializations from which one deduces $p^{-1}(Y)\subset g^{-1}(Y)$. This shows that the map from right to left is well-defined.

Next, given $\Pi\subset \Gamma\subset\spc(R)$ two Thomason subsets we claim that $p(\Pi)\sqcup g(\Gamma)$ is Thomason as well. By assumption, we may write $\Pi=\cup_{i}\Pi_{i}$, $\Gamma=\cup_{j}\Gamma_{j}$ with $\Pi_{i}^{c},\Gamma_{j}^{c}$ quasi-compact open subsets, and hence also
\begin{align*}
  \Pi=\Pi\cap \Gamma=(\cup_{i} \Pi_{i})\cap (\cup_{j} \Gamma_{j})=\cup_{i,j}(\Pi_{i}\cap \Gamma_{j})
\end{align*}
with $(\Pi_{i}\cap \Gamma_{j})^{c}=\Pi_{i}^{c}\cup \Gamma_{j}^{c}$ quasi-compact open.
Then
\begin{align*}
  p(\Pi)\sqcup g(\Gamma)=(\cup_{i,j} p(\Pi_{i}\cap \Gamma_{j}))\sqcup (\cup_{j} g(\Gamma_{j}))=\cup_{i,j}
                                        \left(
                                        p(\Pi_{i}\cap \Gamma_{j})\sqcup g(\Gamma_{j})
                                        \right)
\end{align*}
and we reduce to the case where $\Pi^{c}$ and $\Gamma^{c}$ are quasi-compact open. But in that case,
\begin{align*}
  \left(
  p(\Pi)\sqcup g(\Gamma)
  \right)^{c}=
  \left(
  p(\Gamma^{c})\sqcup g(\Gamma^{c})
  \right)\cup p(\Pi^{c})=r^{-1}(\Gamma^{c})\cup p(\Pi^{c}).
\end{align*}
Again, $r$ is a spectral map and hence the first set is quasi-compact open. The second one is quasi-compact by assumption, and also open since $p$ is a homeomorphism onto an open subset. This shows that the map from left to right is also well-defined.

It is obvious that the two maps are order preserving and inverses to each other.
\end{proof}

To state the classification result more succinctly, let us make the following definition.
\begin{dfn}
  Let $a\in\fper(R)$. For $\xi\in\{\pi,\gr\}$ set
  \begin{equation*}
    \supp_{\xi}(a):=\{\mathfrak{p}\in\spc(R)\mid \xi(a\otimes \kappa(\mathfrak{p}))\neq 0\in\per(\kappa(\mathfrak{p}))\}.
  \end{equation*}
  We extend this definition to arbitrary subsets ${\mathcal J}\subset \fper(R)$ by
  \begin{equation*}
    \supp_{\xi}({\mathcal J}):=\bigcup_{a\in{\mathcal J}}\supp_{\xi}(a).
  \end{equation*}
\end{dfn}

\begin{lem}\label{support-basic}
  Let~$a\in\fper(R)$. Then:
  \begin{enumerate}
  \item $\supp_{\gr}(a)=\{\mathfrak{p}\in\spc(R)\mid a\otimes \kappa(\mathfrak{p})\neq 0\in\fper(\kappa(\mathfrak{p}))\}$.
  \item $\supp_{\pi}(a)\subset\supp_{\gr}(a)$.
  \item $\supp_{\xi}(a)=\supp(\xi(a))$.
  \end{enumerate}
\end{lem}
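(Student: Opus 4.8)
The plan is to reduce all three assertions to a single substantial input, the conservativity of $\gr$ from \cref{gr-derived}, together with formal manipulations involving the base-change tt-functor $\fper(R)\to\fper(\kappa(\mathfrak p))$ induced by $R\to\kappa(\mathfrak p)$ (the one sending $M(n)$ to $(M\otimes_R\kappa(\mathfrak p))(n)$, through which the notation $a\otimes\kappa(\mathfrak p)$ is to be read).

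Assertion (1) is essentially a restatement of conservativity. By definition, $\mathfrak p\in\supp_\gr(a)$ means $\gr(a\otimes\kappa(\mathfrak p))\neq0$ in $\per(\kappa(\mathfrak p))$. An exact functor of triangulated categories is conservative precisely when it reflects zero objects, so by \cref{gr-derived} applied over the field $\kappa(\mathfrak p)$ we get $\gr(a\otimes\kappa(\mathfrak p))\neq0$ if and only if $a\otimes\kappa(\mathfrak p)\neq0$ in $\fper(\kappa(\mathfrak p))$, which is the claim. Assertion (2) then follows at once: if $\mathfrak p\in\supp_\pi(a)$ then $\pi(a\otimes\kappa(\mathfrak p))\neq0$, hence $a\otimes\kappa(\mathfrak p)\neq0$ since $\pi$ is additive and so preserves the zero object, and therefore $\mathfrak p\in\supp_\gr(a)$ by (1).

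For assertion (3) the first step is to check that $\xi\in\{\pi,\gr\}$ commutes with base change, i.e. that there is a natural isomorphism $\xi(a\otimes\kappa(\mathfrak p))\cong\xi(a)\otimes_R\kappa(\mathfrak p)$ in $\per(\kappa(\mathfrak p))$. Using \cref{fper-homotopy-derived} one reduces to bounded complexes of split finite projectives, on which $\pi$, $\gr$ and base change are all computed termwise, so it suffices to verify this on the generators $M(n)$ with $M$ finitely generated projective; there both sides equal $M\otimes_R\kappa(\mathfrak p)$, placed in internal degree $n$ when $\xi=\gr$. Granting this, $\supp_\xi(a)=\{\mathfrak p\in\spc(R)\mid\xi(a)\otimes_R\kappa(\mathfrak p)\neq0\}$, and the right-hand side is the standard Hopkins--Neeman--Thomason description of the support of the perfect complex $\xi(a)\in\per(R)$ under the identification $\spc(\per(R))=\spc(R)$; that is, it is $\supp(\xi(a))$.

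The only point requiring any care is the bookkeeping in (3): making explicit the base-change functor $\fper(R)\to\fper(\kappa(\mathfrak p))$ hidden in the notation $a\otimes\kappa(\mathfrak p)$ and verifying its compatibility with $\pi$ and $\gr$. I do not anticipate a genuine difficulty here; the essential content of the lemma is the conservativity of $\gr$, everything else being formal.
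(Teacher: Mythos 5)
Your proposal is correct and follows the same route as the paper: part (1) is exactly the conservativity of $\gr$ over $\kappa(\mathfrak{p})$ from \cref{gr-derived}, part (2) is the immediate consequence, and part (3) is the commutation of $\xi$ with base change combined with the residue-field description of support. The paper simply asserts the base-change compatibility $\xi(a\otimes\kappa(\mathfrak{p}))=\xi(a)\otimes\kappa(\mathfrak{p})$ as one line of a displayed computation, so your extra verification on the generators $M(n)$ is just a more careful spelling-out of the same step.
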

\begin{proof}
  \begin{enumerate}
  \item The functor $\gr:\fper(\kappa(\mathfrak{p}))\to\per(\kappa(\mathfrak{p}))$ is conservative by \cref{gr-derived}, thus the claim.
  \item This follows immediately from the first part.
  \item We have
    \begin{align*}
      \supp_{\xi}(a)&=\{\mathfrak{p}\in\spc(R)\mid \xi(a\otimes \kappa(\mathfrak{p}))\neq 0\in\per(\kappa(\mathfrak{p}))\}\\
      &=\{\mathfrak{p}\in\spc(R)\mid \xi(a)\otimes \kappa(\mathfrak{p})\neq 0\in\per(\kappa(\mathfrak{p}))\}\\
      &=\supp(\xi(a)).
    \end{align*}
  \end{enumerate}
\end{proof}
The relation to the usual support can be expressed in two (equivalent) ways.
\begin{lem}\label{support-decomposition}
  Let~$a\in\fper(R)$. Then
  \begin{enumerate}
  \item $\supp(a)=
    \spc(\pi)(\supp_{\pi}(a))\sqcup\spc(\gr)(\supp_{\gr}(a))$.
  \item Under the bijection of \cref{fper-thomason-subsets}, $\supp(a)$ corresponds to the pair~$\supp_{\pi}(a)\subset\supp_{\gr}(a)$.
  \end{enumerate}
\end{lem}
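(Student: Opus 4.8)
The two assertions become equivalent once we understand how the closed subset $\supp(a)\subset\spc(\fper(R))$ pulls back along $\spc(\pi)$ and $\spc(\gr)$, so I would establish that first. The plan is to invoke the elementary compatibility of supports with tt-functors: for a tt-functor $F\colon\T\to\mathcal S$ and $a\in\T$ one has
\[
  \spc(F)^{-1}(\supp_{\T}(a))=\supp_{\mathcal S}(F(a)),
\]
simply because $\spc(F)(\mathfrak Q)=F^{-1}(\mathfrak Q)$ contains $a$ exactly when $\mathfrak Q$ contains $F(a)$. Applying this with $F=\xi\colon\fper(R)\to\per(R)$ for $\xi\in\{\pi,\gr\}$ and feeding in \cref{support-basic}(3) yields the key identity
\[
  \spc(\xi)^{-1}(\supp(a))=\supp(\xi(a))=\supp_{\xi}(a).
\]

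For part (1) I would then recall --- from the discussion following \cref{main-thm}, and as already used in the proof of \cref{fper-thomason-subsets} --- that $\spc(\pi)$ and $\spc(\gr)$ are homeomorphisms onto the disjoint subsets $U(\beta)$ and $Z(\beta)$, whose union is all of $\spc(\fper(R))$. Intersecting $\supp(a)$ with this partition gives $\supp(a)=\bigl(\supp(a)\cap\img(\spc(\pi))\bigr)\sqcup\bigl(\supp(a)\cap\img(\spc(\gr))\bigr)$, and by the general identity $f(f^{-1}(Y))=Y\cap\img(f)$ together with the displayed formula, $\supp(a)\cap\img(\spc(\xi))=\spc(\xi)\bigl(\spc(\xi)^{-1}(\supp(a))\bigr)=\spc(\xi)(\supp_{\xi}(a))$. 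This is precisely the claimed decomposition.

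For part (2), note that $\supp(a)$ is closed with quasi-compact open complement, hence a Thomason subset, so the bijection of \cref{fper-thomason-subsets} applies to it; its inverse sends a Thomason set $Y$ to the pair $\spc(\pi)^{-1}(Y)\subset\spc(\gr)^{-1}(Y)$, which for $Y=\supp(a)$ is $\supp_{\pi}(a)\subset\supp_{\gr}(a)$ by the displayed formula, the inclusion being \cref{support-basic}(2). (Equivalently, (2) follows from (1) by applying the forward direction of the bijection.) I do not anticipate any genuine obstacle here: the only points demanding care are the variance in the support pullback formula and the fact that the two copies of $\spc(R)$ are disjoint and jointly exhaustive --- the latter resting on the surjectivity half of \cref{main-thm} proved in the subsequent sections, but legitimately available at this point, as its use in \cref{fper-thomason-subsets} already shows.
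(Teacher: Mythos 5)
Your proposal is correct and is essentially the paper's own argument: the paper's entire proof is the identity $\spc(\xi)^{-1}(\supp(a))=\supp(\xi(a))=\supp_{\xi}(a)$ (with the second equality from \cref{support-basic}), combined with the already-established fact that $\spc(\pi)$ and $\spc(\gr)$ are homeomorphisms onto disjoint, jointly exhaustive subsets. You have merely spelled out the routine set-theoretic steps the paper leaves implicit.
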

\begin{proof}
  Both statements follow from
  \begin{equation*}
    \spc(\xi)^{-1}(\supp(a))=\supp(\xi(a))=\supp_{\xi}(a),
  \end{equation*}
  the last equality being true by \cref{support-basic}.
\end{proof}

\begin{lem}\label{tt-ideal-K}
  Let $Y\subset\spc(\fper(R))$ be a Thomason subset, corresponding to $\Pi\subset \Gamma$ under the bijection in \cref{fper-thomason-subsets}. For $a\in\fper(R)$ the following are equivalent:
  \begin{enumerate}
  \item $\supp(a)\subset Y$;
  \item $\supp_{\pi}(a)\subset \Pi$ and~$\supp_{\gr}(a)\subset \Gamma$.
  \end{enumerate}
\end{lem}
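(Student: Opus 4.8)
The plan is to combine the decomposition of supports from \cref{support-decomposition} with the explicit description of the bijection in \cref{fper-thomason-subsets}. First I would recall that $Y\subset\spc(\fper(R))$ corresponds to $\Pi\subset\Gamma$ means precisely that $Y=\spc(\pi)(\Pi)\sqcup\spc(\gr)(\Gamma)$, equivalently $\Pi=\spc(\pi)^{-1}(Y)$ and $\Gamma=\spc(\gr)^{-1}(Y)$, using that $\spc(\pi)$ and $\spc(\gr)$ are homeomorphisms onto disjoint images which together cover $T=\spc(\fper(R))$.

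For the implication (1)$\Rightarrow$(2): assuming $\supp(a)\subset Y$, I would apply $\spc(\xi)^{-1}$ for $\xi\in\{\pi,\gr\}$ to get $\spc(\xi)^{-1}(\supp(a))\subset\spc(\xi)^{-1}(Y)$. By the computation in the proof of \cref{support-decomposition}, the left-hand side equals $\supp_\xi(a)$, and by the description of the bijection the right-hand side equals $\Pi$ (for $\xi=\pi$) or $\Gamma$ (for $\xi=\gr$). This gives exactly (2).

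For the converse (2)$\Rightarrow$(1): assuming $\supp_\pi(a)\subset\Pi$ and $\supp_\gr(a)\subset\Gamma$, I would use part (1) of \cref{support-decomposition} to write $\supp(a)=\spc(\pi)(\supp_\pi(a))\sqcup\spc(\gr)(\supp_\gr(a))$. Since $\spc(\pi)$ and $\spc(\gr)$ are (in particular) monotone for inclusions, this is contained in $\spc(\pi)(\Pi)\sqcup\spc(\gr)(\Gamma)=Y$, giving (1).

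There is essentially no obstacle here; the lemma is a formal bookkeeping consequence of the two preceding lemmas. The only point requiring the slightest care is making sure the $\xi$-support sets land in the correct member of the pair $(\Pi\subset\Gamma)$ rather than the other, which is pinned down by the asymmetry in \cref{fper-thomason-subsets} (the $\pi$-part uses $\Pi$, the $\gr$-part uses $\Gamma$). Accordingly I would keep the two cases $\xi=\pi$ and $\xi=\gr$ visibly separate throughout.
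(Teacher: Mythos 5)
Your proof is correct and uses exactly the ingredients the paper invokes: the explicit description of the bijection in \cref{fper-thomason-subsets} together with \cref{support-decomposition}. The paper's own proof is a one-line "this follows immediately" from these same two facts, so your argument is simply a spelled-out version of the same approach.
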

\begin{proof}
  This follows immediately from the way $\Pi\subset \Gamma$ is associated to $Y$, together with \cref{support-decomposition}.
\end{proof}

\begin{cor}\label{classification-tt-ideals}
    There is an inclusion preserving bijection
  \begin{align*}
    \left\{\Pi\subset \Gamma\mid \Pi, \Gamma\subset\spc(R)\text{ Thomason subsets}
    \right\}&\longleftrightarrow
              \left\{
              \text{tt-ideals in }\fper(R)
              \right\} \\
     (\Pi\subset \Gamma)&\longmapsto \{a\mid \supp_{\pi}(a)\subset \Pi,\supp_{\gr}(a)\subset \Gamma\}\\
    \left(
    \supp_{\pi}({\mathcal J})\subset\supp_{\gr}({\mathcal J})
    \right)&\longmapsfrom {\mathcal J}
  \end{align*}
\end{cor}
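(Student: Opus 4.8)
The plan is to realize the asserted bijection as the composite of two bijections already (essentially) available: the general classification of tt-ideals by Thomason subsets of the spectrum, and the bijection of \cref{fper-thomason-subsets}. Recall first that $\fper(R)$ is an essentially small, idempotent complete, rigid tt-category (\cref{fmod-der-compact,fper-homotopy-derived}); in a rigid tt-category every thick tensor ideal is radical, so the classification theorem of tt-geometry applies on the nose and gives an inclusion preserving bijection between Thomason subsets $Y\subset\spc(\fper(R))$ and tt-ideals of $\fper(R)$, sending $Y$ to ${\mathcal K}_Y:=\{a\in\fper(R)\mid\supp(a)\subset Y\}$ and a tt-ideal ${\mathcal J}$ to $\supp({\mathcal J})=\bigcup_{a\in{\mathcal J}}\supp(a)$.

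Composing with \cref{fper-thomason-subsets}, a pair $(\Pi\subset\Gamma)$ of Thomason subsets of $\spc(R)$ is first sent to the Thomason subset $Y=\spc(\pi)(\Pi)\sqcup\spc(\gr)(\Gamma)$ of $\spc(\fper(R))$, and then to ${\mathcal K}_Y$. By \cref{tt-ideal-K} we have ${\mathcal K}_Y=\{a\mid\supp_{\pi}(a)\subset\Pi,\ \supp_{\gr}(a)\subset\Gamma\}$, which is exactly the left-to-right assignment in the statement. Hence that map is a well-defined bijection onto tt-ideals.

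It remains to check that the inverse is the one displayed. Given a tt-ideal ${\mathcal J}$, the classification theorem returns the Thomason subset $Y=\supp({\mathcal J})$, and \cref{fper-thomason-subsets} then attaches to it the pair $(\spc(\pi)^{-1}(Y)\subset\spc(\gr)^{-1}(Y))$. Since $\spc(\pi)^{-1}$ and $\spc(\gr)^{-1}$ commute with arbitrary unions, and since $\spc(\xi)^{-1}(\supp(a))=\supp_{\xi}(a)$ for $\xi\in\{\pi,\gr\}$ by \cref{support-basic}.(3) together with \cref{support-decomposition}, we get
\[
\spc(\xi)^{-1}\bigl(\supp({\mathcal J})\bigr)=\bigcup_{a\in{\mathcal J}}\spc(\xi)^{-1}(\supp(a))=\bigcup_{a\in{\mathcal J}}\supp_{\xi}(a)=\supp_{\xi}({\mathcal J}).
\]
Thus the pair attached to ${\mathcal J}$ is $(\supp_{\pi}({\mathcal J})\subset\supp_{\gr}({\mathcal J}))$, as claimed, and both composites are clearly inclusion preserving since all the constituent maps are.

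I do not expect a genuine obstacle here: the corollary is essentially a bookkeeping exercise built on \cref{fper-thomason-subsets,tt-ideal-K,support-decomposition} and Balmer's classification. The only two points worth making explicit are that in the rigid category $\fper(R)$ the notions ``tt-ideal'' and ``radical tt-ideal'' coincide, so that the classification theorem applies without a radicality caveat, and that the identification of \cref{fper-thomason-subsets} is compatible with arbitrary unions of Thomason subsets — which is immediate from the injectivity of $\spc(\pi)$ and $\spc(\gr)$ established in \cref{sec:main-thm}.
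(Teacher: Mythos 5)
Your argument is correct and is essentially identical to the paper's own proof: both compose Balmer's classification of tt-ideals by Thomason subsets with the bijection of \cref{fper-thomason-subsets}, using \cref{tt-ideal-K} and \cref{support-decomposition} to identify the two displayed assignments. Your explicit remarks on radicality in the rigid setting and on compatibility with unions are correct refinements of details the paper leaves implicit.
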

\begin{proof}
A bijection between Thomason subsets of $\spc(\fper(R))$ and tt-ideals in $\fper(R)$ is described in~\cite[14]{balmer:icm}. Explicitly, it is given by $Y\mapsto \{a\mid\supp(a)\subset Y\}$ and~$\supp({\mathcal J})\mapsfrom{\mathcal J}$. The Corollary follows by composing this bijection with the one of \cref{fper-thomason-subsets}, using \cref{support-decomposition} and \cref{tt-ideal-K}.
\end{proof}

\section{Central localization}
\label{sec:localization}
In this section we study several localizations of $\fper(R)$ which will allow us to catch primes (points for the tt-spectrum). In order to accommodate the different localizations we are interested in, we want to work in the following setting. Let $\A$ be a tensor category with central ring $R=\hom_{\A}(\one,\one)$, and fix an invertible object~$u\in\A$. Most of the discussion in~\cite[section 3]{balmer:sss} regarding graded homomorphisms and central localization carries over to our setting. Let us recall what we will need from~\loccit

The graded central ring of $\A$ with respect to $u$ is~$\R^{\bullet}=\hom_{\A}(\one,u^{\otimes\bullet})$. This is a $\Z$-graded $\varepsilon$-commutative ring, where $\varepsilon\in R$ is the central switch for $u$, \ie the switch $u\otimes u\xrightarrow{\sim}u\otimes u$ is given by multiplication by~$\varepsilon$. For any objects $a,b\in\A$, the $\Z$-graded abelian group $\hom_{\A}^{\bullet}(a,b)=\hom_{\A}(a,b\otimes u^{\otimes \bullet})$ has the structure of a graded $\R^{\bullet}$-module in a natural way.

Let $S\subset \R^{\bullet}$ be a multiplicative subset of central homogeneous elements. The central localization $S^{-1}\A$ of $\A$ with respect to $S$ is obtained as follows: it has the same objects as $\A$, and for $a,b\in\A$,
\begin{equation*}
  \hom_{S^{-1}\A}(a,b)=
  \left(
    S^{-1}\hom_{\A}^{\bullet}(a,b)
  \right)^{0},
  \end{equation*}
  the degree 0 elements in the graded localization.

We now prove that this is in fact a categorical localization.
\begin{pro}\label{central-categorical-localization}
  The canonical functor ${\mathcal Q}:\A\to S^{-1}\A$ is the localization with respect to
  \begin{equation*}
    \Sigma=\{a\xrightarrow{s} a\otimes u^{\otimes n}\mid a\in\A, s\in S, |s|=n\}.
  \end{equation*}
  Moreover, $S^{-1}\A$ has a canonical structure of a tensor category, and ${\mathcal Q}$ is a tensor functor.
\end{pro}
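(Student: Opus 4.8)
The plan is to verify the universal property of the localization directly, and then to transport the tensor structure along it. First I would check that ${\mathcal Q}$ inverts every morphism in $\Sigma$: given $s\colon a\to a\otimes u^{\otimes n}$ with $s\in S$ homogeneous of degree $n$, the element $s^{-1}\in S^{-1}\hom_{\A}^{\bullet}(a\otimes u^{\otimes n},a)$ has degree $-n$, hence lies in degree $0$ and defines a morphism $a\otimes u^{\otimes n}\to a$ in $S^{-1}\A$; the relations $s\cdot s^{-1}=1$, $s^{-1}\cdot s=1$ in the graded localization say exactly that this is a two-sided inverse to ${\mathcal Q}(s)$. Conversely, suppose $F\colon\A\to{\mathcal B}$ is any functor inverting all of $\Sigma$. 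I would construct $\bar F\colon S^{-1}\A\to{\mathcal B}$ on objects by $\bar F(a)=F(a)$, and on a morphism $\phi/s\in\hom_{S^{-1}\A}(a,b)$, represented by $\phi\colon a\to b\otimes u^{\otimes(n+|s|)}$ and $s\colon b\to b\otimes u^{\otimes|s|}$ (so the class has degree $0$ after dividing), by $\bar F(\phi/s)=F(b\otimes s)^{-1}\circ F(\phi)$ up to the canonical identifications $F(c\otimes u^{\otimes k})$ coming from the fact that the morphism $c\xrightarrow{\mathrm{id}}c\otimes u^{\otimes 0}$ lies in $\Sigma$ when composed appropriately — more precisely, I would first record that for invertible $u$ the "multiplication by a homogeneous unit of $S$" morphisms let us canonically and compatibly identify $F(a)\cong F(a\otimes u^{\otimes n})$ for any $a$ once we have chosen how to deal with $u$ itself; the cleanest route is to observe $u$ need not be inverted but the specific maps in $\Sigma$ are, and a degree-$0$ fraction is manifestly independent of such twists. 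Well-definedness (independence of the representative $\phi/s$) and functoriality are then a routine check using that $S$ consists of central elements, so that the Ore-type calculus of fractions goes through symmetrically; uniqueness of $\bar F$ is immediate since ${\mathcal Q}$ is the identity on objects and surjective on morphisms up to the inverted maps.

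For the tensor structure, I would define $\otimes$ on $S^{-1}\A$ to agree with that of $\A$ on objects, and on morphisms use that centrality of $S$ makes $S^{-1}\hom_\A^\bullet(-,-)$ a lax-monoidal assignment: given $\phi/s\colon a\to b$ and $\psi/t\colon c\to d$, set $(\phi/s)\otimes(\psi/t)=(\phi\otimes\psi)/(s\otimes t)$, after checking this lands in degree $0$ and is well-defined. Associativity, unit and symmetry constraints are inherited from $\A$ (their images under ${\mathcal Q}$ are already isomorphisms, being images of isomorphisms), and bifunctoriality of $\otimes$ on $S^{-1}\A$ reduces to the interchange law in $\A$ together with the fact that multiplication by central elements commutes with everything. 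That ${\mathcal Q}$ is a tensor functor is then tautological: it is the identity on objects and sends $\phi$ to $\phi/1$, which is compatible with $\otimes$ and with the unit by construction. Additivity of $S^{-1}\A$ and of the monoidal product in each variable follows because $S^{-1}(-)$ is an exact (in particular additive) operation on the $\Z$-graded $\R^{\bullet}$-modules $\hom^\bullet_\A(a,b)$, and biproducts in $\A$ remain biproducts after applying ${\mathcal Q}$.

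The main obstacle I anticipate is bookkeeping the twists by $u^{\otimes n}$ coherently when verifying the universal property — one must make sure the chosen identifications $F(a)\cong F(a\otimes u^{\otimes n})$ used to define $\bar F$ on fractions are natural and compatible with composition, so that $\bar F$ is genuinely a functor and not just a map on hom-sets. I expect this to be handled by noting that for any $a$ and any $s\in S$ of degree $n$, the morphism $s\colon a\to a\otimes u^{\otimes n}$ is in $\Sigma$, so $F(s)$ is invertible, and any two such choices differ by multiplication by a unit in the (commutative) ring $S^{-1}\R^{\bullet}$ acting centrally, hence the resulting identification of $\bar F(\phi/s)$ is unambiguous; a degree count confirms that only degree-$0$ combinations ever appear. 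Everything else — independence of representatives, the Ore conditions, compatibility of $\otimes$ with the localization — is routine given that $S$ is central, which is exactly the hypothesis that makes the graded calculus of fractions two-sided.
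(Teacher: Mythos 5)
Your plan is correct, and in substance it rests on the same pivot as the paper's proof — namely that centrality of $S$ gives $\Sigma$ a two-sided calculus of fractions — but it is packaged differently. The paper does not verify the universal property of $S^{-1}\A$ by hand: it observes that $S^{-1}\A$ visibly inverts $\Sigma$, hence receives a canonical functor $F$ from the Gabriel--Zisman localization $\Sigma^{-1}\A$, and then checks that $F$ is an equivalence (essential surjectivity is trivial since the objects agree; fully faithfulness follows by comparing the fraction description of $\hom_{\Sigma^{-1}\A}$ with the degree-$0$ part of the graded module localization). The additive and monoidal structures are then imported by citation, from Gabriel--Zisman for additivity of $\Sigma^{-1}\A$ and from Day's monoidal localization for the tensor product, rather than being constructed on fractions as you propose. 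Your route buys self-containedness at the price of the coherence bookkeeping you rightly identify as the main burden: well-definedness of $\bar F(\phi/s)=F(s_b)^{-1}\circ F(\phi)$ and of $(\phi/s)\otimes(\psi/t)=(\phi\otimes\psi)/(st)$, including the $\varepsilon$-signs from shuffling the $u$-factors. The paper's route buys brevity but leaves the identification of the two hom-sets as the one real check. Two small notational slips in your write-up, neither fatal: the inverse of ${\mathcal Q}(s)$ is the degree-$0$ fraction $\mathrm{id}/s$ (numerator and denominator both of degree $n$), not an element ``of degree $-n$ lying in degree $0$''; and a degree-$0$ fraction $\phi/s\colon a\to b$ has $\phi\colon a\to b\otimes u^{\otimes |s|}$, i.e.\ $|\phi|=|s|$, not $|\phi|=n+|s|$.
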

\begin{proof}
    Denote by ${\mathcal Q}'$ the localization functor~$\A\to\Sigma^{-1}\A$. It is clear by construction that every morphism in $\Sigma$ is inverted in $S^{-1}\A$ thus ${\mathcal Q}$ factors through ${\mathcal Q}'$, say via the functor~$F:\Sigma^{-1}\A\to S^{-1}\A$. The functor $F$ is clearly essentially surjective. And fully faithfulness follows readily from the fact, easy to verify, that $\Sigma$ admits a calculus of left (and right) fractions~\cite[2.2]{gabriel-zisman:localization}.

    The fact that $\Sigma^{-1}\A$ is an additive category and ${\mathcal Q}'$ an additive functor is~\cite[3.3]{gabriel-zisman:localization}, and the analogous statement about the monoidal structure is proven in~\cite{day:monoidal-localization}. The monoidal product in $\Sigma^{-1}\A$ is automatically additive in each variable.
\end{proof}

Consider the homotopy category $\K{\A}^{b}$ of~$\A$. This is a tt-category (large if $\A$ is) with the same graded central ring $\R^{\bullet}$ (with respect to $u$ considered in degree 0).
\begin{lem}\label{localization-homotopy}
  There is a canonical equivalence of tt-categories $S^{-1}\K{\A}^{b}\simeq \K{S^{-1}\A}^{b}$, and both are equal to the Verdier localization of $\K{\A}^{b}$ with kernel~$\langle\cone(s)\mid s\in S\rangle$.
\end{lem}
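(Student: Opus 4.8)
The plan is to combine the universal property of central localization from \cref{central-categorical-localization} with the description of Verdier quotients as localizations, and then to check that central localization commutes with the passage to bounded homotopy categories.

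First I would record, from \cref{central-categorical-localization}, that $S^{-1}\K{\A}^{b}$ is the Gabriel--Zisman localization of the triangulated category $\K{\A}^{b}$ at $\Sigma=\{a\xrightarrow{s}a\otimes u^{\otimes n}\mid a\in\K{\A}^{b},\ s\in S,\ |s|=n\}$, and that ${\mathcal Q}\colon\K{\A}^{b}\to S^{-1}\K{\A}^{b}$ is a tensor functor. Writing $s\colon a\to a\otimes u^{\otimes n}$ as $a\otimes\bar s$ with $\bar s\in\hom_{\A}(\one,u^{\otimes n})$ (using centrality of $s$), the cone of each morphism in $\Sigma$ is $\cone(a\otimes\bar s)=a\otimes\cone(\bar s)$; as $a$ ranges over $\K{\A}^{b}$ and $s$ over $S$, the thick subcategory these objects generate is automatically a tt-ideal, namely the smallest one containing all $\cone(s)$, \ie ${\mathcal J}=\langle\cone(s)\mid s\in S\rangle$. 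Since $\Sigma$ is stable under the shift of $\K{\A}^{b}$, admits a calculus of left and right fractions (verified in the proof of \cref{central-categorical-localization}), and interacts well with cones because it is defined by tensoring with fixed morphisms, the central-localization theory of tt-categories (\cite[\S3]{balmer:sss}, in its $u$-graded form) applies: $S^{-1}\K{\A}^{b}$ carries a unique triangulated structure making ${\mathcal Q}$ exact, and ${\mathcal Q}$ realises it as a Verdier quotient of $\K{\A}^{b}$. Finally I would identify the kernel: ${\mathcal Q}(\bar s)$ is invertible, so ${\mathcal Q}(\cone(s))=\cone({\mathcal Q}(\bar s))=0$ and ${\mathcal J}\subseteq\ker{\mathcal Q}$; conversely the quotient functor $\K{\A}^{b}\to\K{\A}^{b}/{\mathcal J}$ is an exact tensor functor inverting $\Sigma$, hence factors through ${\mathcal Q}$, and the two resulting tensor functors are mutually inverse by the respective universal properties. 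This gives $S^{-1}\K{\A}^{b}=\K{\A}^{b}/{\mathcal J}$ as tt-categories.

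Next I would compare $S^{-1}\K{\A}^{b}$ with $\K{S^{-1}\A}^{b}$. Applying ${\mathcal Q}_{\A}\colon\A\to S^{-1}\A$ termwise gives an exact tensor functor $\K{\A}^{b}\to\K{S^{-1}\A}^{b}$ that inverts $\Sigma$ (each $\bar s$ is already invertible in $S^{-1}\A$), hence a tensor functor $G\colon S^{-1}\K{\A}^{b}\to\K{S^{-1}\A}^{b}$. It is essentially surjective: a bounded complex over $S^{-1}\A$ has only finitely many differentials, each of which becomes a morphism of $\A$ after twisting by a suitable power of the $\otimes$-invertible object $u$, so after twisting its terms accordingly it becomes isomorphic in $\K{S^{-1}\A}^{b}$ to the image of a bounded complex over $\A$. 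It is fully faithful because $S^{-1}(-)$ is exact while the $\hom$-groups of a bounded homotopy category are built from those of the underlying additive category by finitely many products and a single cohomology group, so that for bounded complexes $X,Y$ over $\A$
\[
\hom_{\K{S^{-1}\A}^{b}}(X,Y)=\bigl(S^{-1}\hom^{\bullet}_{\K{\A}^{b}}(X,Y)\bigr)^{0}=\hom_{S^{-1}\K{\A}^{b}}(X,Y),
\]
compatibly with composition and tensor product. Hence $G$ is an equivalence of tt-categories, and together with the previous paragraph all three categories coincide.

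The step I expect to be the main obstacle is the assertion that formally inverting $\Sigma$ keeps $\K{\A}^{b}$ triangulated with ${\mathcal Q}$ exact; this is exactly the point at which the discussion of \cite{balmer:sss} gets extended from the central ring to the $u$-twisted graded central ring. Once this is in hand, identifying $\ker{\mathcal Q}$ with ${\mathcal J}$ and the comparison with $\K{S^{-1}\A}^{b}$ are formal.
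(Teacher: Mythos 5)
Your proof is correct and follows essentially the same route as the paper: full faithfulness of the comparison with $\K{S^{-1}\A}^{b}$ via the ``common denominator'' argument available for bounded complexes, the universal property of the central localization from \cref{central-categorical-localization}, and Balmer's identification of the central localization of a tt-category with the Verdier quotient by $\langle\cone(s)\mid s\in S\rangle$ (\cite[3.6]{balmer:sss}), which the paper simply cites and you partially re-derive. The only cosmetic difference is that the paper first compares the chain-complex categories $\C{\A}^{b}$ and $\C{S^{-1}\A}^{b}$ and then descends to homotopy categories, whereas you work directly with the hom-complexes of $\K{-}^{b}$.
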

\begin{proof}
  The first statement can be shown in two steps. First, consider the category of chain complexes $\C{\A}^{b}$ and the canonical functor~$\C{\A}^{b}\to \C{S^{-1}\A}^{b}$. By \cref{central-categorical-localization}, it factors through $S^{-1}\C{\A}^{b}\to \C{S^{-1}\A}^{b}$; fully faithfulness and essential surjectivity of this functor is an easy exercise using the explicit nature of the central localization. (The point is that for bounded complexes there are always only finitely many morphisms involved thus the possibility of finding a ``common denominator''.)

  Next, since $\C{-}^{b}\to \K{-}^{b}$ is a categorical localization (with respect to chain homotopy equivalences), \cref{central-categorical-localization} easily implies the claim.

  Compatibility with the tt-structure is also straightforward. The second statement follows from~\cite[3.6]{balmer:sss}.
\end{proof}

We want to draw two consequences from this discussion. For the first one, denote by $\md(R)$ the tensor category of rigid objects in $\Md(R)$, \ie the category of finitely generated projective $R$-modules. We let $\A=\fmd(R)$ and as the invertible object $u$ we choose $R(1)$ so that $\R^{\bullet}=R[\beta]$.
\begin{lem}\label{localization-beta-fmod}
  The functor $\pi:\fmd(R)\to\md(R)$ is the central localization at the multiplicative set~$\{\beta^{n}\mid n\geq 0\}\subset R[\beta]$.
\end{lem}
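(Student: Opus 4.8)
The plan is to identify both sides as the same localization of the homotopy-theoretic ambient category, but in fact here we may work directly at the level of additive categories since $\fmd(R)$ and $\md(R)$ are $1$-categorical. First I would recall the explicit description of the central localization $S^{-1}\fmd(R)$ with $S=\{\beta^{n}\mid n\geq 0\}$: it has the same objects as $\fmd(R)$, namely the split finite projectives $\oplus_{i}M_{i}(n_{i})$, and
\begin{equation*}
  \hom_{S^{-1}\fmd(R)}(a,b)=\bigl(S^{-1}\hom^{\bullet}_{\fmd(R)}(a,b)\bigr)^{0},
\end{equation*}
where $\hom^{\bullet}_{\fmd(R)}(a,b)=\hom_{\fmd(R)}(a,b(\bullet))$ is the graded $R[\beta]$-module built from the twists. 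The key computation, already essentially carried out in the text right after the statement of \cref{main-thm}, is that for $R$-modules $M,N$ one has $\hom_{\fmd(R)}(M(m),N(n))=\hom_{R}(M,N)$ if $m\leq n$ and $0$ otherwise, with $\beta$ acting by the transition maps. Inverting $\beta$ therefore collapses the grading: $\bigl(S^{-1}\hom^{\bullet}_{\fmd(R)}(M(m),N(n))\bigr)^{0}=\hom_{R}(M,N)$, independently of $m$ and $n$. This is exactly $\hom_{\md(R)}(\pi(M(m)),\pi(N(n)))=\hom_{R}(M,N)$, so $\pi$ induces an isomorphism on Hom-groups after localization.

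Next I would check essential surjectivity and that the comparison functor is precisely $\pi$ up to the identification of objects. Every finitely generated projective $R$-module is $\pi$ of the split finite projective $M(0)$, so $\pi$ is essentially surjective; and by the universal property of central localization (\cref{central-categorical-localization}), since $\pi$ sends each $\beta\colon a\to a(1)$ to an isomorphism (it is the identity on underlying modules), $\pi$ factors through a unique tensor functor $\bar\pi\colon S^{-1}\fmd(R)\to\md(R)$. The Hom-computation of the previous paragraph shows $\bar\pi$ is fully faithful, and essential surjectivity of $\pi$ gives essential surjectivity of $\bar\pi$; hence $\bar\pi$ is an equivalence of tensor categories, which is the assertion. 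One should also remark, though it follows formally, that the tensor structure is matched: $\pi$ is a tensor functor by \cref{pi-derived} and the localized tensor product on $S^{-1}\fmd(R)$ is the one provided by \cref{central-categorical-localization}, so $\bar\pi$ is automatically monoidal.

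I expect the only mild subtlety — not really an obstacle — to be bookkeeping the direct sums: a general object is $\oplus_{i}M_{i}(n_{i})$ with finitely many summands, so the Hom-groups are finite direct sums of the $\hom_{R}(M_{i},N_{j})$ placed in various degrees, and one must see that inverting $\beta$ and taking degree $0$ commutes with these finite sums and yields $\oplus_{i,j}\hom_{R}(M_{i},N_{j})=\hom_{\md(R)}(\oplus_{i}M_{i},\oplus_{j}N_{j})$. Since everything in sight is a finite direct sum this is immediate, and it is exactly the ``common denominator'' phenomenon already invoked in the proof of \cref{localization-homotopy}. Everything else is the routine unwinding of definitions.
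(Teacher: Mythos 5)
Your proposal is correct and follows essentially the same route as the paper: both arguments verify that $\pi$ induces an isomorphism on the hom-groups of the central localization (full faithfulness) and note that essential surjectivity is clear. The only cosmetic difference is that you reduce to the building blocks $M(m)$, $N(n)$ by additivity and compute the degree-zero part of the graded localization explicitly, whereas the paper works with the colimit description $\varinjlim_{n}\hom_{\fmd(R)}(a(-n),b)$ for general $a,b$, proving injectivity via the fact that $\beta$ is an epimorphism and surjectivity via a weight-boundedness argument.
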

\begin{proof}
  Consider the set of arrows $\Sigma=\{\beta^{n}:a\to a(n)\mid a\in \fmd(R), n\geq 0\}$. By \cref{central-categorical-localization}, the central localization in the statement of the Lemma is the localization at~$\Sigma$. We have $\Sigma^{-1}\fmd(R)(a,b)=\varinjlim_{n}\hom_{\fmd(R)}(a(-n),b)$. At each level $n$, this maps injectively into $\hom_{\md(R)}(\pi a,\pi b)$, and the transition maps $f\mapsto f\circ\beta$ are injective as well since $\beta$ is an epimorphism, hence the induced map
  \begin{equation*}
    \varinjlim_{n}\hom_{\fmd(R)}(a(-n),b)\to\hom_{\md(R)}(\pi{a},\pi{b})
  \end{equation*}
  is injective. For surjectivity, we may assume $a,b\in \fmd(R)$ are of ``weight in $[m,n]$'', \ie $m\leq n$ and $\gr_{i}(a)=\gr_{i}(b)=0$ for all~$i\notin [m,n]$. In that case $f:\pi a\to\pi b$ comes from a map~$f:a(m-n)\to b$.

  We have proved that $\pi:\Sigma^{-1}\fmd(R)\to\md(R)$ is fully faithful. Essential surjectivity is clear. 
\end{proof}

\begin{cor}\label{localization-beta-derived-fmod}
  The functor $\pi:\fper(R)\to\per(R)$ is the Verdier localization at the morphisms $\beta:A\to A(1)$, every~$A\in\fper(R)$. In particular, $\ker(\pi)=\langle\cone(\beta)\rangle$.
\end{cor}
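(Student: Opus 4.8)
The plan is to bootstrap from the underived statement \cref{localization-beta-fmod} using the machinery assembled in \cref{localization-homotopy} and the presentation of $\fper(R)$ as a bounded homotopy category from \cref{fper-homotopy-derived}. Concretely, I would take $\A=\fmd(R)$ with invertible object $u=R(1)$, so that $\R^{\bullet}=R[\beta]$, and let $S=\{\beta^{n}\mid n\geq 0\}$ be the multiplicative set of \cref{localization-beta-fmod}. That lemma identifies the central localization $S^{-1}\fmd(R)$ with $\md(R)$ in a way compatible with the tensor structure and with $\pi$. Then \cref{localization-homotopy} applies verbatim: it gives a canonical equivalence of tt-categories
\begin{equation*}
  S^{-1}\K{\fmd(R)}^{b}\simeq\K{S^{-1}\fmd(R)}^{b}=\K{\md(R)}^{b},
\end{equation*}
and identifies both with the Verdier localization of $\K{\fmd(R)}^{b}$ at the thick subcategory $\langle\cone(s)\mid s\in S\rangle=\langle\cone(\beta^{n})\mid n\geq 0\rangle$.

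Next I would reconcile this with the derived picture. By \cref{fper-homotopy-derived} the canonical functor induces a tt-equivalence $\K{\fmd(R)}^{b}\xrightarrow{\sim}\fper(R)$, and by the classical identification $\K{\md(R)}^{b}\xrightarrow{\sim}\per(R)$ the target likewise matches. Under these equivalences the functor $\pi$ on homotopy categories corresponds to the derived $\pi:\fper(R)\to\per(R)$ of \cref{pi-derived} — this is exactly the last paragraph of the proof of \cref{fper-homotopy-derived}, where $\pi$ is checked to be a tt-functor on $\K{\fmd(R)}^{b}$ computing the derived functor. Combining the previous paragraph with these identifications, $\pi:\fper(R)\to\per(R)$ is the Verdier localization at $\langle\cone(\beta^{n})\mid n\geq 0\rangle$, and the set $\Sigma=\{\beta:A\to A(1)\mid A\in\fper(R)\}$ of the statement generates the same thick subcategory of cones: indeed $\cone(\beta^{n})$ is built from $\cone(\beta)$ by the octahedral axiom (using $\beta^{n}=\beta^{n-1}\circ\beta$ up to the twist), and conversely $\cone(\beta)=\cone(\beta^{1})$, so $\langle\cone(\beta)\rangle=\langle\cone(\beta^{n})\mid n\geq 0\rangle$. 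Hence $\ker(\pi)=\langle\cone(\beta)\rangle$, which is the second assertion.

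I expect the only real subtlety to be bookkeeping the twists: the morphism $\beta:A\to A(1)$ is not an endomorphism, so "$\cone(\beta)$" and the powers $\beta^{n}:A\to A(n)$ must be handled via the graded-hom conventions of \cref{sec:localization} rather than as literal composites in a fixed hom-set; but \cref{central-categorical-localization} already packages exactly this, so no new argument is needed. A secondary point to state carefully is that $\langle\cone(\beta)\rangle$ as a thick subcategory automatically equals the tt-ideal it generates here, since tensoring with the invertible $R(1)$ preserves it and by \cref{fper-homotopy-derived} the twists $R(n)$ thickly generate $\fper(R)$ — but this is immediate and could even be omitted, as the statement only claims $\ker(\pi)=\langle\cone(\beta)\rangle$ as the Verdier kernel. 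Everything else is a direct transport of \cref{localization-homotopy} along the equivalences already in hand.
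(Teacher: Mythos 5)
Your proposal is correct and follows essentially the same route as the paper: apply \cref{localization-beta-fmod} to get $S^{-1}\fmd(R)=\md(R)$ for $S=\{\beta^{n}\}$, pass to bounded homotopy categories via \cref{localization-homotopy} and \cref{fper-homotopy-derived} to identify $\pi:\fper(R)\to\per(R)$ with the Verdier localization of kernel $\langle\cone(\beta^{n})\mid n\geq 0\rangle$, and then collapse that to $\langle\cone(\beta)\rangle$. The only cosmetic difference is that you justify the last equality with an explicit octahedron argument, where the paper simply cites \cite[2.16]{balmer:sss}, which encapsulates the same fact.
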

\begin{proof}
  Let $S=\{\beta^{n}\}\subset R[\beta]$. We know from \cref{localization-beta-fmod} that $S^{-1}\fmd(R)=\md(R)$ hence also $S^{-1}\fper(R)=\per(R)$, by \cref{localization-homotopy}, and this is the Verdier localization with kernel $\langle\cone(\beta^{n})\mid n\geq 0\rangle$. The latter tt-ideal is equal to $\langle\cone(\beta)\rangle$ by~\cite[2.16]{balmer:sss} and we conclude.
\end{proof}

Still in the same context let $\mathfrak{p}\subset R$ be a prime ideal. Denote by $q:R\to R_{\mathfrak{p}}$ the canonical localization morphism, and set~$S=R\backslash \mathfrak{p}$.
\begin{lem}\label{localization-fmod}
  The morphism $q$ induces an equivalence of tensor categories
  \begin{equation*}
    S^{-1}\fmd(R)\simeq\fmd(R_{\mathfrak{p}}).
  \end{equation*}
\end{lem}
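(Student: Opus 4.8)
The plan is to build the equivalence directly on objects and morphisms, mirroring the proof of \cref{localization-beta-fmod}. First I would observe that $S = R \setminus \mathfrak{p}$ sits in degree $0$ of the graded central ring $R[\beta]$, so by \cref{central-categorical-localization} the central localization $S^{-1}\fmd(R)$ is the ordinary localization of the category $\fmd(R)$ at the arrows $\Sigma = \{ s \cdot \id_a : a \to a \mid a \in \fmd(R),\ s \in S\}$; since these arrows have degree $0$ there is no twist to keep track of. The functor $q_*\colon \fmd(R) \to \fmd(R_{\mathfrak p})$ induced by base change along $q\colon R \to R_{\mathfrak p}$ (explicitly, $\oplus_n M_n(n) \mapsto \oplus_n (M_n \otimes_R R_{\mathfrak p})(n)$) is a tensor functor inverting every arrow in $\Sigma$, hence factors through a tensor functor $S^{-1}\fmd(R) \to \fmd(R_{\mathfrak p})$, and the task is to show this is an equivalence.

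Next I would compute hom-sets. Because $S$ admits a calculus of fractions, $\hom_{S^{-1}\fmd(R)}(a,b) = \varinjlim_{s \in S} \hom_{\fmd(R)}(a,b)$ where the colimit is over multiplication-by-$s$ maps on the target; since $\hom_{\fmd(R)}(a,b)$ is a finitely generated $R$-module (a finite sum of the $R$-modules $\hom_R(\gr_i a, \gr_j b)$ over the finitely many relevant indices, by \cref{fper-homotopy-derived} and the explicit description of morphisms in $\fmd(R)$), this colimit is simply $S^{-1}\hom_{\fmd(R)}(a,b) = \hom_{\fmd(R)}(a,b) \otimes_R R_{\mathfrak p}$. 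On the other side, a morphism $a \to b$ in $\fmd(R_{\mathfrak p})$ is a filtration-compatible $R_{\mathfrak p}$-linear map, and since localization is exact and commutes with the relevant finite limits, $\hom_{\fmd(R_{\mathfrak p})}(q_* a, q_* b) = \hom_{\fmd(R)}(a,b) \otimes_R R_{\mathfrak p}$ as well. Comparing, the induced functor is fully faithful. For essential surjectivity: every object of $\fmd(R_{\mathfrak p})$ has the form $\oplus_n P_n(n)$ with $\oplus_n P_n$ a finitely generated projective $R_{\mathfrak p}$-module; choosing idempotent matrices over $R_{\mathfrak p}$ presenting each $P_n$, clearing denominators gives idempotents over $R_s$ for a single $s \in S$, hence a split finite projective over $R$ whose image under $q_*$ recovers the given object up to isomorphism. (Alternatively, one invokes that finitely generated projectives over $R_{\mathfrak p}$ are free, which makes essential surjectivity immediate since $R_{\mathfrak p}(n)$ is manifestly in the image.) Compatibility with the tensor structure is then clear, as both tensor products are computed by the same formula and $q_*$ commutes with $\otimes_R$ and with the reflector $\kappa$ up to the identifications already in place.

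The main obstacle I anticipate is the bookkeeping in the fully faithfulness step: one must check that the colimit over $\Sigma$ genuinely agrees with the degree-$0$ part of the graded localization (i.e., that no interaction with the $\beta$-grading sneaks in), and that localizing the hom-module of $\fmd(R)$ at $\mathfrak p$ agrees with the hom-module computed in $\fmd(R_{\mathfrak p})$ — this amounts to knowing that forming $\gr_\bullet$, passing to $\hom_R$ between finitely generated projectives, and taking the finite intersection/kernel conditions encoding filtration-compatibility all commute with the flat base change $R \to R_{\mathfrak p}$. All of these are standard (flatness plus finite generation), so once the identifications are set up carefully the argument is routine; the only genuinely new input beyond \cref{localization-beta-fmod} is that $S$ lands in degree $0$, which trivializes the twists.
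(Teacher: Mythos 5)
Your proposal is correct and follows essentially the same route as the paper: the paper also defines the comparison functor as $\otimes_R R_{\mathfrak p}$, gets essential surjectivity from the freeness of finitely generated projectives over the local ring $R_{\mathfrak p}$, and proves full faithfulness by identifying $S^{-1}\hom_{\fmd(R)}(a,b)$ with $\hom_{\fmd(R_{\mathfrak p})}(a\otimes_R R_{\mathfrak p}, b\otimes_R R_{\mathfrak p})$ (reducing by additivity to twists $R(m)$, $R(n)$, where both sides are $S^{-1}R=R_{\mathfrak p}$ for $n\geq m$ and $0$ otherwise). Your flat-base-change formulation of the hom comparison is a mild generalization of the same computation, and your remark that $S$ lives in degree $0$ so no twists intervene matches the paper's implicit use of \cref{central-categorical-localization}.
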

\begin{proof}
  The functor $S^{-1}\fmd(R)\to\fmd(R_{\mathfrak{p}})$ is given by~$\otimes_{R}R_{\mathfrak{p}}$. This is clearly a tensor functor. Since $R_{\mathfrak{p}}$ is local every finitely generated projective $R_{\mathfrak{p}}$-module is free thus $\otimes_{R}R_{\mathfrak{p}}$ is essentially surjective. For full faithfulness notice that $\otimes_{R}R_{\mathfrak{p}}$ is additive and one therefore reduces to check this for twists of $R$:
  \begin{align*}
S^{-1}\hom_{\fmd(R)}(R(m),R(n))&=
                                 \begin{cases}
                                   S^{-1}R&:n\geq m\\
                                   0&:n<m
                                 \end{cases}\\
    &=
                                 \begin{cases}
                                   R_{\mathfrak{p}}&:n\geq m\\
                                   0&:n<m
                                 \end{cases}\\
&=\hom_{\fmd(R_{\mathfrak{p}})}(R_{\mathfrak{p}}(m),R_{\mathfrak{p}}(n)).
  \end{align*}
\end{proof}

\begin{cor}\label{localization-fmod-derived}
  The square of topological spaces
  \begin{equation*}
        \xymatrix{\spc(\fper(R))\ar[d]_{\rho_{R}}&\spc(\fper(R_{\mathfrak{p}}))\ar[l]_{\spc(q)}\ar[d]^{\rho_{R_{\mathfrak{p}}}}\\
      \spc(R)&\spc(R_{\mathfrak{p}})\ar[l]^{\spc(q)}}
  \end{equation*}
  is cartesian.
\end{cor}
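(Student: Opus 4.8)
The plan is to deduce this corollary from the description of $\pi$ as a central localization obtained in the preceding lemmas, combined with the general behaviour of tt-spectra under central localization from \cite[section 3]{balmer:sss}.

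First I would recall that for a prime $\mathfrak{p}\subset R$, \cref{localization-fmod} together with \cref{localization-homotopy} identifies $\fper(R_{\mathfrak{p}})$ with the central localization $S^{-1}\fper(R)$ at $S=R\backslash\mathfrak{p}\subset R[\beta]$ (viewing $S$ as homogeneous elements of degree $0$), equivalently the Verdier localization of $\fper(R)$ at the cones $\cone(s)$, $s\in S$. By \cite[3.6]{balmer:sss} (or the general theory of Verdier localization and supports), the induced map $\spc(q):\spc(\fper(R_{\mathfrak{p}}))\to\spc(\fper(R))$ is a homeomorphism onto the subspace
\begin{equation*}
  \{\mathfrak{P}\in\spc(\fper(R))\mid \cone(s)\in\mathfrak{P}\text{ for all }s\in S\} = \bigcap_{s\in S}\supp(\cone(s))^{c}\cup\cdots
\end{equation*}
— more precisely onto $\{\mathfrak{P}\mid S\cap (R\backslash\rho_{R}^{\bullet}(\mathfrak{P}))=\cdots\}$; I will phrase this via the comparison map as the preimage of the corresponding subspace of $\spc(R)$. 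Concretely, since $\rho_{R}=(R\cap-)\circ\rho_{R}^{\bullet}$ composed with the identification $\spech(R[t,t^{-1}])\cong\spc(R)$ etc., one checks that $\spc(q)$ identifies $\spc(\fper(R_{\mathfrak{p}}))$ with $\rho_{R}^{-1}(\spc(R_{\mathfrak{p}}))$, where $\spc(R_{\mathfrak{p}})\hookrightarrow\spc(R)$ is the usual (spectral, not open in general) embedding as $\{\mathfrak{q}\mid\mathfrak{q}\subset\mathfrak{p}\}$. The same statement for the classical (non-filtered) central localization $\per(R_{\mathfrak{p}})=S^{-1}\per(R)$ is exactly \cite[section 5]{balmer:sss} (the spectrum of a central localization is the corresponding subspace of $\spc(R)$), and it is compatible with $\rho$.

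The key step is then to assemble these two facts into the cartesian square. I would argue as follows: the square commutes because $\pi$ is compatible with $\otimes_{R}R_{\mathfrak{p}}$ on both the filtered and unfiltered sides (the localization functors are tensor functors over the respective central rings, and $q$ is a ring map). To check the universal property, let $T$ be a space with maps to $\spc(\fper(R))$ and $\spc(R_{\mathfrak{p}})$ agreeing in $\spc(R)$; the map to $\spc(\fper(R))$ lands in $\rho_{R}^{-1}(\img(\spc(q):\spc(R_{\mathfrak{p}})\to\spc(R)))$ by the agreement condition, and by the previous paragraph this preimage is precisely the image of the homeomorphism $\spc(q):\spc(\fper(R_{\mathfrak{p}}))\hookrightarrow\spc(\fper(R))$; hence the map factors uniquely through $\spc(\fper(R_{\mathfrak{p}}))$, and one verifies this factorization is compatible with the map to $\spc(R_{\mathfrak{p}})$ using that $\rho_{R_{\mathfrak{p}}}$ is, by construction, the restriction of $\rho_{R}$. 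Equivalently and more cleanly, I would just verify set-theoretically that the canonical comparison map $\spc(\fper(R_{\mathfrak{p}}))\to\spc(\fper(R))\times_{\spc(R)}\spc(R_{\mathfrak{p}})$ is a bijection of spectral spaces and a spectral map, hence a homeomorphism.

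The main obstacle I anticipate is bookkeeping rather than mathematical depth: one must be careful that $\spc(R_{\mathfrak{p}})\to\spc(R)$ is the embedding coming from central localization (image $=\{\mathfrak{q}\subseteq\mathfrak{p}\}$, not open), match it correctly through the two homeomorphisms $\spc(R[t,t^{-1}])\cong\spc(R)$ induced by $\pi$ on central rings, and confirm that the Verdier-localization description of $\spc$ of a central localization (\cite[section 3]{balmer:sss}, specialized as in \cref{localization-beta-derived-fmod}) indeed yields exactly $\rho_{R}^{-1}$ of that embedded subspace. Once the identification $\img(\spc(q))=\rho_{R}^{-1}(\spc(R_{\mathfrak{p}}))$ is nailed down — which is the content one extracts from \cite{balmer:sss} applied to $S=R\backslash\mathfrak{p}$ — the cartesianness of the square is formal.
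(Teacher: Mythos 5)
Your proposal is correct and follows essentially the same route as the paper: identify $\fper(R_{\mathfrak{p}})$ with the central (Verdier) localization $S^{-1}\fper(R)$ at $S=R\backslash\mathfrak{p}$ via \cref{localization-fmod} and \cref{localization-homotopy}, and then invoke Balmer's theorem on spectra of central localizations \cite[5.6]{balmer:sss}, which is exactly the statement that the square is cartesian. The explicit verification of the universal property that you sketch is just an unpacking of that cited result.
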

\begin{proof}
 By \cref{localization-homotopy} and \cref{localization-fmod}, we know that $\fper(R_{\mathfrak{p}})$ is the Verdier localization of $\fper(R)$ with kernel $\langle\cone(s)\mid s\in S\rangle$. The claim now follows from~\cite[5.6]{balmer:sss}.
\end{proof}

\begin{rmk}
  \cref{localization-fmod} is false for general multiplicative subsets $S\subset R$, even without taking into account filtrations. The proof shows that the functor $S^{-1}\fmd(R)\to\fmd(S^{-1}R)$ is always fully faithful but it may fail to be essentially surjective. The correct statement would therefore be that $
  \left(
    S^{-1}\fmd(R)
\right)^{\natural}\simeq\fmd(S^{-1}R)$, where $(-)^{\natural}$ denotes the idempotent completion. We then deduce
\begin{align*}
  \K{\fmd(S^{-1}R)}^{b}&\simeq \K{\left(S^{-1}\fmd(R)\right)^{\natural}}^{b}\\
  &\simeq\left(\K{S^{-1}\fmd(R)}^{b}\right)^{\natural}&&\text{\cite[2.8]{Balmer-Schlichting}}\\
  &\simeq\left(S^{-1}\K{\fmd(R)}^{b}\right)^{\natural}&&\text{\cref{localization-homotopy}}
\end{align*}
and since the tt-spectrum is invariant under idempotent completion, we obtain a cartesian square as in \cref{localization-fmod-derived} for arbitrary multiplicative subsets~$S\subset R$.
\end{rmk}
\section{Reduction steps}
\label{sec:reduction}
Let $R$ be a noetherian ring. Recall from \cref{sec:main-thm} that we would like to prove that the tt-functors $\pi,\gr:\fper(R)\to\per(R)$ induce jointly surjective maps
\begin{equation*}
  \spc(\pi),\spc(\gr):\spc(\per(R))\to\spc(\fper(R)).
\end{equation*}
In this section, we will explain how to reduce this statement to $R$ a field. The latter case will be proved in \cref{sec:field}, and the case of arbitrary (\ie not necessarily noetherian) rings will be addressed in \cref{sec:continuity}.

\begin{pro}\label{reduction-nilpotent}
  If $r\in R$ is nilpotent then the canonical map 
  \begin{equation*}
    \spc(\fper(R/r))\to\spc(\fper(R))
  \end{equation*}
  is surjective.
\end{pro}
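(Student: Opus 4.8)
The plan is to realize the quotient map $R \to R/r$ on the level of tt-categories as a finite localization, so that the claimed surjectivity becomes an instance of the standard fact that $\spc$ of a Verdier quotient is a closed subset, together with a nilpotence argument showing that this closed subset is all of $\spc(\fper(R))$. More precisely, first I would observe that multiplication by $r$ on $R(0)$ gives a morphism $r\colon R(0)\to R(0)$ in $\fper(R)$, whose cone $\cone(r)$ generates a tt-ideal; the Verdier quotient $\fper(R)/\langle\cone(r)\rangle$ should be identified (using \cref{localization-homotopy} and an argument parallel to \cref{localization-fmod}, but for the quotient $R\to R/r$ rather than a localization) with $\fper(R/r)$, or at least with a category with the same tt-spectrum after idempotent completion. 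This identifies $\spc(\fper(R/r))$ with the closed subset $\supp(\cone(r)) = \{\mathfrak{P}\in\spc(\fper(R)) \mid \cone(r)\notin\mathfrak{P}\}$ of $\spc(\fper(R))$, via \cite[2.16]{balmer:sss} or the basic properties of supports.

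The second and decisive step is to show $\supp(\cone(r)) = \spc(\fper(R))$, equivalently that $\cone(r)$ lies in no prime $\mathfrak{P}$, equivalently that $\cone(r)\otimes-$ generates the whole category as a tt-ideal, i.e. $\langle\cone(r)\rangle = \fper(R)$. Since $r$ is nilpotent, say $r^n = 0$, the morphism $r\colon R(0)\to R(0)$ is $\otimes$-nilpotent in the sense that $r^{\otimes n} = 0$ (it is literally $r^n$ times the identity of $R(0)$, which is $0$). By the standard tensor-nilpotence observation for cones — if $f^{\otimes n}=0$ then $\one \in \langle\cone(f)\rangle$, cf. the proof of \cite[2.16]{balmer:sss} — we get $R(0) = \one \in \langle\cone(r)\rangle$, hence $\langle\cone(r)\rangle = \fper(R)$. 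Therefore $\supp(\cone(r)) = \spc(\fper(R))$ and the quotient map is onto.

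Combining: $\spc(\fper(R/r)) \cong \supp(\cone(r)) = \spc(\fper(R))$, and in particular the canonical map is surjective. The main obstacle I anticipate is the clean identification of $\fper(R/r)$ with (the idempotent completion of) $\fper(R)/\langle\cone(r)\rangle$: one must check that base change $-\otimes_R R/r$ on split finite projective filtered modules realizes the central/finite localization killing $r$, which is slightly more delicate than \cref{localization-fmod} because $R\to R/r$ is a quotient rather than a localization — though since $r$ is nilpotent the kernel is a nil ideal, and $\fmd$ of a ring and of its quotient by a nil ideal have equivalent (idempotent-completed) homotopy categories by lifting of idempotents and of finite projective modules. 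A mild secondary point is tracking the compatibility with the tensor structure and invertible object $R(1)$ throughout, so that $\spc(q)$ is genuinely the map induced by $\pi_{R/r}$ and $\gr_{R/r}$ as used in the sequel; but this is the same bookkeeping already carried out in \cref{sec:localization}.
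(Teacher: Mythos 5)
Your two steps contradict each other, and the contradiction pinpoints the error. In the second step you correctly observe that $r^{\otimes n}=r^{n}\cdot\id_{R(0)}=0$, hence $\one\in\langle\cone(r)\rangle$ and $\langle\cone(r)\rangle=\fper(R)$. But then the Verdier quotient $\fper(R)/\langle\cone(r)\rangle$ is the zero category, whereas $\fper(R/r)\neq 0$; so the identification proposed in your first step cannot hold. Base change along the surjection $R\to R/r$ is genuinely not a Verdier quotient by a tt-ideal (nor a central localization in the sense of \cref{sec:localization}): already $\per(k[\epsilon]/\epsilon^{2})\to\per(k)$ is neither full nor a localization. Your remark that idempotents and projectives lift along a nil ideal gives essential surjectivity of $\fmd(R)\to\fmd(R/r)$ but nothing like faithfulness, so it does not yield an equivalence of (idempotent-completed) homotopy categories. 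There is also a sign error in which closed subset you would obtain even if the identification held: for a quotient by a tt-ideal ${\mathcal J}$ the spectrum of the quotient is the set of primes \emph{containing} ${\mathcal J}$, i.e.\ the complement of $\supp({\mathcal J})$, not $\supp({\mathcal J})$ itself. The statement ``the image of $\spc(F)$ equals $\supp(G(\one))$'' is the one for a tt-functor $F$ admitting a right adjoint $G$ landing in compacts \cite[1.7]{balmer:surjectivity}; that is exactly what the paper uses for the non-zerodivisor case (\cref{reduction-nzd}), and it is unavailable here because $R/r$ is not perfect over $R$ when $r$ is nilpotent and nonzero.

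The salvageable intuition is that nilpotence of $r$ forces surjectivity, but the correct mechanism is Balmer's other criterion \cite[1.3]{balmer:surjectivity}: $\spc(F)$ is surjective provided $F$ detects $\otimes$-nilpotence of \emph{morphisms}. This is the paper's route. One reduces by rigidity to a morphism $f:R(0)\to B$ with $F(f)=0$; unravelling the homotopy witnessing $\overline{f}=0$ shows that, after modifying $f$ by a homotopy, its degree-zero component is divisible by $r$, whence $f^{\otimes m}$ factors through multiplication by $r^{m}=0$ for $m$ large. Your tensor-nilpotence computation in step two is essentially this observation applied to the single morphism $r$; it has to be applied instead to an arbitrary morphism killed by $F$, and it enters through the surjectivity criterion rather than through a (nonexistent) identification of $\fper(R/r)$ with a quotient of $\fper(R)$.
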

\begin{proof}
  Let $F=\otimes_{R}R/r:\fper(R)\to\fper(R/r)$. We will use the criterion in~\cite[1.3]{balmer:surjectivity} to establish surjectivity of $\spc(F)$, \ie we want to prove that $F$ detects $\otimes$-nilpotent morphisms. Let $f:A\to B\in\fper(R)$ such that~$\overline{f}:=F(f)=0$. Equivalently, we may consider $f$ as a morphism $f':R(0)\to A^{\vee}\otimes B$, where $A^{\vee}$ denotes the dual of $A$. Then $\overline{f'}=0$ and if $(f')^{\otimes m}=0$ then also $f^{\otimes m}=0$, in other words we reduce to $A=R(0)$.

  The morphism $f$ in $\fper(R)$ is then determined by a map $f^{0}:R(0)\to B^{0}$ such that $\delta^{0}f^{0}=0$, and $\overline{f}=0$ means that there is a map $\overline{h}:R/r(0)\to B^{-1}/r$ such that $\overline{f^{0}}=\overline{\delta^{-1}}\overline{h}$. Choose a lift $h:R(0)\to B^{-1}$ of $\overline{h}$ to $\fmd(R)$. There exists $g:R(0)\to B^{0}$ such that $f^{0}-gr=\delta^{-1}h$. The composite $gr$ determines a chain morphism, and we may assume that $f^{0}$ is of the form $gr$ for some $g:R(0)\to B^{0}$. (The map $g$ itself does not necessarily determine a chain morphism.)

Let $m\geq 1$ such that $r^{\circ m}=0$. Then $f^{\otimes m}:R(0)\to B^{\otimes m}$ is described by the morphism
\begin{align*}
  R(0)\xrightarrow{(gr)^{\otimes m}} (B^{0})^{\otimes m}\hookrightarrow (B^{\otimes m})^{0}\\
\intertext{which factors as}\\
R(0)\xrightarrow{r^{\circ m}=0}R(0)\xrightarrow{g^{\otimes m}} (B^{0})^{\otimes m}\hookrightarrow (B^{\otimes m})^{0}.
\end{align*}
We conclude that $f$ is $\otimes$-nilpotent as required.
\end{proof}

\begin{pro}\label{reduction-nzd}
  Let $r\in R$ be a non-zerodivisor. The image of the canonical map
  \begin{equation*}
    \spc(\fper(R/r))\to\spc(\fper(R))
  \end{equation*}
  is precisely the support of $\cone(r)$.
\end{pro}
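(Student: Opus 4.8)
The claim is that the image of $\spc(F)$ for $F = \otimes_R R/r : \fper(R) \to \fper(R/r)$ equals $\supp(\cone(r))$, where $r$ is regarded as the morphism $r\cdot\id : R(0)\to R(0)$ (or equivalently the scalar acting on the unit). The plan is to prove the two inclusions separately, using as the main external input the standard fact (Balmer) that $\img(\spc(F))$ equals the set of primes $\mathfrak{P}$ with $\ker(F)\subset\mathfrak{P}$, together with the description of supports via quotient functors.

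First I would establish $\img(\spc(F))\subset\supp(\cone(r))$. Since $r$ becomes invertible --- indeed zero, hence the unit becomes contractible is too strong; rather $r=0$ in $R/r$ --- one has $F(\cone(r)) = \cone(0: R/r(0)\to R/r(0)) = R/r(0)\oplus R/r(0)[1] \neq 0$, so $\cone(r)\notin\ker(F)$. Therefore for every prime $\mathfrak{P}$ in the image, $\cone(r)\notin\mathfrak{P}$, i.e. $\mathfrak{P}\in\supp(\cone(r))$. This direction is formal.

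The substantive direction is $\supp(\cone(r))\subset\img(\spc(F))$. By Balmer's surjectivity criterion applied to the Verdier quotient, it suffices to show that $F$ induces a surjection onto $\spc$ of the idempotent completion of $\fper(R)/\ker(F)$, and that this quotient is exactly the localization at $\cone(r)$; equivalently, I want to identify $\fper(R/r)$ (up to the usual idempotent-completion subtlety, cf.\ the remark after \cref{localization-fmod-derived}) with the Verdier quotient $\fper(R)/\langle\cone(r)\rangle$, whose spectrum is the closed subset $\supp(\cone(r))$ of $\spc(\fper(R))$. For this I would proceed in parallel with \cref{localization-beta-fmod}/\cref{localization-fmod}: show on the level of the additive categories that $\fmd(R)/\!\!\sim\,\to \fmd(R/r)$, where one kills the morphisms $r : a\to a$, is fully faithful after idempotent completion and essentially surjective, reducing as usual to twists $R(m)$ where $\hom_{\fmd(R)}(R(m),R(n))/r = R/r$ (here we use that $r$ is a non-zerodivisor only to control these Hom groups cleanly, and at the derived level to compute $\cone(r)$). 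Then apply \cref{localization-homotopy} and \cite[5.6]{balmer:sss} (or the idempotent-completion argument of the remark) to transfer this to $\fper$, obtaining that $\spc(F)$ is a homeomorphism onto the closed subset $\supp(\cone(r))\subset\spc(\fper(R))$.

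The main obstacle I anticipate is that killing $r$ does \emph{not} in general make $\fmd(R)/(r) \to \fmd(R/r)$ essentially surjective --- finitely generated projective $R/r$-modules need not lift to $R$ --- so, exactly as in the remark following \cref{localization-fmod-derived}, one only gets an equivalence after idempotent completion, and I must be careful to phrase everything so that the tt-spectrum (which is insensitive to idempotent completion) still sees the correct closed subset. A secondary point to check is that $\langle\cone(r)\rangle = \langle\cone(r^n)\mid n\geq 1\rangle$ and more importantly that the Verdier kernel of $F$ is generated by $\cone(r)$ rather than something larger, which should follow from the localization description together with \cite[2.16]{balmer:sss} as in \cref{localization-beta-derived-fmod}.
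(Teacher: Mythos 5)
There is a genuine gap, and it sits in the substantive direction. You propose to identify $\fper(R/r)$ (up to idempotent completion) with the Verdier quotient $\fper(R)/\langle\cone(r)\rangle$ and to read off its spectrum as the closed set $\supp(\cone(r))$. Both halves of this fail. First, the spectrum of a Verdier quotient $\T/\langle a\rangle$ consists of the primes \emph{containing} $a$, i.e.\ it is the open complement $U(a)=\spc(\T)\setminus\supp(a)$, not $\supp(a)$; so even if the identification held you would land on the wrong subset. Second, the identification cannot hold: as you yourself note in the easy direction, $F(\cone(r))=R/r(0)\oplus R/r(0)[1]\neq 0$, so $\cone(r)\notin\ker(F)$, which directly contradicts your goal of showing $\ker(F)=\langle\cone(r)\rangle$. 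The underlying issue is that killing $r$ is not a localization of the tt-category in any sense, so the analogy with \cref{localization-beta-fmod} and \cref{localization-fmod} breaks down exactly here: inverting $r$ (a central localization) cuts out the open set $U(\cone(r))$, whereas base change along the surjection $R\to R/r$ restricts to the closed set $\supp(\cone(r))$. The paper's proof instead uses the adjoint: since $r$ is a non-zerodivisor, $R/r(0)\simeq\cone(r)$ lies in $\fper(R)$, so $F$ admits a right adjoint $G$ (the forgetful functor) with values in compact objects, and \cite[1.7]{balmer:surjectivity} gives $\img(\spc(F))=\supp(G(\one))=\supp(\cone(r))$ in one line.

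A secondary point about your ``formal'' inclusion: from $\cone(r)\notin\ker(F)$ it does not follow that $\cone(r)\notin\mathfrak{P}$ for every prime $\mathfrak{P}=F^{-1}(\mathfrak{Q})$ in the image, since $F^{-1}(\mathfrak{Q})$ is in general strictly larger than $\ker(F)$. The correct argument is that $F(\cone(r))$ contains the unit $R/r(0)$ as a direct summand, hence generates the unit ideal and lies in no prime $\mathfrak{Q}$ of $\fper(R/r)$, whence $\cone(r)\notin F^{-1}(\mathfrak{Q})$. This inclusion is of course also subsumed by the adjoint argument above.
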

\begin{proof}
  Let $F=\otimes_{R}R/r:\fper(R)\to\fper(R/r)$. The fact that $r$ is a non-zerodivisor means that $R/r(0)$ is an object in $\fper(R)$ hence $F$ admits a right adjoint $G:\fper(R/r)\to\fper(R)$ (which is simply the forgetful functor). We may therefore invoke~\cite[1.7]{balmer:surjectivity}: the image of $\spc(F)$ is the support of~$G(R/r(0))=\cone(r)$.
\end{proof}

We can now put these pieces together. Notice that we have, for any ring morphism $R\to R'$ and $\xi\in\{\pi,\gr\}$, commutative squares
\begin{equation}\label{basechange-prime-generators}
  \xymatrix@C=5em{\spc(\fper(R))&\spc(\fper(R'))\ar[l]_{\spc(\otimes_{R}R')}\\
    \spc(\per(R))\ar[u]^{\spc(\xi)}&\spc(\per(R'))\ar[l]^{\spc(\otimes_{R}R')}\ar[u]_{\spc(\xi)}.}
\end{equation}
Let $\mathfrak{P}\in\spc(\fper(R))$ be a prime and set~$\mathfrak{p}=\rho_{R}(\mathfrak{P})\in\spc(R)$. From \cref{localization-fmod-derived} we know that $\mathfrak{P}$ lies in the subspace~$\spc(\fper(R_{\mathfrak{p}}))$. Using \cref{basechange-prime-generators} we therefore reduce to a local ring $(R,\mathfrak{p})$ (still assuming $\mathfrak{p}=\rho_{R}(\mathfrak{P})$). We now do induction on the dimension $d$ of~$R$. In each case, repeated application of \cref{reduction-nilpotent} in conjunction with \cref{basechange-prime-generators} allows us to assume $R$ reduced. If $d=0$, $R$ is necessarily a field and this case will be dealt with in \cref{field-spectrum}. If $d>0$ there exists a non-zerodivisor~$r\in \mathfrak{p}$. \cref{reduction-nzd} in conjunction with \cref{basechange-prime-generators} reduce us to $R/r$ but this ring has dimension $d-1$ and we conclude by induction.

\section{The case of a field}
\label{sec:field}
In this section we will prove \cref{main-thm} in the case of $R=k$ a field. This will follow easily from a more precise description of~$\fper(k)$.

We begin with a result describing the structure of any morphism in~$\fmd(k)$. For this, let us agree to call a quasi-abelian category  \emph{semisimple} if every short strictly exact sequence splits. Equivalently, a quasi-abelian category is semisimple if every object is projective.

\begin{lem}
  The category $\fmd(k)$ is semisimple quasi-abelian.
\end{lem}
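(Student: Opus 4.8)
The plan is to show directly that every object of $\FMd(k)$ is projective, which by the stated equivalent characterization of semisimplicity suffices. Concretely, I would argue that every filtered $k$-vector space is split, i.e.\ lies in the essential image of $\sigma:\GMd(k)\to\FMd(k)$; since split projectives are projective (and over a field every graded module is automatically ``projective'' in the relevant sense), this immediately gives the result.

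First I would take an arbitrary $a\in\FMd(k)$, so $a$ is a sequence of injections $\cdots\hookrightarrow a_{n+1}\hookrightarrow a_{n}\hookrightarrow\cdots$ of $k$-vector spaces whose colimit is $\pi(a)$. The point is purely linear algebra: choose, for each $n$, a complement $V_{n}\subseteq a_{n}$ of $a_{n+1}$ inside $a_{n}$, i.e.\ a splitting of $0\to a_{n+1}\to a_{n}\to\gr_{n}(a)\to 0$ (this uses that we are over a field). One must then check that the induced map $\bigoplus_{n}V_{n}(n)=\sigma(\gr_{\bullet}(a))\to a$, whose degree-$m$ component sends $\bigoplus_{n\le m}V_{n}$ into $a_{m}$ via the inclusions, is an isomorphism of filtered modules. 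Surjectivity in degree $m$ is an easy downward induction using $a_{m}=V_{m}\oplus a_{m+1}$ together with exhaustivity; injectivity is the statement that the sum $\sum_{n\le m}V_{n}$ inside $a_{m}$ is direct, which follows from the choice of complements. The only mild subtlety is the exhaustivity/colimit condition at $m\to-\infty$, which is handled because $\pi(a)=\varinjlim a_{m}$ and each vector in $\pi(a)$ already lies in some $a_{m}$.

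Having shown $a\cong\sigma(\gr_{\bullet}(a))$, I would conclude by invoking \cref{fmod-projectives}: $a$ is split, and since every $k$-vector space is a (free, hence) projective $k$-module, $\gr_{\bullet}(a)$ is a projective graded module, so $a=\sigma(\gr_{\bullet}(a))$ is split projective, hence projective. Therefore every object of $\FMd(k)$ is projective, and \cref{fmod-enough-projectives} already records that $\FMd(k)$ is quasi-abelian; so by definition it is semisimple quasi-abelian, and in particular every short strictly exact sequence splits.

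I do not expect a serious obstacle here — the whole content is the elementary observation that filtrations of vector spaces split compatibly with the grading. The one place to be careful is bookkeeping with the indexing (decreasing filtrations, the twist conventions $a(n)_{m}=a_{m-n}$, and the direction of the transition maps), and making sure the constructed splitting is a morphism in $\FMd(k)$, i.e.\ genuinely compatible with all transition maps, not just an isomorphism of underlying vector spaces in each degree. Everything else is formal given the lemmas already proved in \cref{sec:fmod}.
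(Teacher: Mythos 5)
There is a genuine gap, and it comes in two parts. First, you are proving a statement about the wrong category: the lemma concerns $\fmd(k)$, the split \emph{finite} projectives, not all of $\FMd(k)$. Second, the statement you set out to prove for $\FMd(k)$ --- that every filtered $k$-vector space is split, hence projective --- is false. The constant filtration $\Delta(k)$ (every $a_{n}=k$, all transition maps the identity) is a perfectly good object of $\FMd(k)$ with $\gr_{\bullet}(\Delta(k))=0$, so it cannot be isomorphic to $\sigma(\gr_{\bullet}(\Delta(k)))=0$; by \cref{fmod-projectives} it is not projective either, so $\FMd(k)$ is not semisimple. Separatedness does not save the argument: for $k[[t]]$ with the $t$-adic filtration ($a_{n}=t^{n}k[[t]]$ for $n\geq 0$) your recursion $x=v_{m}+x'$, $x'=v_{m+1}+x''$, \dots\ never terminates, and the image of $\bigoplus_{n}V_{n}(n)\to a$ is only $k[t]\subsetneq k[[t]]$. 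Your ``downward induction'' for surjectivity silently assumes the filtration is finite (eventually zero), which is exactly the extra hypothesis that cuts out $\fmd(k)$ inside $\FMd(k)$.

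For the category the lemma is actually about, objects are split (finite) projective by definition, and projectivity is \cref{fmod-finite-projectives}; so the splitting you labor over is either false (in $\FMd(k)$) or tautological (in $\fmd(k)$). What genuinely remains to be checked --- and what your proposal never addresses --- is that $\fmd(k)$ is a quasi-abelian category in its own right, i.e.\ that it is closed under kernels and cokernels computed in $\FMd(k)$. This is where your linear algebra belongs: the kernel of a morphism of split finite projectives is a separated, finitely filtered, finite-dimensional filtered vector space, and \emph{for such objects} the complement-choosing argument does terminate and shows they are again split finite projective. Redirected this way, your idea recovers the paper's proof; as written, the argument proves a false statement about $\FMd(k)$ and omits the closure-under-(co)kernels step for $\fmd(k)$.
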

\begin{proof}
  Notice that $\fmd(k)\subset \FMd(k)$ is simply the full subcategory of separated filtered vector spaces whose underlying vector space is finite dimensional. This is an additive subcategory and the set of objects is closed under kernels and cokernels in $\FMd(k)$. We deduce that it is a quasi-abelian subcategory.

 Since every object in $\fmd(k)$ is projective (\cref{fmod-finite-projectives}), semisimplicity follows.
\end{proof}

\begin{lem}\label{field-morphism-structure}
  Let $f:a\to b$ be a morphism in a semisimple quasiabelian category. Then $f$ is the composition
  \begin{equation}
    \label{field-morphism-composition}
    f=f_{m} \circ f_{em}\circ f_{e},
  \end{equation}
  where
  \begin{itemize}
  \item $f_{e}$ is the projection onto a direct summand (in particular a strict epimorphism),
  \item $f_{em}$ is an epimonomorphism,
  \item $f_{m}$ is the inclusion of a direct summand (in particular a strict monomorphism).
  \end{itemize}
\end{lem}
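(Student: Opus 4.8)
The plan is to exploit semisimplicity to decompose both the source and the target of $f$ into pieces adapted to $f$. First I would invoke the semisimplicity hypothesis in the following form: in a semisimple quasi-abelian category every strict monomorphism and every strict epimorphism is split (since both the sub- and the quotient appearing in a short strictly exact sequence are projective, the sequence splits). Now $f:a\to b$ factors canonically as $a\twoheadrightarrow\coimg(f)\to\img(f)\hookrightarrow b$, where the outer maps are a strict epimorphism and a strict monomorphism respectively, and the middle map $\coimg(f)\to\img(f)$ is by definition an epi-mono (it is both an epimorphism and a monomorphism); this is the standard factorization available in any quasi-abelian category. By the splitting just observed, the strict epimorphism $a\twoheadrightarrow\coimg(f)$ is (up to isomorphism of the factorization) the projection onto a direct summand, so it can be taken to be $f_{e}$; similarly the strict monomorphism $\img(f)\hookrightarrow b$ is the inclusion of a direct summand, to be taken as $f_{m}$; and the middle map is $f_{em}$.

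The main step is therefore to match the abstract canonical factorization with the stated ``direct summand'' descriptions. Concretely: choose a splitting $a\cong\ker(a\to\coimg(f))\oplus\coimg(f)$, which exists because $a\to\coimg(f)$ is a split strict epimorphism; under this identification the map $a\to\coimg(f)$ becomes the projection onto the second summand, which is exactly the form required of $f_{e}$. Dually, choose a splitting $b\cong\img(f)\oplus\coker(\img(f)\to b)$, which exists because $\img(f)\hookrightarrow b$ is a split strict monomorphism; then $\img(f)\hookrightarrow b$ is the inclusion of the first summand, which is the form required of $f_{m}$. Composing, $f = f_{m}\circ f_{em}\circ f_{e}$ with $f_{em}:\coimg(f)\to\img(f)$ the canonical epi-mono comparison morphism.

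I do not expect a genuine obstacle here — the only thing to be slightly careful about is that ``epi-monomorphism'' means a morphism that is simultaneously epic and monic (which in a non-abelian quasi-abelian category need not be an isomorphism, e.g.\ $\beta$), so one should not accidentally claim $f_{em}$ is invertible; and one should make sure the splittings are compatible, i.e.\ that after choosing them the triple composite literally reproduces $f$ and not merely something isomorphic to it. Both points are handled by simply reading off the maps from the canonical factorization rather than reconstructing $f$ from the summands. So the proof is: (i) recall the canonical $\coimg$–$\img$ factorization in a quasi-abelian category; (ii) observe that in the semisimple case its outer two maps are split; (iii) rewrite split strict epi as projection onto a summand and split strict mono as inclusion of a summand; (iv) set $f_{e}$, $f_{em}$, $f_{m}$ to be the three resulting maps and conclude.
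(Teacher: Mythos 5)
Your proposal is correct and follows the same route as the paper: the canonical $\coimg$--$\img$ factorization available in any quasi-abelian category, combined with the observation that semisimplicity splits the outer strict epimorphism and strict monomorphism, so they become a projection onto and an inclusion of a direct summand. The paper's proof is exactly this, stated more tersely.
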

\begin{proof}
  As in every quasi-abelian category, $f$ factors as
  \begin{equation*}
    a\xrightarrow{f_{e}}\coimg(f)\xrightarrow{f_{em}}\img(f)\xrightarrow{f_{m}} b,
  \end{equation*}
  where $f_{e}$ is a strict epimorphism, $f_{em}$ is an epimonomorphism, and $f_{m}$ is a strict monomorphism. The Lemma now follows from the definition of semisimplicity.
\end{proof}

\begin{rmk}\label{field-morphism-characterization}
  \cref{field-morphism-structure} allows to characterize certain properties of morphisms $f:a\to b$ in a particularly simple way:
  \begin{enumerate}
  \item $f$ is a monomorphism if and only if $f_{e}$ is an isomorphism.
  \item $f$ is an epimorphism if and only if $f_{m}$ is an isomorphism.
  \item $f$ is strict if and only if $f_{em}$ is an isomorphism.
  \end{enumerate}
\end{rmk}

Fix a semisimple quasi-abelian category~${\mathcal A}$. Its bounded derived category $\D{\mathcal A}^{b}$ admits a bounded t-structure whose heart $\D{\mathcal A}^{\heartsuit}$ is the subcategory of objects represented by complexes
\begin{equation}
  0\to a\xrightarrow{f}b\to 0,\label{left-heart-complex}
\end{equation}
where $b$ sits in degree 0 and $f$ is a monomorphism in ${\mathcal A}$.\footnote{This is~\cite[1.2.18, 1.2.21]{Schneiders:quasi-abelian}. The reader who is puzzled by the asymmetry of this statement should rest assured that there is a dual t-structure for which the objects in the heart are represented by \emph{epi}morphisms~\cite[1.2.23]{Schneiders:quasi-abelian}. Also, the existence of the t-structures does not require $\A$ to be semisimple.}

\begin{lem}\label{semisimple-hereditary}
  The t-structure on $\D{\mathcal A}^{b}$ is strongly hereditary, \ie for any $A,B\in \D{\mathcal A}^{\heartsuit}$ and $i\geq 2$, we have~$\hom_{\D{\mathcal A}^{b}}(A,B[i])=0$.
\end{lem}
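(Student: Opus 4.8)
The plan is to show that any morphism $A\to B[i]$ in $\D{\mathcal A}^{b}$ with $A,B$ in the heart and $i\geq 2$ must vanish, by exploiting the very explicit two-term presentations of objects in the heart together with semisimplicity. First I would represent $A$ by a complex $0\to a_{1}\xrightarrow{f}a_{0}\to 0$ (with $a_{0}$ in degree $0$ and $f$ a monomorphism) and $B$ similarly by $0\to b_{1}\xrightarrow{g}b_{0}\to 0$. Since $\D{\mathcal A}^{b}$ can be computed via the homotopy category of $\A$ (here I would invoke the relation between the derived category of a quasi-abelian category and its homotopy category, as used for instance in \cref{fper-homotopy-derived}), a morphism $A\to B[i]$ is represented by a roof whose backward leg is a strict quasi-isomorphism. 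The key point is that because $B$ is concentrated in (complex-)degrees $0$ and $-1$, the shift $B[i]$ is concentrated in degrees $-i$ and $-i-1$, which for $i\geq 2$ are disjoint from the degrees $0,-1$ where $A$ lives.

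The main work is therefore to control the strict quasi-isomorphisms one must invert. Here I would use \cref{field-morphism-structure} and \cref{field-morphism-characterization}: in a semisimple quasi-abelian category every morphism decomposes as (split projection)$\,\circ\,$(epimono)$\,\circ\,$(split inclusion), and a morphism is a strict monomorphism or strict epimorphism precisely when one of the outer pieces is an isomorphism. Consequently, any strictly exact complex splits into a direct sum of complexes of the form $0\to c\xrightarrow{\id}c\to 0$ (contractible) and possibly shifted epimono pieces; I would argue that when one restricts to resolutions of a two-term heart object, the strict quasi-isomorphisms can be taken to be chain homotopy equivalences up to contractible summands, so that $\hom_{\D{\mathcal A}^{b}}(A,B[i])$ is already computed in $\K{\mathcal A}^{b}$, i.e. as chain-homotopy classes of maps $A\to B[i]$. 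Once we are in $\K{\mathcal A}^{b}$, a map $A\to B[i]$ of complexes supported in degree ranges $\{-1,0\}$ and $\{-i-1,-i\}$ respectively is identically zero when $i\geq 2$ (no overlapping degrees), hence also zero up to homotopy.

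I expect the genuine obstacle to be the passage from $\D{\mathcal A}^{b}$ to $\K{\mathcal A}^{b}$ for Hom-computations, i.e. showing that inverting strict quasi-isomorphisms does not create new morphisms between bounded-complex representatives of heart objects. The clean way to handle this is to observe that, by semisimplicity, every object of $\A$ is projective, so every bounded complex is a complex of projectives; therefore $\K{\mathcal A}^{b}\to\D{\mathcal A}^{b}$ is fully faithful on the subcategory of bounded complexes of projectives, which here is all of $\K{\mathcal A}^{b}$. (This is the analogue of the statement, used in \cref{fper-homotopy-derived}, that $\K{\fmd(R)}^{b}\simeq\fper(R)$.) With that in hand the argument collapses to the trivial degree-count above. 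An alternative, if one prefers to stay t-structure-theoretic, is to note that $\hom_{\D{\mathcal A}^{b}}(A,B[i])$ for $A,B$ in the heart is a subquotient of extension groups built from $\hom$ and $\mathrm{Ext}^{1}$ in the heart $\D{\mathcal A}^{\heartsuit}$ via the spectral sequence / filtration associated to the two-term filtration of $A$ and $B$; since each of $a_{0},a_{1},b_{0},b_{1}$ is projective in $\A$, the relevant $\mathrm{Ext}^{1}_{\A}$-groups vanish, and the only surviving contributions sit in total degree $\leq 1$, giving $0$ for $i\geq 2$. Either route reduces the lemma to the projectivity of all objects, which is exactly the hypothesis of semisimplicity.
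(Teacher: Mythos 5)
Your proposal is correct and is essentially the paper's own argument: since semisimplicity makes every object of $\A$ projective, the canonical functor $\K{\A}^{b}\to\D{\A}^{b}$ is an equivalence (this is \cite[1.3.22]{Schneiders:quasi-abelian}), so the Hom-group is computed by chain maps up to homotopy between the two-term representatives, and the degree count for $i\geq 2$ forces it to vanish. The extra machinery you invoke (roofs, the splitting of strictly exact complexes, the spectral-sequence alternative) is not needed once that equivalence is in hand, but it does not introduce any gap.
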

\begin{proof}
  This follows from the fact that $A$ and $B$ are represented by complexes of the form (\ref{left-heart-complex}), and that homomorphisms can be computed in the homotopy category. Indeed, as every object in ${\mathcal A}$ is projective, the canonical functor $\K{\A}^{b}\to\D{\A}^{b}$ is an equivalence \cite[1.3.22]{Schneiders:quasi-abelian}.
\end{proof}

Assume now in addition that ${\mathcal A}$ is a tensor category and every object is a finite sum of invertibles. Clearly, $\fmd(k)$ satisfies this condition.

\begin{pro}\label{semisimple-derived-objects}
  Every object in $\D{\mathcal A}^{b}$ is of the form
  \begin{equation*}
    \oplus_{i}\cone(g_{i})[i]\bigoplus \oplus_{j} c_{j}[a_{j}],
  \end{equation*}
where the sums are finite, the $c_{j}$ are invertible in ${\mathcal A}$, and the $g_{i}$ are epimonomorphisms in~${\mathcal A}$.
\end{pro}
\begin{proof}
  Let $A\in \D{\mathcal A}^{b}$. By \cref{semisimple-hereditary}, the object $A$ is a finite direct sum of shifts of objects in~$\D{\mathcal A}^{\heartsuit}$. As discussed above, every object in the heart is represented by a complex as in \cref{left-heart-complex}. We then deduce from \cref{field-morphism-characterization} that $f$ is an epimonomorphism $g$ followed by the inclusion of a direct summand, say with direct complement~$c$. Thus
  \begin{equation*}
    \cone(f)=\cone(g)\oplus c.
  \end{equation*}
\end{proof}

We now come to the study of tt-ideals in~$\fper(k)=\D{\fmd(k)}^{b}$. \cref{semisimple-derived-objects} tells us that every prime ideal is generated by cones of epimonomorphisms in~$\fmd(k)$. However, it turns out that all these cones generate the same prime ideal (except if 0, of course).
\begin{pro}\label{field-tt-ideals}
  There is a unique non-trivial, proper tt-ideal in $\fper(k)$ given by
  \begin{equation*}
    \ker(\pi)=\langle\cone(\beta)\rangle.
  \end{equation*}
  In particular, $\langle\cone(\beta)\rangle$ is a prime ideal.
\end{pro}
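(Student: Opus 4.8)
The idea is to use the structural description of $\fper(k)$ from \cref{semisimple-derived-objects}: every non-zero object is a finite direct sum of shifts of invertibles $k(n)$ and cones $\cone(g)$ of epimonomorphisms $g$ in $\fmd(k)$. Since shifts of invertibles generate all of $\fper(k)$ as a thick subcategory, any proper tt-ideal $\mathcal J$ contains no invertible, hence every object of $\mathcal J$ is (up to summands and shifts) built from cones of epimonomorphisms. So the first reduction is: every non-trivial tt-ideal is generated by objects of the form $\cone(g)$ with $g\colon a\to b$ an epimonomorphism in $\fmd(k)$.

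The heart of the argument is then to show that for \emph{any} non-zero epimonomorphism $g$ in $\fmd(k)$, one has $\langle\cone(g)\rangle=\langle\cone(\beta)\rangle$. First I would observe that, by additivity of $\cone$ and the fact that an epimonomorphism of filtered vector spaces decomposes as a direct sum of ``elementary'' epimonomorphisms $k(n)\to k(m)$ with $n>m$ (each given by the identity on the underlying line, shifting filtration degree), it suffices to treat $g=\beta^{n-m}\colon k(m)\to k(n)$ — wait, more precisely the elementary epimonomorphism $k(n)\twoheadrightarrow k(m)$ for $n>m$, which after twisting is $k(n-m)\to k(0)$, i.e. a power of $\beta$ (read in the appropriate direction). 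Then $\cone$ of the composite $\beta^r$ sits in a triangle built from $\cone(\beta)$ twisted, so by the usual octahedron/two-out-of-three argument for tt-ideals (as in \cite[2.16]{balmer:sss}, already invoked in \cref{localization-beta-derived-fmod}) one gets $\langle\cone(\beta^r)\rangle=\langle\cone(\beta)\rangle$ for all $r\geq 1$. This identifies the tt-ideal generated by any single non-zero $\cone(g)$ with $\langle\cone(\beta)\rangle$, and since an arbitrary non-trivial tt-ideal is generated by such cones, there is exactly one non-trivial tt-ideal, namely $\langle\cone(\beta)\rangle$.

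It remains to identify $\langle\cone(\beta)\rangle$ with $\ker(\pi)$ and to note it is prime. The equality $\ker(\pi)=\langle\cone(\beta)\rangle$ is exactly \cref{localization-beta-derived-fmod} applied to $R=k$. Primality is then automatic: $\pi\colon\fper(k)\to\per(k)$ is a tt-functor and $\per(k)$ has spectrum a single point (it is $\spec(k)$), so $\per(k)$ has no non-trivial tt-ideals, meaning $\ker(\pi)$ is a prime ideal (the quotient $\fper(k)/\ker(\pi)\simeq\per(k)$ being a local tt-category, or directly: if $a\otimes b\in\ker(\pi)$ then $\pi(a)\otimes\pi(b)=0$ in $\per(k)$ forces $\pi(a)=0$ or $\pi(b)=0$). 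Alternatively, uniqueness of the non-trivial tt-ideal already forces it to be prime, since $0$ is prime iff $\fper(k)/0=\fper(k)$ has no zero-divisors, which fails, but the quotient by the \emph{unique} non-trivial ideal has no non-trivial ideals and hence is ``local'', so that ideal is prime.

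\medskip

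The step I expect to be the main obstacle is the reduction showing $\langle\cone(g)\rangle=\langle\cone(\beta)\rangle$ for every non-zero epimonomorphism $g$: one must be careful that an epimonomorphism in $\fmd(k)$ really does split, up to isomorphism, as an orthogonal sum of twists of powers of $\beta$ (this uses \cref{field-morphism-structure} and \cref{field-morphism-characterization} together with the classification of separated finite-dimensional filtered vector spaces as sums of $k(n)$'s), and that passing between $\cone(\beta^r)$ and $\cone(\beta)$ genuinely stays inside the tt-ideal in both directions — the ``$\supseteq$'' direction being the subtle one, handled exactly as the citation \cite[2.16]{balmer:sss} from \cref{localization-beta-derived-fmod}. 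Everything else is bookkeeping with \cref{semisimple-derived-objects} and the already-established \cref{localization-beta-derived-fmod}.
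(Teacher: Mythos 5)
Your overall strategy is sound, and both the endgame and the opening move coincide with the paper's: the identification $\ker(\pi)=\langle\cone(\beta)\rangle$ is \cref{localization-beta-derived-fmod}, primality follows because $\pi$ is a tt-functor into $\per(k)$, which has no zero-divisors, and the reduction of an arbitrary non-trivial tt-ideal to principal ideals $\langle\cone(g)\rangle$ with $g$ an epimonomorphism is exactly the use the paper makes of \cref{semisimple-derived-objects}. Where you genuinely diverge is in proving $\langle\cone(g)\rangle=\langle\cone(\beta)\rangle$. You propose a normal form: every epimonomorphism in $\fmd(k)$ should be isomorphic to a direct sum of elementary maps $c\,\beta^{r}\colon k(n)\to k(n+r)$, after which \cite[2.16]{balmer:sss} gives $\langle\cone(\beta^{r})\rangle=\langle\cone(\beta)\rangle$ and you are done. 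This normal form is true, but it is precisely the point you have not established: \cref{field-morphism-structure}, \cref{field-morphism-characterization} and the decomposition of objects of $\fmd(k)$ into twists $k(n)$ do not by themselves diagonalize a \emph{morphism}. What you actually need is the classical fact that two exhaustive separated decreasing filtrations on a finite-dimensional vector space admit a common splitting, applied to the two filtrations that the epimonomorphism $g$ induces on the common underlying space $\pi(a)=\pi(b)$; with that lemma supplied your argument closes. The paper sidesteps any such structure theorem with a determinant trick: writing $g$ as a square matrix over $k[\beta]$, its determinant $p(\beta)$ satisfies $p(0)=0$ because $\gr(g)$ is not invertible, and in the quotient $\fper(k)/\langle\cone(g)\rangle$ the invertibility of the image of $g$ forces invertibility of the image of $p$ and hence of $\beta$, giving $\cone(\beta)\in\langle\cone(g)\rangle$ directly. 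Your route buys a concrete classification of morphisms (and hence objects) of $\fper(k)$, at the cost of an extra linear-algebra input; the paper's route is shorter and does not require a normal form. One small slip in your aside on primality: $\fper(k)$ has \emph{no} zero-divisors (by conservativity of $\gr$ and the K\"unneth formula), so $\langle 0\rangle$ is in fact prime, as recorded in \cref{field-spectrum}; the parenthetical ``which fails'' is wrong, though it plays no role in your main argument.
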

\begin{proof}
  The equality of the two tt-ideals follows from \cref{localization-beta-derived-fmod}. Since $\pi$ is a tt-functor and $\per(k)$ is local, it is clear that its kernel is a prime ideal.

Let $A$ be a non-zero object in $\fper(k)$ such that~$\langle A\rangle\neq\fper(k)$. We would like to show $\langle A\rangle=\langle\cone(\beta)\rangle$. By \cref{semisimple-derived-objects}, we may assume $A=\cone(g)$ where $g$ is a non-strict epimonomorphism in~$\fmd(k)$. Writing the domain and codomain of $g$ as a sum of invertibles, we may identify $g$ with a square matrix with entries in the polynomial ring~$k[\beta]$. Let $p(\beta)\in k[\beta]$ be the determinant. Since $g$ is not an isomorphism neither is $\gr(g)\in\per(k)$ by \cref{gr-derived}. We deduce that $p(0)=0$, or in other words $p(\beta)=\beta\cdot p'(\beta)$ for some~$p'(\beta)\in k[\beta]$.

Let $\T=\fper(k)/\langle\cone(g)\rangle$ and denote by $\varphi:\fper(k)\to\T$ the localization functor. As $\T$ is a tt-category we can consider the graded (automatically commutative) central ring ${\mathcal R}^{\bullet}_{\T}$ with respect to~$\varphi(k(1))$. Since $\varphi(g)$ is invertible, $\varphi(p)\in{\mathcal R}^{\bullet}_{\T}$ is invertible as well. But then we must have
\begin{equation*}
  \left(
    \varphi(p)^{-1}\cdot\varphi(p')
  \right)\cdot\varphi(\beta)=\varphi(p)^{-1}\cdot\varphi(p)=1
\end{equation*}
so $\varphi(\beta)$ is invertible as well. In other words, $\cone(\beta)\in\ker(\varphi)=\langle{\cone(g)}\rangle$.

Conversely, $\pi(g)$ is an isomorphism since $g$ is an epimonomorphism. In other words,~$\cone(g)\in\langle\cone(\beta)\rangle$.
\end{proof}

\begin{cor}\label{field-spectrum}
  The canonical morphism
  \begin{equation*}
    \rho^{\bullet}_{k}:\spec(\fper(k))\to\spech(k[\beta])
  \end{equation*}
  is an isomorphism of locally ringed spaces. The tt-spectrum $\spc(\fper(k))$ is the topological space
  \begin{equation*}
    \xymatrix{\langle 0\rangle=\ker(\gr)\\
      \langle\cone(\beta)\rangle=\ker(\pi)\ar@{-}[u]}
  \end{equation*}
  where the only non-trivial specialization relation is indicated by the vertical line going upward.
\end{cor}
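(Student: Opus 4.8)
The plan is to read everything off \cref{field-tt-ideals}, which has already done the substantive work. First I would enumerate the prime ideals of $\fper(k)$. By \cref{field-tt-ideals} the tt-ideals of $\fper(k)$ are precisely $0$, $\langle\cone(\beta)\rangle$ and $\fper(k)$. The last is not proper; $\langle\cone(\beta)\rangle=\ker(\pi)$ is prime since it is the kernel of a tt-functor into $\per(k)$, a category with one-point spectrum; and $0=\ker(\gr)$ is prime because $\gr$ is conservative (\cref{gr-derived}) and monoidal, so $a\otimes b=0$ forces $\gr(a)\otimes\gr(b)=0$ in $\per(k)$, hence $\gr(a)=0$ or $\gr(b)=0$, hence $a=0$ or $b=0$. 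Thus $\spc(\fper(k))=\{\,0,\langle\cone(\beta)\rangle\,\}$, and since $0\subsetneq\langle\cone(\beta)\rangle$, the point $\langle\cone(\beta)\rangle$ is generic while $0$ is closed; this is exactly the picture asserted in the corollary.

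Next I would pin down $\rho^{\bullet}_{k}$ and check it is a homeomorphism. Since $k$ is a field, $\spech(k[\beta])$ also has exactly two homogeneous primes, the generic point $\langle 0\rangle$ and the closed point $\langle\beta\rangle$. Specializing the diagrams and inclusions of \cref{image-gr-pi} to $R=k$, where $\spc(\per(k))$ is a single point whose images under $\spc(\gr)$ and $\spc(\pi)$ are $\ker(\gr)=0$ and $\ker(\pi)=\langle\cone(\beta)\rangle$, gives
\begin{equation*}
  \rho^{\bullet}_{k}(0)\in Z(\beta)=\{\langle\beta\rangle\},\qquad \rho^{\bullet}_{k}(\langle\cone(\beta)\rangle)\in U(\beta)=\{\langle 0\rangle\}.
\end{equation*}
Hence $\rho^{\bullet}_{k}$ is the bijection $0\mapsto\langle\beta\rangle$, $\langle\cone(\beta)\rangle\mapsto\langle 0\rangle$, carrying generic point to generic point. (Alternatively one computes this straight from $\rho^{\bullet}_{k}(\mathcal{P})=\{f\in k[\beta]\text{ homogeneous}\mid\cone(f)\notin\mathcal{P}\}$, using $\cone(c\beta^{n})\cong\cone(\beta^{n})$ for $c\in k^{\times}$, that $\cone(\beta^{n})\in\langle\cone(\beta)\rangle=\ker(\pi)$, and that $\cone(0\colon k(0)\to k(n))\notin\ker(\pi)$.) Being a bijective spectral map of spectral spaces \cite[5.7]{balmer:sss} that lifts the unique nontrivial specialization, $\rho^{\bullet}_{k}$ is a homeomorphism \cite[8.16]{hochster:spectral}, and then \cite[6.11]{balmer:sss} upgrades it automatically to an isomorphism of locally ringed spaces. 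Both assertions of the corollary follow.

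I do not expect a genuine obstacle here: the one place needing a little care is the identification of $\rho^{\bullet}_{k}$ on the two points — that is, correctly matching $\ker(\gr)$ with $Z(\beta)$ and $\ker(\pi)$ with $U(\beta)$ — and this is already recorded in \cref{image-gr-pi}. Everything else is bookkeeping built on top of \cref{field-tt-ideals} and the two-point nature of both spaces.
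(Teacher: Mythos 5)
Your proposal is correct and follows the same route the paper intends: the corollary is stated without a separate proof precisely because it is immediate from \cref{field-tt-ideals} (which supplies the complete list of tt-ideals and the primality of $\langle\cone(\beta)\rangle=\ker(\pi)$), plus the bookkeeping you carry out — primality of $0=\ker(\gr)$ via conservativity of the tt-functor $\gr$, the matching of the two points with $Z(\beta)$ and $U(\beta)$ already recorded in \cref{image-gr-pi}, and the upgrade to locally ringed spaces via \cite[6.11]{balmer:sss}. Your identification of $\langle\cone(\beta)\rangle$ as the generic point and $0$ as the closed point is the correct reading of Balmer's closure convention and agrees with the specialization arrow in the statement.
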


\section{Continuity of tt-spectra}
\label{sec:continuity}

Our primary goal in this section is to deduce the veracity of \cref{main-thm} from its veracity for noetherian rings. The idea is to write an arbitrary ring as a filtered colimit of noetherian rings, and since this technique of reducing some statement in tt-geometry to the analogous statement about ``more finite'' objects can be useful in other contexts we decided to approach the question in greater generality.

Denote by $\ttCat$ the category of small tt-categories and tt-functors. For the moment we assume that all structure is strict, \eg the tt-functors preserve the tensor product and translation functor on the nose.
\begin{lem}\label{ttcat-filtered-colimits}
  The forgetful functor $\ttCat\to\Cat$ creates filtered colimits.
\end{lem}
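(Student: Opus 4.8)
The statement is that the forgetful functor $\ttCat\to\Cat$ creates filtered colimits. Recall what this means: given a filtered diagram $i\mapsto\T_i$ in $\ttCat$, if the underlying diagram in $\Cat$ has a colimit $\C{}$, then $\C{}$ admits a (necessarily unique) tt-structure making the cocone maps tt-functors, and $\C{}$ with this structure is a colimit in $\ttCat$. So the plan is: (1) compute the colimit $\C{} := \varinjlim_i \T_i$ in $\Cat$ concretely; (2) transport the triangulated and tensor structures along the cocone; (3) check the resulting $\C{}$ is a genuine tt-category (\emph{small}, the axioms hold, the tensor is compatible); (4) verify the universal property in $\ttCat$.

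\textbf{Key steps.} First I would recall the explicit model for filtered colimits in $\Cat$: objects of $\C{}$ are represented by pairs $(i, x)$ with $x\in\T_i$, with $(i,x)\sim(j,y)$ when they become equal after pushing forward to some common $\T_k$; morphisms $(i,x)\to(i,y)$ are germs of morphisms $x\to y$ in the $\T_k$'s, i.e. $\hom_{\C{}}((i,x),(i,y)) = \varinjlim_{k}\hom_{\T_k}(F_{ik}x, F_{ik}y)$ where $F_{ik}\colon\T_i\to\T_k$ are the transition functors. (Since any two objects/morphisms are each supported at a single index and the index category is filtered, one can always pass to a common index; this is the ``finiteness'' that makes everything work.) Next, for the \emph{triangulated} structure: define the shift functor $\Sigma$ on $\C{}$ by $\Sigma(i,x) = (i,\Sigma_{\T_i} x)$ — well-defined because the transition functors are exact, hence strictly commute with shift; declare a triangle in $\C{}$ to be distinguished if it is isomorphic to the image of a distinguished triangle from some $\T_i$. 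One then checks the octahedral and other axioms (TR1--TR4) by lifting the relevant finite configuration of objects and morphisms to a single $\T_k$ (using filteredness) and invoking the axiom there; exactness of the transition functors ensures distinguished triangles map to distinguished triangles, so this is consistent. For the \emph{tensor} structure: set $(i,x)\otimes(i,y) = (i, x\otimes_{\T_i} y)$ and $\one_{\C{}} = (i,\one_{\T_i})$; the associativity, symmetry, and unit constraints are obtained by lifting to a common index, and bi-exactness / additivity of $\otimes$ on $\C{}$ follows from the corresponding property at each index together with the fact that filtered colimits of abelian groups are exact (so $\hom$'s behave well). Compatibility of $\otimes$ with $\Sigma$ (the tt-compatibility condition) likewise lifts. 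Smallness of $\C{}$ is clear since it is a quotient of a small diagram. Finally, for the universal property: a tt-functor $\C{}\to\mathcal{S}$ is in particular a functor, so by the colimit property in $\Cat$ it corresponds to a compatible cocone $\T_i\to\mathcal{S}$ of functors; one checks this cocone consists of tt-functors iff the induced functor on $\C{}$ is a tt-functor, using again that every object/morphism/finite diagram in $\C{}$ comes from some $\T_i$. Uniqueness of the tt-structure on $\C{}$ with the stated property is immediate from the same principle.

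\textbf{Main obstacle.} None of the steps is deep, but the one requiring the most care is verifying the triangulated axioms — especially the octahedral axiom TR4 — on $\C{}$. The subtlety is that a distinguished triangle in $\C{}$ is only \emph{isomorphic} to one pulled back from some $\T_i$, and completing a morphism to a triangle, or filling in an octahedron, involves choosing cones; one must argue that any finite such configuration (the morphisms, the chosen isomorphisms to index-level triangles, and the connecting data) can be simultaneously realized in a single $\T_k$ after moving far enough along the filtered system, and that the choice of $k$ doesn't matter. This is a standard ``everything finite lives at one stage'' argument, but it is where all the bookkeeping sits; once it is granted, exactness of the transition functors does the rest. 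I would state this verification compactly rather than belabor it, since the pattern (lift a finite diagram, apply the axiom upstairs, push back down) recurs throughout and is entirely routine given filteredness.
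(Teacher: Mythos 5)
Your proposal is correct and follows essentially the same route as the paper's own (sketched) proof: build the colimit in $\Cat$, transfer the shift, distinguished triangles, and tensor structure along the cocone, and verify all axioms and the universal property by the standard ``every finite configuration lives at a single stage'' argument for filtered diagrams. The only cosmetic difference is that the paper outsources the monoidal part to a citation of Johnstone while you construct it by hand; the substance is identical.
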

\begin{proof}[Proof sketch.]
  The fact that filtered colimits of monoidal categories are created by the forgetful functor is~\cite[C1.1.8]{johnstone:sketches}. Since filtered colimits commute with finite products, the colimit will be an additive category. It is obvious how to endow the filtered colimit with a translation functor and a class of distinguished triangles. The axioms for the triangulated structure all involve only finitely many objects and morphisms each and therefore are easily seen to hold. The same is true for exactness of the monoidal product.

It remains to check universality. But given a cocone on the diagram there is a unique morphism (apriori not respecting the tt-structure) from the filtered colimit. Hence all one needs to know is that it actually does respect the tt-structure. Again, in each case this only involves finitely many objects and morphisms and is easily seen to hold.
\end{proof}

Let us be given a filtered diagram $(\T_{i},f_{ij}:\T_{i}\to \T_{j})_{i\in I}$ in $\ttCat$ and denote by $\T$ its colimit, and by $f_{i}:\T_{i}\to\T$ the canonical functors.
\begin{pro}\label{spc-continuity}
  The induced map 
  \begin{equation*}
    \varphi:=\varprojlim_{i}f_{i}^{-1}:\spc(\T)\to\varprojlim_{i}\spc(\T_{i})
  \end{equation*}
  is a homeomorphism.
\end{pro}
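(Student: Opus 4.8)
The plan is to prove that $\varphi$ is a homeomorphism by verifying it is a continuous bijection which is moreover a closed map (equivalently, using the fact that both sides are spectral spaces and $\varphi$ is a spectral map, it suffices to check bijectivity together with lifting of specializations, but constructing an explicit inverse is cleaner here). Continuity of $\varphi$ is automatic since each $f_i^{-1}=\spc(f_i)$ is continuous and the target has the limit topology. The heart of the argument is to produce an inverse map on points and to control the topology.

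First I would describe points of the limit. A point of $\varprojlim_i\spc(\T_i)$ is a compatible family $(\mathfrak{P}_i)_{i\in I}$ of primes $\mathfrak{P}_i\subset\T_i$ with $f_{ij}^{-1}(\mathfrak{P}_j)=\mathfrak{P}_i$ for all $i\to j$. Given such a family, I would set $\mathfrak{P}:=\bigcup_i f_i(\mathfrak{P}_i)\subseteq\T$, or more precisely the set of objects of $\T$ that are isomorphic to some $f_i(a)$ with $a\in\mathfrak{P}_i$. Using \cref{ttcat-filtered-colimits} — which tells us every object, morphism and triangle in $\T$ comes from some stage $\T_i$, and any finite diagram of relations witnessing thickness, triangulation or the ideal property can be lifted to a common stage — I would check that $\mathfrak{P}$ is a prime tt-ideal of $\T$: closure under the tensor ideal property and under cones/direct summands each involves only finitely many objects and morphisms, hence is detected at some finite stage where it holds by hypothesis on $\mathfrak{P}_i$; primality $a\otimes b\in\mathfrak{P}\Rightarrow a\in\mathfrak{P}$ or $b\in\mathfrak{P}$ likewise descends to a stage $i$ where $a,b$ and the relation $a\otimes b\in\mathfrak{P}_i$ all live. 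One must also check $\mathfrak{P}\ne\T$, which follows because $\one\notin\mathfrak{P}_i$ at every stage and $f_i(\one)=\one$. Then $f_i^{-1}(\mathfrak{P})=\mathfrak{P}_i$ by the compatibility of the family, so $\varphi(\mathfrak{P})=(\mathfrak{P}_i)_i$, giving surjectivity; and injectivity follows because any prime $\mathfrak{Q}\subset\T$ is recovered as $\bigcup_i f_i(f_i^{-1}\mathfrak{Q})$, again since every object of $\T$ is hit by some $f_i$.

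It remains to see $\varphi$ is a homeomorphism and not merely a bijection. Here I would use that $\spc$ of a small tt-category is a spectral space and maps $\spc(f_i)$ are spectral, so $\varprojlim_i\spc(\T_i)$ is spectral and $\varphi$ is a spectral bijection; a spectral bijection between spectral spaces is a homeomorphism (e.g. \cite[8.16]{hochster:spectral} again, since a continuous bijection of spectral spaces that lifts specializations — which any bijection inverse to a spectral map does — is a homeomorphism). Alternatively, and perhaps more transparently, I would check directly that $\varphi$ is closed: the closed subsets of $\spc(\T)$ are the supports $\supp(S)$ of families $S$ of objects, closed subsets of the limit are generated by preimages $f_i^{-1}$ of supports, and one matches $\varphi(\supp(a))$ for $a=f_i(a_i)$ with the basic closed set pulled back from stage $i$, using $\supp(f_i(a_i))=f_i^{-1}$ applied appropriately — i.e.\ that support is natural in tt-functors. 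Both routes reduce to bookkeeping once the bijection is in hand.

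The main obstacle I anticipate is the well-definedness and primality of the candidate prime $\mathfrak{P}=\bigcup_i f_i(\mathfrak{P}_i)$: one must be careful that the union is directed (so that it is genuinely a subcategory closed under the operations, rather than just a union of subcategories), which uses that $I$ is filtered and the family is compatible, and one must handle objects of $\T$ that are only \emph{isomorphic} to something from a finite stage. None of this is deep, but it is the step where the filtered-colimit combinatorics of \cref{ttcat-filtered-colimits} genuinely gets used, and it is easy to be sloppy about at which stage a given finite configuration of objects and morphisms becomes available. Everything else — continuity, the spectral-space formalities, and identifying the topology — is routine.
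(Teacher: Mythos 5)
Your approach is essentially the paper's: the paper proves a more general statement (\cref{spc-continuity-variant}, for a filtered pseudo-functor with a transformation that is surjective on morphisms and detects isomorphisms) and deduces \cref{spc-continuity} from it, but the actual argument there is exactly yours --- the candidate prime $\mathfrak{P}=\{a\mid a\cong f_{i}(a_{i})\text{ for some }i\text{ and }a_{i}\in\mathfrak{P}_{i}\}$, primality checked by lifting finite configurations to a stage, and the topology controlled by matching basic opens. Two cautions. First, the equality $f_{i}^{-1}(\mathfrak{P})=\mathfrak{P}_{i}$ is not just ``compatibility of the family'': the inclusion $\supseteq$ is immediate, but for $\subseteq$ one must know that an isomorphism $f_{i}(a_{i})\cong f_{j}(a_{j})$ in $\T$ is already realized at some finite stage after pushing to a common $k$; this is the paper's identification $\mathfrak{P}=\mathfrak{P}'$ and is where ``detects isomorphisms'' (here, a property of filtered colimits of categories) is genuinely used --- you flag the issue but should carry it out. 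Second, your route (a) rests on the claim that a spectral bijection between spectral spaces is a homeomorphism, which is false: the identity from the two-point discrete space to the Sierpi\'nski space is a continuous spectral bijection but not a homeomorphism, and the parenthetical ``which any bijection inverse to a spectral map does'' does not supply the missing lifting of specializations. So if you take route (a) you must verify specialization lifting separately (which does follow from the explicit description of $\mathfrak{P}$, since inclusions of primes correspond to componentwise inclusions). Your route (b), checking $\varphi(U(a))=\pi_{i}^{-1}(U(a_{i}))$ for $a\cong f_{i}(a_{i})$, is exactly what the paper does and is the cleaner way to finish.
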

\begin{proof}
  This follows from \cref{spc-continuity-variant}.
\end{proof}

\begin{rmk}\label{filtered-pseudo-colimit}
  In practice, of course, tt-categories and tt-functors are rarely strict, and (filtered) diagrams of such things are rarely strictly functorial. Denote by $\ttCatp$ the 2-category of small tt-categories, tt-functors, and tt-isotransformations without any strictness assumptions.

Given a pseudo-functor $F:I\to\ttCatp$, where $I$ is a small filtered category, we are going to endow its pseudo-colimit 2-$\varinjlim_{I}F$ with the structure of a tt-category. For this, choose a strictification of $F$, \ie a strict 2-functor $G:I\to\Cat$ together with a pseudo-natural equivalence $\eta:F\to G$ (as pseudo-functors~$I\to\Cat$). Then use $\eta$ pointwise to endow each category $G(i)$, where $i\in I$, with the structure of a tt-category, and each functor $G(\alpha)$, where $\alpha:i\to j$, with the structure of a tt-functor. In other words, make $\eta$ into a pseudo-natural equivalence of pseudo-functors~$I\to\ttCatp$. Since 2-$\varinjlim F\simeq$ 2-$\varinjlim G$, we may assume without loss of generality that $F$ is a strict 2-functor. But in this case the canonical functor 2-$\varinjlim_{I}F\to\varinjlim_{I}F$ from the pseudo-colimit to the (1-categorical) colimit is an equivalence (here we use the assumption that $I$ is filtered~\cite[see][VI.6.8]{sga4}). Then we can apply \cref{ttcat-filtered-colimits}.\footnote{This is maybe not wholly satisfactory. In analogy to \cref{ttcat-filtered-colimits} one might expect the statement that $\ttCatp\to\Catp$ creates filtered pseudo-colimits. We won't need this at present, and leave it as a question for the interested reader.}

  \cref{spc-continuity} also holds in this non-strict context. Notice first that non-strict tt-functors induce maps on spectra exactly in the same way as strict ones. Moreover, isomorphic (non-strict) tt-functors induce the same map. Therefore the statement of \cref{spc-continuity} makes sense also for pseudo-functors~$I\to\ttCatp$. It is clear that $F\to \text{2-}\varinjlim F$ satisfies the assumptions of \cref{spc-continuity-variant} below, thus a homeomorphism
  \begin{equation*}
    \spc(\text{2-}\varinjlim F)\xrightarrow{\sim}\varprojlim_{i}\spc(F(i)).
  \end{equation*}
\end{rmk}

In order to generalize \cref{spc-continuity} we now abstract the pertinent properties of the relation between the system $(\T_{i},f_{ij})$ and the ``limit'' $\T$.
\begin{dfn}
  Let $\T_{\bullet}:I\to\ttCatp$ be a pseudo-functor and
  $f:\T_{\bullet}\to \T$ a pseudo-natural transformation,~$\T\in\ttCatp$. We say that
  \begin{itemize}
  \item $f$ is \emph{surjective on morphisms} if for each
    morphism $\alpha:a\to b$ in $\T$ there exists $i\in I$, and
    a morphism $\alpha_{i}:a_{i}\to b_{i}$ in $\T_{i}$ such that~$f_{i}(\alpha_{i})\cong \alpha$.
  \item $f$ \emph{detects isomorphisms} if for each
    $a_{i},b_{i} \in \T_{i}$ such that
    $f_{i}(a_{i})\cong f_{i}(b_{i})$ in $\T$ there exists $u:i\to j$
    such that~$\T_{u}(a_{i})\cong \T_{u}(b_{i})$.
  \end{itemize}
\end{dfn}
The condition $f_{i}(\alpha_{i})\cong \alpha$ here means that there are isomorphisms $a\cong f_{i}(a_{i})$ and $b\cong f_{i}(b_{i})$ such that
\begin{equation*}
  \xymatrix{a\ar[r]^{\alpha}\ar@{-}[d]_{\sim}&b\ar@{-}[d]^{\sim}\\
    f_{i}(a_{i})\ar[r]_{f_{i}(\alpha_{i})}&f_{i}(b_{i})}
\end{equation*}
commutes. The transformation $f$ being surjective on morphisms implies in particular that $f$ is ``surjective on objects'' and even ``surjective on triangles'', in an obvious sense. Note also that detecting isomorphisms is equivalent to detecting zero objects.

In the following result a category $I$ is said to be \emph{conjoining} if
\begin{itemize}
\item $I$ is non-empty, and
\item for any $i,j\in I$ there exists $k\in I$ and $i\to k$,~$j\to k$.
\end{itemize}
In contrast to a filtered category, it is not necessary that parallel morphisms can be equalized. (Of course, in applications $I$ will often just be a directed poset.)

\begin{pro}\label{spc-continuity-variant}
  Let $\T_{\bullet}:I\to\ttCatp$ be a pseudo-functor with $I$ conjoining, and $f:\T_{\bullet}\to \T$ a pseudo-natural transformation,~$\T\in\ttCatp$. Assume that $f$ is surjective on morphisms and detects isomorphisms. Then the induced map 
  \begin{equation*}
    \varphi:=\varprojlim_{i}f_{i}^{-1}:\spc(\T)\to\varprojlim_{i}\spc(\T_{i})
  \end{equation*}
  is a homeomorphism.
\end{pro}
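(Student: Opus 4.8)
The plan is to exhibit $\varphi$ as a continuous open bijection. Continuity and well\nobreakdash-definedness are formal: each $f_i^{-1}\colon\spc(\T)\to\spc(\T_i)$ is the continuous map on spectra induced by the tt-functor $f_i$, and for $u\colon i\to j$ the relation $f_j\circ\T_u\cong f_i$ (together with the fact that isomorphic tt-functors induce the same map on spectra, \cf\cref{filtered-pseudo-colimit}) shows that $(f_i^{-1}(\mathfrak{P}))_i$ is a compatible family for every $\mathfrak{P}\in\spc(\T)$; hence $\varphi$ lands in $\varprojlim_i\spc(\T_i)$ and is continuous for the limit topology. Injectivity uses only surjectivity on objects: if $f_i^{-1}(\mathfrak{P})=f_i^{-1}(\mathfrak{Q})$ for all $i$ and $a\in\mathfrak{P}$, write $a\cong f_i(a_i)$ with $a_i\in\T_i$; then $a_i\in f_i^{-1}(\mathfrak{P})=f_i^{-1}(\mathfrak{Q})$, so $a\cong f_i(a_i)\in\mathfrak{Q}$, and by symmetry $\mathfrak{P}=\mathfrak{Q}$.

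The substance of the proof is surjectivity. Given a compatible family $(\mathcal{P}_i)_{i\in I}$ of primes, the natural candidate for a preimage is
\[
  \mathcal{P}:=\{a\in\T\mid \exists\,i\in I,\ \exists\,a_i\in\mathcal{P}_i,\ f_i(a_i)\cong a\},
\]
a full subcategory closed under isomorphism. The first thing to establish---and the technical core---is the identity $f_i^{-1}(\mathcal{P})=\mathcal{P}_i$ for all $i$. The inclusion ``$\supseteq$'' is immediate. For ``$\subseteq$'', given $a_i\in\T_i$ with $f_i(a_i)\in\mathcal{P}$, choose $j$ and $b_j\in\mathcal{P}_j$ with $f_j(b_j)\cong f_i(a_i)$; using that $I$ is filtered, transport $a_i$ and $b_j$ to a common index $k$, noting that compatibility of the family puts the image of $b_j$ in $\mathcal{P}_k$ and that the two transported objects become isomorphic after applying $f_k$. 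Now invoke that $f$ detects isomorphisms to find $w\colon k\to l$ along which the two transported objects become isomorphic in $\T_l$; since $\mathcal{P}_l$ is thick and contains the image of $b_j$, it also contains the image of $a_i$, and compatibility of the family then forces $a_i\in\mathcal{P}_i$. In particular, once this identity is known, membership $a\in\mathcal{P}$ can be tested at \emph{any} realization $a\cong f_l(a_l)$, as $a\in\mathcal{P}$ if and only if $a_l\in f_l^{-1}(\mathcal{P})=\mathcal{P}_l$.

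With this identity available, checking that $\mathcal{P}$ is a prime ideal of $\T$ is largely formal. Closure under shifts, direct summands, cones of morphisms, and tensoring with arbitrary objects all follow a similar pattern: lift the finitely many objects and morphisms involved to a common $\T_m$ using surjectivity on morphisms (and, for direct summands, detecting isomorphisms in order to match up the decomposition after going far enough along the system), apply the corresponding property of the tt-ideal $\mathcal{P}_m$ together with exactness and monoidality of $f_m$, and descend along $f_m$ using $f_m^{-1}(\mathcal{P})=\mathcal{P}_m$. Properness: $\one_\T\in\mathcal{P}$ would, via detecting isomorphisms, give $\one_{\T_j}\in\mathcal{P}_j$ for some $j$, contradicting primeness of $\mathcal{P}_j$. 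Primeness: if $a\otimes b\in\mathcal{P}$, lift $a$, $b$ and a witness for $a\otimes b$ to a common $\T_l$, use detecting isomorphisms to identify a transport of $a_l\otimes b_l$ with an object of $\mathcal{P}_m$, and conclude from primeness of $\mathcal{P}_m$ (and $\mathcal{P}_m=f_m^{-1}(\mathcal{P})$) that $a\in\mathcal{P}$ or $b\in\mathcal{P}$. Thus $\mathcal{P}\in\spc(\T)$, and the key identity says exactly that $\varphi(\mathcal{P})=(\mathcal{P}_i)_i$, so $\varphi$ is bijective.

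It remains to see that $\varphi$ is open, for which it suffices to treat the basic opens $U(a)=\{\mathcal{Q}\in\spc(\T)\mid a\in\mathcal{Q}\}$, $a\in\T$. Choosing $i$ and $a_i\in\T_i$ with $f_i(a_i)\cong a$, we get $U(a)=\varphi^{-1}(\pi_i^{-1}(U(a_i)))$, where $\pi_i\colon\varprojlim_j\spc(\T_j)\to\spc(\T_i)$ is the projection; since $\varphi$ is bijective, $\varphi(U(a))=\pi_i^{-1}(U(a_i))$ is open. Hence $\varphi$ is a homeomorphism. I expect the main obstacle to be the bookkeeping behind the identity $f_i^{-1}(\mathcal{P})=\mathcal{P}_i$ and the primeness of $\mathcal{P}$: both reduce to a short diagram chase through the filtered system in which surjectivity on morphisms produces representatives and detecting isomorphisms forces competing representatives to agree far enough along the system. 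Non-strictness of the tt-categories and functors causes no difficulty, since every step is phrased up to isomorphism (\cf\cref{filtered-pseudo-colimit}).
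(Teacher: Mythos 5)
Your proposal is correct and follows essentially the same route as the paper's proof: the same candidate prime $\mathcal{P}$ built from objects realizable in some $\mathcal{P}_i$, the same key identity (the paper phrases it as $\mathfrak{P}=\mathfrak{P}'$, you as $f_i^{-1}(\mathcal{P})=\mathcal{P}_i$, which is the same independence-of-realization statement proved by the same transport-to-a-common-index-and-detect-isomorphisms chase), and the same verification of openness via $\varphi(U(a))=\pi_i^{-1}(U(a_i))$.
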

\begin{proof}
  \begin{enumerate}
  \item We first prove injectivity. Let $\mathfrak{P}\neq\mathfrak{Q}\in\spc(\T)$, say~$a\in\mathfrak{P}\backslash\mathfrak{Q}$. There exists $i\in I$ and $a_{i}\in \T_{i}$ such that $f_{i}(a_{i})\cong a$ since $f$ is surjective on objects. But then $a_{i}\in f_{i}^{-1}(\mathfrak{P})\backslash f_{i}^{-1}(\mathfrak{Q})$ which implies~$\varphi(\mathfrak{P})\neq\varphi(\mathfrak{Q})$.
  \item For surjectivity, let~$(\mathfrak{P}_{i})_{i}\in\varprojlim\spc(\T_{i})$. Define
    \begin{equation*}
      \mathfrak{P}=\{a\in\T\mid \exists i\in I, a_{i}\in\mathfrak{P}_{i}:a\cong f_{i}(a_{i})\}\subset \T.
    \end{equation*}
    We claim that $\mathfrak{P}$ can also be described as
    \begin{equation*}
      \mathfrak{P}'=\{a\in\T\mid \forall i\in I, a_{i}\in\T_{i}:a\cong f_{i}(a_{i})\Rightarrow a_{i}\in\mathfrak{P}_{i}\}.
    \end{equation*}
    Indeed, if $a\in\mathfrak{P}'$ choose $i\in I$ and $a_{i}\in\T_{i}$ such that $a\cong f_{i}(a_{i})$ which is possible since $f$ is surjective on objects. By definition of $\mathfrak{P}'$ we must have $a_{i}\in\mathfrak{P}_{i}$, and therefore~$a\in\mathfrak{P}$. Conversely, if $a\in\mathfrak{P}$, say $a\cong f_{i}(a_{i})$ with $a_{i}\in\mathfrak{P}_{i}$, and we are given $a_{j}'\in\T_{j}$ such that $a\cong f_{j}(a_{j}')$, let $k\in I$ and $u_{i}:i\to k$,~$u_{j}:j\to k$. We have $f_{k}\T_{u_{i}}(a_{i})\cong f_{i}(a_{i})\cong a\cong f_{j}(a_{j}')\cong f_{k}\T_{u_{j}}(a_{j}')$ and so by assumption on $f$ there exists $u:k\to l$ such that $\T_{uu_{i}}(a_{i})\cong \T_{u}\T_{u_{i}}(a_{i})\cong \T_{u}\T_{u_{j}}(a_{j}')\cong \T_{uu_{j}}(a_{j}')$. The former lies in $\mathfrak{P}_{l}$ hence so does the latter, and this implies~$a_{j}'\in\mathfrak{P}_{j}$.

It is now straightforward to prove that $\mathfrak{P}$ is a prime ideal. For example, let $D:a\to b\to c\to^{+}$ be a triangle in $\T$ with~$a,b\in\mathfrak{P}$. By assumption there exists $i\in I$ and a triangle $D_{i}:a_{i}\to b_{i}\to c_{i}\to^{+}$ in $\T_{i}$ such that~$f_{i}(D_{i})\cong D$. By what we just proved we must then have $a_{i},b_{i}\in\mathfrak{P}_{i}$ and hence also~$c_{i}\in\mathfrak{P}_{i}$. But then~$c\cong f_{i}(c_{i})\in\mathfrak{P}$. Since $\mathfrak{P}$ is clearly closed under translations, this shows that it is a triangulated subcategory.

For thickness we proceed similarly. Let $a,b\in\T$ such that~$a\oplus b\in\mathfrak{P}$. We may find $i\in I$ and $a_{i},b_{i}\in\T_{i}$ such that $a\cong f_{i}(a_{i})$,~$b\cong f_{i}(b_{i})$. Then $f_{i}(a_{i}\oplus b_{i})\cong a\oplus b\in\mathfrak{P}$ thus $a_{i}\oplus b_{i}\in\mathfrak{P}_{i}$ and this implies $a_{i}\in\mathfrak{P}_{i}$ or $b_{i}\in\mathfrak{P}_{i}$, \ie $a\in\mathfrak{P}$ or~$b\in\mathfrak{P}$. Primality is proven in exactly the same way as thickness.

Let $\pi_{i}:\varprojlim\spc(\T_{i})\to \spc(\T_{i})$ be the canonical projection so that~$\pi_{i}\varphi=f_{i}^{-1}$. Then
\begin{align*}
  \pi_{i}\varphi(\mathfrak{P})=f_{i}^{-1}(\mathfrak{P})=f_{i}^{-1}(\mathfrak{P}')=\mathfrak{P}_{i}
\end{align*}
and this completes the proof of surjectivity.
  \item Since $\varphi$ is continuous, it remains to show that it is open. A basis for the topology of $\spc(\T)$ is given by $U(a)=\spc(\T)\backslash\supp(a)$, where $a$ runs through the objects of~$\T$. Fix $a\in\T$, say $a\cong f_{i}(a_{i})$ with some~$a_{i}\in\T_{i}$. We claim that $\varphi(U(a))=\pi_{i}^{-1}(U(a_{i}))$ (which is open hence this would complete the proof).
   
Let $\mathfrak{P}\in U(a)$, which means $f_{i}(a_{i})\cong a\in\mathfrak{P}$, or equivalently, $a_{i}\in f_{i}^{-1}(\mathfrak{P})=\pi_{i}\varphi(\mathfrak{P})$, \ie $\varphi(\mathfrak{P})\in\pi_{i}^{-1}(U(a_{i}))$. Conversely, suppose $(\mathfrak{P}_{i})_{i}\in\pi_{i}^{-1}(U(a_{i}))$, \ie~$a_{i}\in\mathfrak{P}_{i}$. By the proof of surjectivity in part (2),  $(\mathfrak{P}_{i})_{i}=\varphi(\mathfrak{P})$ with $a\in\mathfrak{P}$, \ie~$(\mathfrak{P}_{i})_{i}\in\varphi(U(a))$.
  \end{enumerate}
\end{proof}

\begin{rmk}
  Certainly, these are not the only reasonable conditions on $f$ which allow to deduce a homeomorphism on spectra. For example, it is likely that surjectivity on morphisms could be replaced by a nilfaithfulness assumption inspired by~\cite{balmer:surjectivity}. We mainly chose these conditions with easy applicability in mind.
\end{rmk}

We may apply this result to filtered modules, thereby concluding the second proof of \cref{main-thm}.
\begin{cor}\label{noetherian-reduction}
  If $\rho_{R}^{\bullet}:\spc(\fper(R))\to\spch(R[\beta])$ is a homeomorphism for noetherian rings then it is a homeomorphism for all rings.
\end{cor}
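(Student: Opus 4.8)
The plan is to write an arbitrary ring $R$ as the filtered colimit $R=\varinjlim_{i}R_{i}$ of its finitely generated $\Z$-subalgebras, each of which is noetherian by Hilbert's basis theorem, and then to deduce the statement for $R$ by applying the continuity results of this section to \emph{both} the source and the target of $\rho^{\bullet}_{R}$, exploiting that $\rho^{\bullet}_{(-)}$ is natural in the ring. Concretely, base change along $R_{i}\to R_{j}$ gives tt-functors $\otimes_{R_{i}}R_{j}\colon\fper(R_{i})\to\fper(R_{j})$, which assemble into a filtered pseudo-functor $\fper(R_{\bullet})\colon I\to\ttCatp$ together with the pseudo-natural transformation $f\colon\fper(R_{\bullet})\to\fper(R)$ given by $\otimes_{R_{i}}R$.

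First I would verify, using the identification $\fper(S)\simeq\K{\fmd(S)}^{b}$ of \cref{fper-homotopy-derived}, that $f$ is surjective on morphisms and detects isomorphisms in the sense of the definition preceding \cref{spc-continuity-variant}. Indeed, a split finite projective filtered $R$-module is a finite sum $\oplus_{n}M_{n}(n)$ with $\oplus_{n}M_{n}$ the image of an idempotent matrix over $R$, hence already defined over some $R_{i}$; a bounded complex of such, a chain map, a null-homotopy, and finally an isomorphism in $\K{\fmd(R)}^{b}$ each involve only finitely many entries and relations, so they descend to a finite stage of the diagram, and two objects that become isomorphic over $R$ already become isomorphic over some $R_{j}$. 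Granting this, \cref{spc-continuity-variant} — in the non-strict formulation explained in \cref{filtered-pseudo-colimit} — yields a homeomorphism $\spc(\fper(R))\xrightarrow{\sim}\varprojlim_{i}\spc(\fper(R_{i}))$.

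On the target side, $R[\beta]=\varinjlim_{i}R_{i}[\beta]$ is again a filtered colimit of rings, and the canonical map $\spch(R[\beta])\to\varprojlim_{i}\spch(R_{i}[\beta])$ is a homeomorphism: this is the graded analogue of the classical continuity of the Zariski spectrum, and can also be seen directly, since as a topological space $\spch(S[\beta])$ is glued from the closed subset $Z(\beta)\cong\spc(S)$ and the open subset $U(\beta)\cong\spc(S)$ in a way visibly compatible with filtered colimits in $S$, reducing the claim to the continuity of $\spc(-)$. Now $\rho^{\bullet}_{(-)}$ is natural with respect to the base-change tt-functors (these send $R_{i}(1)$ to $R(1)$ and induce the evident map $R_{i}[\beta]\to R[\beta]$ on graded central rings), so the two homeomorphisms above identify $\rho^{\bullet}_{R}$ with $\varprojlim_{i}\rho^{\bullet}_{R_{i}}$. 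An inverse limit of a compatible family of homeomorphisms is a homeomorphism; hence if every $\rho^{\bullet}_{R_{i}}$ is a homeomorphism — which holds by hypothesis, the $R_{i}$ being noetherian — then so is $\rho^{\bullet}_{R}$, and by \cite[6.11]{balmer:sss} it is in fact an isomorphism of locally ringed spaces.

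The main obstacle is the bookkeeping in the second paragraph: checking carefully that $f$ is surjective on morphisms and detects isomorphisms, i.e.\ that all the data defining objects, morphisms, homotopies and isomorphisms in $\K{\fmd(R)}^{b}$ are ``finitely presented over $R$'' and therefore defined at a finite stage of the diagram. This is conceptually routine but is where the real content lies; everything else is a direct application of the general machinery already established in this section.
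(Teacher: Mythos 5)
Your proposal is correct and follows essentially the same route as the paper: write $R$ as the filtered colimit of its finitely generated (hence noetherian) subrings, check that the base-change transformation is surjective on morphisms and detects isomorphisms by descending matrices, idempotents, complexes and null-homotopies to a finite stage, and apply \cref{spc-continuity-variant} to both sides of the commutative square relating $\rho^{\bullet}_{R}$ to the $\rho^{\bullet}_{R_{i}}$. Your extra justification that $\spch(R[\beta])\to\varprojlim_{i}\spch(R_{i}[\beta])$ is a homeomorphism is a detail the paper dismisses as clear, but it is the same argument.
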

\begin{proof}
  Let $R$ be an arbitrary ring and write it as the filtered colimit of its finitely generated subrings~$R=\varinjlim_{i}R_{i}$. An inclusion $R_{i}\subset R_{j}$ induces a basechange tt-functor $\otimes_{R_{i}}R_{j}:\fper(R_{i})\to\fper(R_{j})$ and we obtain a pseudo-functor $\fper(R_{\bullet}):I\to\ttCatp$ together with a pseudo-natural transformation~$f=\otimes R:\fper(R_{\bullet})\to\fper(R)$. Let us check that $f$ satisfies the assumptions of \cref{spc-continuity-variant}.

  Note first that every free $R$-module comes from a free $R_{i}$-module by basechange, for any~$i$. Also, a morphism between finitely generated free $R$-modules is described by a matrix with entries in~$R$. Adding these finitely many entries to $R_{i}$ we see that morphisms also come from some~$R_{i}$. In particular, this is true for idempotent endomorphisms of finitely generated free $R$-modules. We deduce that finitely generated projective $R$-modules also arise by basechange from some~$R_{i}$. The same is then true for objects and morphisms in $\fmd(R)$ and therefore also in $\fper(R)=\K{\fmd(R)}^{b}$ (\cref{fper-homotopy-derived}). In other words, $f$ is surjective on morphisms. Moreover, a perfect filtered complex is 0 in $\fper(R)$ if and only if it is nullhomotopic and such a homotopy again comes from some $R_{i}$. We conclude that $f$ detects isomorphisms as well.

We may therefore apply \cref{spc-continuity-variant} to deduce a commutative square
\begin{equation*}
  \xymatrix{\spc(\fper(R))\ar[r]\ar[d]_{\rho_{R}^{\bullet}}&\varprojlim_{i}\spc(\fper(R_{i}))\ar[d]^{(\rho_{R_{i}}^{\bullet})_{i}}\\
    \spch(R[\beta])\ar[r]&\varprojlim_{i}\spch(R_{i}[\beta])}
\end{equation*}
where the top horizontal map is a homeomorphism. Since the $R_{i}$ are all noetherian rings, the right vertical map is a homeomorphism by assumption. And the bottom horizontal map is clearly a homeomorphism. We conclude that the left vertical map is too.
\end{proof}

\bibliographystyle{hsiam}
\bibliography{tt-fmod}

\end{document}